\renewcommand{\a }{\alpha }
\renewcommand{\d}{\delta }
\newcommand{\e }{\epsilon }
\newcommand{\rh }{\rho }
\renewcommand{\th }{\theta }
\newcommand{\intbar}{\mathop{\int\makebox(-13.5,0){\rule[4pt]{.7em}{0.3pt}}%
\kern-6pt}\nolimits}
\newcommand{\be}{\begin{equation}}
\newcommand{\ee}{\end{equation}}
\newenvironment{pfnb}{\noindent{\bf Proof}}{\rule{2mm}{2mm}\medskip}
\newcommand{\R}{\mathbb{R}}
\newcommand{\Rn}{\mathbb{R}^n}
\newcommand{\Rtre}{\mathbb{R}^3}
\newcommand{\N}{\mathbb{N}}
\begin{document}

\author{Andrea MONDINO$^{1}$}

\date{}

\title{The conformal Willmore Functional: a perturbative approach}

\newtheorem{lem}{Lemma}[section]
\newtheorem{pro}[lem]{Proposition}
\newtheorem{thm}[lem]{Theorem}
\newtheorem{rem}[lem]{Remark}
\newtheorem{cor}[lem]{Corollary}
\newtheorem{df}[lem]{Definition}
\newtheorem*{Theorem}{Theorem}
\newtheorem*{Lemma}{Lemma}
\newtheorem*{Proposition}{Proposition}
\newtheorem*{claim}{Claim}

\maketitle

\begin{center}

{\small SISSA, via Beirut 2-4, 34014 Trieste, Italy.}

\

\end{center}

\footnotetext[1]{E-mail address: mondino@sissa.it}

\

\

\begin{center}
\noindent {\sc abstract}. The conformal Willmore functional (which is conformal invariant in general Riemannian manifold $(M,g)$) is studied with a perturbative method: the Lyapunov-Schmidt reduction. Existence of critical points is shown in ambient manifolds $(\Rtre, g_\e)$ -where $g_\e$ is a metric close and asymptotic to the euclidean one. With the same technique a non existence result is proved in general Riemannian manifolds $(M,g)$ of dimension three. 
\bigskip\bigskip

\noindent{\it Key Words:} 
Willmore functional, conformal geometry, perturbative method, mean curvature, nonlinear elliptic PDE. 

\bigskip

\centerline{\bf AMS subject classification: }
53C21, 53C42, 58E99, 35J60.
\end{center}

\section{Introduction}\label{s:in}

The aim of this paper is to study a (Riemannian) conformally invariant Willmore functional. The study of Conformal Geometry was started by H. Weil and E. Cartan in the beginning of the XX century and since its foundation it has been playing ever more a central role in Riemannian Geometry; its task is to analyze how geometric quantities change under conformal transformations (i.e. diffeomorphisms which preserves angles) and possibly find out conformal invariants (i.e. quantities which remain unchanged under conformal transformations). 

Let us first recall the definition of ``standard'' Willmore functional for immersions in $\Rtre$ which is a topic of great interest in the contemporary research (see for instance \cite{BK}, \cite{KS} and \cite{Riv}). Given a compact orientable Riemannian surface $(\mathring{M},\mathring{g})$ isometrically immersed in $\Rtre$ endowed with euclidean metric, the ``standard'' Willmore functional of $\mathring{M}$ is defined as
\begin{equation}\label{W(M)}
W(\mathring{M})=\int_{\mathring{M}} \frac{H^2}{4} d\Sigma
\end{equation}
where $H$ is the mean curvature and $d\Sigma$ is the area form of $(\mathring{M},\mathring{g})$ (we will always adopt the convention that $H$ is the sum of the principal curvatures: $H:=k_1+k_2$). 
\\This functional satisfies two crucial properties:

a) $W$ is invariant under conformal transformations of $\Rtre$; that is, given $\Psi:\Rtre \to \Rtre$ a conformal transformation, $W(\Psi(\mathring{M}))=W(\mathring{M})$ (Blaschke 1929-White 1973).

b) $W$ attains its strict global minimum on the standard spheres $S_p^\rh$ of $\Rtre$ (hence they form a critical manifold - i.e. a manifold made of critical points):
\begin{equation}\label{TeoWillmin}
W(\mathring{M}):=\int_{\mathring{M}} \frac{H^2}{4} d\Sigma \geq  4 \pi; \quad W(\mathring{M})=4 \pi \Leftrightarrow \mathring{M}=S_p^\rho.
\end{equation} 
The proofs of the last facts can be found in \cite{Will} (pag. 271 and pag. 276-279).

Clearly the ``standard'' Willmore functional $W$ can be defined in the same way for compact oriented surfaces immersed in a \emph{general Riemannian manifold} $(M,g)$ of dimension three. Although this functional has several interesting applications (see for instance the Introduction of \cite{Mon}), it turns out that $W$ is not conformally invariant. 

As proved by Bang-Yen Chen in \cite{Chen} (see also \cite{Wei} and for higher dimensional and codimensional analogues \cite{PW} ), the ``correct'' Willmore functional from the conformal point of view is defined as follows.
Given a compact orientable Riemannian surface $(\mathring{M},\mathring{g})$ isometrically immersed in the three dimensional Riemannian manifold $(M,g)$, the \emph{conformal Willmore functional} of $\mathring{M}$ is 
\begin{equation}
I(\mathring{M}):= \int_{\mathring{M}} \bigg(\frac{H^2}{4}-D \bigg) d\Sigma
\end{equation}
where $D:=k_1k_2$ is the product of the principal curvatures and as before $H$ and $d\Sigma$ are respectively the mean curvature and the area form of $(\mathring{M},\mathring{g})$. In the aforementioned papers it is proved that $I$ is conformally invariant (i.e. given $\Psi:(M,g) \to (M,g)$ a conformal transformation, $I(\Psi(\mathring{M}))=I(\mathring{M})$) so in this sense it is the ``correct'' generalization of the standard Willmore functional which, as pointed out, is conformally invariant in $\Rtre$. We say that $I$ generalizes $W$ because if $\Rtre$ is taken  as  ambient manifold, the quantity $D=k_1k_2$ is nothing but the Gaussian curvature which, fixed the topology of the immersed surface, gives a constant when integrated (by the Gauss-Bonnet Theorem) hence it does not influence the variational properties of the functional.
\\

A surface which makes the conformal Willmore functional $I$ stationary with respect to normal variations is called \emph{conformal Willmore surface} and it is well known ( the expression of the differential in full generality is stated without proof in \cite{PW} and the computations can be found in \cite{HL}, here we deal with a particular case which will be computed in the proof of Proposition \ref{lem:svilI'} ) that such a surface satisfies the following PDE:
$$ \frac{1}{2}\triangle_{\mathring{M}}H + H\bigg(\frac{H^2}{4}-D\bigg)+\frac{(\lambda_1-\lambda_2)}{2}[R(\mathring{N},e_1,\mathring{N},e_1)-R(\mathring{N},e_2,\mathring{N},e_2)]+\sum_{ij}(\nabla_{e_i}R)(\mathring{N},e_j,e_j,e_i)=0$$
where $\triangle_{\mathring{M}}$ is the Laplace Beltrami operator on $\mathring{M}$, $R$ is the Riemann tensor of the ambient manifold $(M,g)$ (for details see ``notations and conventions''), $\mathring{N}$ is the inward unit normal vector, $\lambda_1$ and $\lambda_2$ are the principal curvatures and   $e_1$, $e_2$ are the normalized principal directions.

The goal of this paper is to study the existence of conformal Willmore surfaces. 
\\The topic has been extensively studied in the last years: in \cite{ZG} the author generalizes the conformal Willmore functional to arbitrary dimension and codimension and studies the existence of critical points  in space forms; in \cite{HL} the authors compute the differential of $I$ in full generality and give examples of conformal Willmore surfaces in the sphere and in complex space forms; other existence results in spheres or in space forms are studied for instance in \cite{GLW}, \cite{LU}, \cite{WG} and \cite{MW}. 
\\

The novelty of this paper is that the conformal Willmore functional is analyzed in an ambient manifold with non constant sectional curvature: we will give existence (resp. non existence) results for curved metrics in $\Rtre$, close and asymptotic to the flat one (resp. in general Riemannian manifolds). More precisely, taken $h_{\mu\nu} \in C^\infty _0 (\Rtre)$ a smooth bilinear form with compact support (as we will remark later it is sufficient that $h_{\mu \nu}$ decreases fast at infinity with its derivatives) we take as ambient manifold
\begin{equation} \label{def:ge}
(\Rtre,g_\e) \quad \text{with} \quad g_\e=\delta+\e h 
\end{equation} 
where $\delta$ is the euclidean scalar product. 

The test surfaces are perturbed standard spheres (resp. perturbed geodesic spheres), let us define them.
Let $S_p^\rh$ be a standard sphere of $\Rtre$ parametrized by
$$\Theta \in S^2 \mapsto p+\rh \Theta$$
and let $w\in C^{4,\a}(S^2)$ be a small function, then the \emph{perturbed standard sphere} $S_p^\rh(w)$ is the surface parametrized as
$$\Theta \in S^2 \mapsto p+\rh (1-w(\Theta)) \Theta.$$ 
Analogously the \emph{perturbed geodesic sphere} $S_{p,\rh}(w)$ is the surface parametrized by 
$$\Theta \in S^2 \mapsto Exp_p [\rh(1-w(\Theta)) \Theta] $$
where $S^2$ is the unit sphere of $T_pM$, $Exp_p$ is the exponential map centered at $p$ and, as before, $w$ is a small function in $C^{4,\a}(S^2)$. 

The main results of this paper are Theorem \ref{teoExist} and Theorem \ref{teoExist2} below, which will be proved in Subsection \ref{subsec:ProofEx}.
Before stating them recall that given a three dimensional Riemannian manifold $(M,g)$, the \emph{traceless Ricci tensor} $S$ is defined as 
\begin{equation} \label{def:S}
S_{\mu\nu}:=R_{\mu\nu}-\frac{1}{3}g_{\mu\nu}R
\end{equation}
where $R_{\mu\nu}$ is the Ricci tensor and $R$ is the scalar curvature. Its squared norm at a point $p$ is defined as $\|S_p\|^2=\sum_{\mu,\nu=1}^3 S_{\mu\nu}(p)^2$ where $S_{\mu\nu}(p)$ is the matrix of $S$ at $p$ in an orthonormal frame. Expanding in $\e$ the curvature tensors (see for example \cite{And-Mal} pages 23-24) it is easy to see that the traceless Ricci tensor corresponding to $(\Rtre,g_\e)$ (defined in \eqref{def:ge} )is
\begin{equation}\label{def:s}
\|S_p\|^2= \e^2 \tilde{s}_p+o(\e^2)
\end{equation}
where $\tilde{s}_p$ is a nonnegative quadratic function in the second derivatives of $h_{\mu\nu}$ which does not depend on $\e$. 
In the following Theorem, $\pi$ will denote an affine plane in $\Rtre$ and  $H^1(\pi)$ will be the Sobolev space of the $L^2$ functions defined on $\pi$ whose  distributional gradient is a vector valued $L^2$ integrable function.  $H^1(\pi)$ is equipped with the norm
$$\|f\|_{H^1(\pi)}^2:=\|f\|_{L^2(\pi)}^2+\|\nabla f\|_{L^2(\pi)}^2 \quad \forall f\in H^1(\pi).$$ 

Now we can state the Theorems.

\begin{thm}\label{teoExist}
Let $h \in C^{\infty}_0(\Rtre)$ be a symmetric bilinear form with compact support and let $c$ be such that 
$$c:=\sup\{\|h_{\mu\nu}\|_{H^1(\pi)}:\pi \text{ is an affine plane in $\Rtre$,  } \mu,\nu=1,2,3\}.$$
Then there exists a constant $A_c>0$  depending on $c$ with the following property: if there exists a point $\bar{p}$ such that 
$$\tilde{s}_{\bar{p}} >A_c  $$
then, for $\e$ small enough, there exists a perturbed standard sphere $S_{p_\e}^{\rh_\e}(w_\e)$ which is a critical point of the conformal Willmore functional $I_\e$ converging to a standard sphere as $\e\to 0$.
\end{thm}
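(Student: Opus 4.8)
The plan is to perform a finite-dimensional Lyapunov--Schmidt reduction, treating $I_\e$ as a perturbation of the flat conformal Willmore functional $I_0$. On $(\Rtre,\delta)$ the Gauss--Bonnet theorem makes $\int D$ a topological constant, so $I_0$ agrees, up to an additive constant, with the standard Willmore functional $W$; in particular $I_0$ vanishes on every round sphere and the $4$-parameter family $Z:=\{S_p^\rh : p\in\Rtre,\ \rh>0\}$ is a manifold of critical points. The first task is to understand the second variation $I_0''$ at a sphere and identify its kernel. Writing normal perturbations as functions $w\in C^{4,\a}(S^2)$ and decomposing in spherical harmonics $Y_\ell$, the linearized Willmore operator acts on $Y_\ell$ as a scalar proportional to $\ell(\ell+1)(\ell+2)(\ell-1)$, which vanishes exactly for $\ell=0,1$ and is strictly positive for $\ell\geq2$. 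Hence $\ker I_0''$ is spanned by the constants and the first harmonics --- precisely the tangent space $T_zZ$ generated by dilations and translations (the extra conformal Jacobi fields are already contained here, since the Möbius orbit of a round sphere stays within $Z$) --- and $I_0''$ is boundedly invertible on the $L^2$-orthogonal complement $(T_zZ)^\perp$. This non-degeneracy transverse to $Z$ is what makes the whole scheme work.

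Next I would solve the auxiliary (infinite-dimensional) equation. Let $P$ denote the projection onto $(T_zZ)^\perp$. For each $z=(p,\rh)\in Z$ I seek $w=w(p,\rh,\e)\in(T_zZ)^\perp$ solving $P\,\n_w I_\e\big(S_p^\rh(w)\big)=0$. Since $\n_w I_\e$ is of size $O(\e)$ on $Z$ while $P\,I_0''$ is invertible by the previous step, the implicit function theorem (equivalently a contraction argument) produces a unique small solution $w=O(\e)$, depending smoothly on $(p,\rh)$. Here the hypotheses on $h$ enter quantitatively: the compact support (or fast decay) of $h$ and the bound $c$ on its $H^1$-norm over affine planes are needed to control the perturbation and the invertibility of the linearized operator \emph{uniformly} in the parameters, including the delicate large-$\rh$ regime where a sphere is locally indistinguishable from a plane.

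The reduction then yields the finite-dimensional functional $\Phi_\e(p,\rh):=I_\e\big(S_p^\rh(w(p,\rh,\e))\big)$, whose critical points correspond, by the standard theory, to genuine critical points of $I_\e$, i.e.\ to conformal Willmore spheres. The heart of the matter is to expand $\Phi_\e$ in $\e$. Because $I_0\equiv0$ on $Z$, the constant term drops out, and the computation (building on the differential of $I$ derived in Proposition \ref{lem:svilI'} and on the curvature expansion \eqref{def:s}) should show that, up to terms controlled by $c$, the decisive part of the reduced functional is a positive multiple of $\tilde{s}_p$, entering at order $\e^2$ through \eqref{def:s}. Establishing this expansion --- controlling the error $w=O(\e)$, checking that the lower-order (linear in $h$) contributions do not by themselves generate the critical point but are absorbed into the $c$-bounded remainder, and extracting the precise sign and $\tilde{s}_p$-dependence of the leading coefficient --- is the step I expect to be the main obstacle.

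Finally, to locate a critical point of $\Phi_\e$ I would run a compactness argument anchored at $\bar p$. The constant $A_c$ is calibrated so that the local signal $\tilde{s}_{\bar p}$ strictly beats the terms bounded by $c$: at $(\bar p,\rh)$ the reduced functional is large and positive, whereas in the degenerate regimes --- $p$ leaving the support of $h$, where $\tilde{s}_p=0$, or $\rh\to0$, or $\rh\to\infty$ --- it is dominated by a quantity below the $A_c$-threshold, so the sphere can neither escape nor collapse. Consequently $\Phi_\e$ attains an interior maximum at some $(p_\e,\rh_\e)$ with $p_\e\to\bar p$ and $\rh_\e$ bounded away from $0$ and $\infty$, producing a critical point. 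The associated surface $S_{p_\e}^{\rh_\e}(w_\e)$, with $w_\e=w(p_\e,\rh_\e,\e)=O(\e)\to0$, is then a critical point of $I_\e$ converging to a standard sphere as $\e\to0$.
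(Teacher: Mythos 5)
Your strategy coincides with the paper's: a Lyapunov--Schmidt reduction around the critical manifold of round spheres, followed by locating an interior maximum of the reduced functional $\Phi_\e(p,\rh)$ by playing a positive signal near $\bar p$ against a $c$-controlled bound in the escape regimes. The skeleton is right, but the two computations that make it work are missing, and one of them is mis-stated. You write that the contributions linear in $h$ (i.e.\ of order $\e$) ``are absorbed into the $c$-bounded remainder''. They cannot be: the remainder you compare against is of size $\e^2$, and an honest $O(\e)$ term on $Z$ would swamp everything. What saves the argument is that $G_1(S_p^\rh)=0$ \emph{identically} on every round sphere (Lemma \ref{lem:G1}), by an exact cancellation between the first-order variations of $\int H^2/4$ and $\int D$; this is a genuine computation, not an absorption, and without it the degenerate expansion $\Phi_\e=\e^2\bigl[G_2-\tfrac12(G_1'\,|\,I_0''^{-1}G_1')\bigr]+o(\e^2)$ of Lemma \ref{lemsvile2} --- on which the whole quantitative comparison rests --- is simply unavailable.

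Second, you never identify where the positive multiple of $\tilde s_p$ actually comes from. In the paper it is produced in the \emph{small-radius} regime: the test surfaces are geodesic spheres of $(\Rtre,g_\e)$ for $\rh<R_1$, glued to standard spheres for large $\rh$ by the cutoff in \eqref{def:Sigma} and Lemma \ref{lemv}, and Proposition \ref{svilPhipr} gives $\Phi(p,\rh)=\frac{\pi}{5}\|S_p\|^2\rh^4+O_p(\rh^5)$, which via \eqref{def:s} becomes $\frac{\pi}{5}\e^2\tilde s_p\rh^4+o(\e^2)$. Note the factor $\rh^4$: the signal degenerates as $\rh\to0$, so $A_c$ must be calibrated against the value at a \emph{fixed} $\bar\rh<R_1$, namely $A_c>\frac{5}{\pi}\frac{C_c+1}{\bar\rh^4}$; saying that ``the local signal $\tilde s_{\bar p}$ strictly beats the terms bounded by $c$'' without fixing that scale leaves the constant undefined. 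These are not stylistic omissions: the small-radius expansion is the bulk of Section \ref{sec:IMg}, and the large-radius bound of Lemma \ref{lem:CR} is where the hypothesis on $\|h_{\mu\nu}\|_{H^1(\pi)}$ is actually used. As it stands the proposal is a correct plan rather than a proof.
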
 

It is well-known  (see Remark \ref{rem:NoEx} point $3$) that if a three dimensional Riemannian Manifold has non constant sectional curvature then the traceless Ricci tensor $S$ cannot vanish everywhere. Clearly $(\Rtre, g_\e)$ has non constant sectional curvature (the metric is asymptotically flat but not flat) hence it cannot happen that $\|S\|^2\equiv 0$; for the following existence result we ask that this non null quantity has non degenerate expansion in $\e$: we assume 
\begin{equation}\label{def:M}
M:=\max _{p\in \Rtre} \tilde{s}_p>0.
\end{equation}
Actually it is a maximum and not only a supremum because the metric is asymptotically flat. 

The following is like a mirror Theorem to the previous existence result: in the former we bounded $c$ and asked $\tilde{s}$ to be large enough at one point, in the latter we assume that $\tilde{s}$ is non null at one point (at least) and we ask $c$ to be small enough.

\begin{thm}\label{teoExist2}
Let $h$, $c$ be as in Theorem \ref{teoExist} and $M$ satisfying \eqref{def:M}. There exists  $\delta_M>0$ depending on $M$ such that if $c<\delta_M$ then, for $\e$ small enough,  there exists a perturbed standard sphere $S_{p_\e} ^{\rh_\e} (w_\e)$ which is a critical point of the conformal Willmore functional $I_\e$ converging to a standard sphere as $\e\to 0$.
\end{thm}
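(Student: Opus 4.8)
The plan is to run a Lyapunov--Schmidt reduction along the four-parameter manifold of standard spheres and then to locate a critical point of the reduced functional by a maximisation argument in which the ``signal'' $\tilde s_p$ is played off against the ``noise'' measured by $c$. In the flat metric $g_0=\d$ every standard sphere satisfies $H=2/\rh$ and $k_1=k_2=1/\rh$, so $H^2/4-D=(k_1-k_2)^2/4\equiv 0$ and $I_0\equiv 0$ on $Z:=\{S^{\rh}_p\}_{(p,\rh)\in\Rtre\times(0,\infty)}$; thus $Z$ is a manifold of critical points of $I_0$. The decisive input is its \emph{non-degeneracy}: the kernel of the second variation of $I_0$ at a sphere is exactly $T_zZ$ (the infinitesimal translations and dilations), so on the $L^2$-orthogonal complement the linearised operator is invertible with uniformly bounded inverse. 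Granting this, the implicit function theorem applied to the projection of $\n I_\e$ onto $(T_zZ)^\perp$ yields, for each $(p,\rh)$ and each small $\e$, a unique small $w_\e(p,\rh)\in C^{4,\a}(S^2)\cap (T_zZ)^\perp$ solving the auxiliary equation, with $w_\e\to 0$ as $\e\to 0$; critical points of the reduced functional $\Phi_\e(p,\rh):=I_\e\big(S^{\rh}_p(w_\e(p,\rh))\big)$ then correspond to genuine conformal Willmore perturbed spheres.

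Next I expand $\Phi_\e$. Since on a round sphere $H^2/4-D=(k_1-k_2)^2/4$ vanishes together with its first $\e$-variation (umbilicity, $k_1=k_2$), the order-$\e$ term vanishes identically and the expansion begins at order $\e^2$ with a sign-definite density $\sim(\dot k_1-\dot k_2)^2$; this matches the nonnegativity of $\tilde s_p$ and its link with $\|S_p\|^2=\e^2\tilde s_p+o(\e^2)$. I therefore expect
\[
\Phi_\e(p,\rh)=\e^2\,G(p,\rh)+o(\e^2),\qquad
G(p,\rh)=a\,\psi(\rh)\,\tilde s_p+E(p,\rh),
\]
where $a>0$ is universal, $\psi(\rh)>0$, the first term is the localised contribution of the traceless Ricci tensor, and $E$ collects the non-local terms. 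The point of the norm defining $c$ is that these are integrals of $h$ and its first derivatives over $S^\rh_p$: as $\rh\to\infty$ the sphere flattens onto an affine plane $\pi$, so such integrals are bounded by $C\,\|h\|_{H^1(\pi)}\le Cc$. I record the two features I need: (i) $G$ is continuous, $G(p,\rh)\to 0$ as $\rh\to 0$, and $G(p,\rh)\equiv 0$ once $|p|$ is so large that $S^\rh_p$ is disjoint from $\mathrm{supp}\,h$ (with $\rh$ in a compact range); (ii) $|G(p,\rh)|\le Cc$ uniformly once $\rh\ge\rh_0$, for some large $\rh_0$.

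Now the endgame, which is where Theorem~\ref{teoExist2} diverges from its mirror Theorem~\ref{teoExist}. Let $p_0$ realise $M=\tilde s_{p_0}>0$ and fix a scale $\rh_*$ of order one, so that $G(p_0,\rh_*)\ge a'M-Cc$ for a universal $a'>0$. Setting $\d_M:=a'M/(2C)$ and assuming $c<\d_M$ gives $G(p_0,\rh_*)\ge a'M/2>0$, while by (i)--(ii) one has $G<a'M/2$ on the whole boundary of a large box $[-R,R]^3\times[\rh_0^{-1},\rh_0]\subset\Rtre\times(0,\infty)$ for $R,\rh_0$ large (small $c$ kills the top face, $\rh\to 0$ decay the bottom face, compact support of $\tilde s$ the lateral faces). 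Hence $G$ attains a positive maximum at an \emph{interior} point $(p^*,\rh^*)$, a critical point of $G$. Because the reduction gives $\Phi_\e/\e^2\to G$ in $C^1_{\mathrm{loc}}$, this strict interior maximum persists: for $\e$ small $\Phi_\e$ attains a maximum over a small ball about $(p^*,\rh^*)$ at an interior point $(p_\e,\rh_\e)\to(p^*,\rh^*)$, a critical point of $\Phi_\e$. Through the Lyapunov--Schmidt correspondence this produces the perturbed standard sphere $S^{\rh_\e}_{p_\e}(w_\e)$, and $w_\e\to 0$ together with $(p_\e,\rh_\e)\to(p^*,\rh^*)$ give the convergence to a standard sphere as $\e\to 0$.

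The main obstacle is twofold. The hard analytic part is the non-degeneracy of $Z$ together with the precise expansion of $\Phi_\e$: one must prove that the only Jacobi fields are the four translations/dilations (so that the reduction is well posed with uniform estimates), and then isolate the sign and size of the leading coefficient $a\,\psi(\rh)$, checking that \emph{every} remaining term is genuinely of the planar $H^1$ type controlled by $c$. The point specific to Theorem~\ref{teoExist2} is the compactness of the maximisation: ruling out escape of the radius to $\rh=\infty$ (handled by (ii), where smallness of $c$ suppresses the large-sphere regime) and to $\rh=0$, and escape of the centre to $|p|=\infty$ (handled by the compact support of $\tilde s$), so that the maximum is attained in the interior and is a true critical point rather than a boundary artefact.
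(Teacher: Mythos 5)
Your overall architecture is the paper's: Lyapunov--Schmidt reduction along the non-degenerate critical manifold of round spheres, a reduced functional $\Phi_\e$ of $(p,\rh)$, a compact box whose lateral faces are killed by the disjointness of $S_p^\rh$ from $\mathrm{supp}\,h$, whose bottom face is killed by the $O(\rh^4)$ decay, and whose top face is killled by making $c$ small, and finally an interior global maximum produced by a test point beating the boundary sup. (Your observation that $G_1(S_p^\rh)=0$ follows instantly from $H^2/4-D=(k_1-k_2)^2/4$ and umbilicity is in fact slicker than the paper's explicit cancellation in Lemma \ref{lem:G1}.)

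However, there is a genuine gap at the decisive step, the lower bound at the test point. You place the test radius $\rh_*$ ``of order one'' and claim $G(p_0,\rh_*)\ge a'M-Cc$, i.e.\ that at that scale the reduced functional splits into a positive multiple of $\tilde s_{p_0}$ plus an error bounded by $Cc$. Neither half of this is available. The identification of the ``signal'' $\frac{\pi}{5}\tilde s_p\rh^4$ is a \emph{small-radius} expansion (Proposition \ref{svilPhipr}, valid for $\rh<R_1$ in the geodesic-sphere regime), and its remainder is $O(\e^2)O(\rh^5)$ --- controlled by an extra power of $\rh$, not by $c$. Conversely, the $O(c)$ control of $\Phi_\e$ is a \emph{large-radius} phenomenon: it comes from flattening $S_p^\rh\cap\mathrm{supp}\,h$ onto an affine plane (inequality \eqref{eq:hd}), which requires $\rh>R$ large; at a fixed finite radius there is no comparison between integrals over the curved sphere and the planar $H^1$ norms defining $c$, and moreover the non-signal part of $G_2$ contains second and third derivatives of $h$, which $c$ does not control at all (cf.\ point 2 of Remark \ref{rem:Ex}, which stresses precisely that $c$ and $\tilde s$ see different derivatives of $h$). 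The repair is to move the test point into the small-radius regime: choose $\bar\rh<R_1$ so small that $\frac{\pi}{5}\e^2 M\bar\rh^4$ dominates the $O(\e^2)O(\bar\rh^5)$ remainder, giving $\Phi_\e(\bar p,\bar\rh)>\frac{\pi}{10}M\e^2\bar\rh^4$ with no $c$ in sight; then use $c<\delta_M$ only to force the top face $\rh=R_4$ to satisfy $|\Phi_\e|<\frac{\pi}{11}M\e^2\bar\rh^4$. With that substitution your maximisation argument closes exactly as in the paper.
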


\begin{rem}\label{rem:Ex}
\begin{enumerate}
	\item As done in \cite{Mon}, the assumption  $h\in C_0 ^\infty(\Rtre)$ in Theorem \ref{teoExist} and Theorem \ref{teoExist2} can be relaxed asking that $h$ decreases fast enough at infinity with its derivatives.
	\item The conditions of Theorem \ref{teoExist}
	$$\sup\{\|h_{\mu\nu}\|_{H^1(\pi)}:\pi \text{ is an affine plane in $\Rtre$,  } \mu,\nu=1,2,3\}\leq c$$
	and 
	$$\tilde{s}_{\bar{p}} >A_c$$
	are compatible. In fact the former involves only the first derivatives of $h$ while the latter the second derivatives (see for instance \cite{And-Mal} page 24). Of course the same fact is true for the conditions  $\tilde{s}_{\bar{p}} \geq M$ and $c<\delta_M$ of Theorem 
	\ref{teoExist2}.
	\item If the perturbation $h$ satisfies some symmetries (invariance under reflections or rotations with respect to planes, lines or points of $\Rtre$), it is possible to prove multiplicity results (see Subsection 5.2 of \cite{Mon}). 
	\item If $h$ is $C^\infty$  then a standard regularity argument (see the paper of Leon Simon \cite{SiL} pag. 303 or the book by Morrey \cite{MCB}) shows that a $C^{2,\a}$ conformal Willmore surface is actually  $C^\infty$. It follows that the conformal Willmore surfaces exhibited in the previous Theorems, which are $C^{4,\a}$ by construction, are  $C^\infty$.
	\item The critical points $S_{p_\e}^{\rh_\e}(w_\e)$ of $I_\e$ are of (maybe degenerate) saddle type. In fact from \eqref{TeoWillmin} the standard spheres $S_p^\rh$ are strict global minima in the direction of variations in $C^{4,\a}(S^2)^\perp = Ker[I_0''(S_p^\rh)]^\perp \cap C^{4,\a}(S^2)$, it is easy to see that for small $\e$ the surfaces $S_{p_\e}^{\rh_\e}(w_\e)$ are still minima in the $C^{4,\a}(S^2)^\perp$ direction; but, since they are obtained as maximum points of the reduced functional, in the direction of $Ker[I_0''(S_p^\rh)]$ they are (maybe degenerate) maximum points. 
\end{enumerate}
\end{rem}

As we said before, the non existence result concerns perturbed geodesic spheres of small radius. Let us state it:

\begin{thm}\label{TeoNoExist}
Let $(M,g)$ be a  Riemannian manifold. Assume that the traceless Ricci tensor of $M$ at the point $\bar{p}$ is not null:
 $$\|S_{\bar{p}}\|\neq 0.$$
Then there exist $\rho_0>0$ and $r>0$ such that for radius $\rho<\rho_0$ and perturbation $w\in C^{4,\alpha}(S^2)$ with $\|w\|_{C^{4,\a}(S^2)}<r$, the surfaces $S_{\bar{p},\rho}(w)$ are \emph{not} critical points of the conformal Willmore functional $I$. 
\end{thm}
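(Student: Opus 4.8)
The plan is to reduce criticality to a condition on the radial profile and then to extract a non-vanishing radial derivative. Writing $F_\rho(w):=I(S_{\bar p,\rho}(w))$, the surface $S_{\bar p,\rho}(w)$ is a critical point of $I$ with respect to normal variations if and only if $w$ is a critical point of the function $F_\rho$, because the admissible normal variations of the surface correspond bijectively (for $\rho$ small) to variations of the radial profile $w$. I would work in the Lyapunov--Schmidt splitting $C^{4,\alpha}(S^2)=K\oplus K^\perp$, where $K=Ker[I_0''(S_p^\rho)]$ is the kernel of the second variation at a Euclidean round sphere; as recalled in Remark \ref{rem:Ex}, $K$ is spanned by the spherical harmonics of degree $0$ and $1$, while the flat second variation $I_0''$ is represented, on functions on $S^2$, by a self-adjoint nonnegative operator $L_0$ that vanishes on $K$ and on $K^\perp$ is bounded below by its lowest eigenvalue $\mu_2>0$ (attained on the degree-$2$ harmonics).

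Next I would expand $F_\rho$ in $\rho$. Using normal coordinates at $\bar p$ and the Riccati (Jacobi-field) expansion of the shape operator of small perturbed geodesic spheres --- the same curvature expansions of $H$, of $D=k_1k_2$ and of the area form $d\Sigma$ that underlie Theorems \ref{teoExist} and \ref{teoExist2} --- one obtains, uniformly for $\rho$ and $\|w\|_{C^{4,\alpha}(S^2)}$ small,
\begin{equation*}
F_\rho(w)=C\,\rho^4\|S_{\bar p}\|^2+\rho^2\langle Q_{\bar p},w\rangle_{L^2(S^2)}+\tfrac12\langle L_0 w,w\rangle_{L^2(S^2)}+\mathcal{R}(\rho,w),
\end{equation*}
where $Q_{\bar p}(\Theta):=S_{\bar p}(\Theta,\Theta)$ is the purely degree-$2$ traceless-Ricci quadratic form on $S^2$, $C$ is a universal constant, and $\mathcal{R}$ is a remainder of higher order. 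The expansion is driven by two facts: $\frac{H^2}{4}-D=\frac14(\lambda_1-\lambda_2)^2$ vanishes to high order on the (umbilic) geodesic spheres, so its leading contribution measures the curvature anisotropy $\lambda_1-\lambda_2$; and in dimension three the full Riemann tensor is determined by the Ricci tensor, so that integrating this anisotropy over the directions $\Theta\in S^2$ reproduces exactly $\|S_{\bar p}\|^2$ --- which is precisely why the hypothesis is $\|S_{\bar p}\|\neq0$.

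The obstruction is then \emph{radial}. Suppose $S_{\bar p,\rho}(w)$ is critical, so $\nabla_w F_\rho(w)=0$. Projecting onto $K^\perp$ gives $L_0 w^\perp+\rho^2 Q_{\bar p}+o(\rho^2)=0$, and since $L_0$ is invertible on $K^\perp$ this forces $w^\perp=-\frac{\rho^2}{\mu_2}Q_{\bar p}+o(\rho^2)$, whence $\langle Q_{\bar p},w\rangle_{L^2}=-\frac{\rho^2}{\mu_2}\|Q_{\bar p}\|_{L^2}^2+o(\rho^2)$ (note $\langle Q_{\bar p},w\rangle=\langle Q_{\bar p},w^\perp\rangle$, as $Q_{\bar p}\perp K$). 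On the other hand the one-parameter family obtained by varying the radius at fixed profile, $\rho'\mapsto S_{\bar p,\rho'}(w)$, is an admissible variation, so criticality also forces $\frac{d}{d\rho}F_\rho(w)=0$. Differentiating the expansion at fixed $w$ (the quadratic term drops out) and inserting the value of $\langle Q_{\bar p},w\rangle$ just found yields
\begin{equation*}
\frac{d}{d\rho}F_\rho(w)=\Big(4C-\frac{2c_0}{\mu_2}\Big)\rho^3\|S_{\bar p}\|^2+o(\rho^3)=:4C'\,\rho^3\|S_{\bar p}\|^2+o(\rho^3),
\end{equation*}
where $\|Q_{\bar p}\|_{L^2}^2=c_0\|S_{\bar p}\|^2$ with $c_0>0$ universal and $C'=C-\frac{c_0}{2\mu_2}$. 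Since $\|S_{\bar p}\|\neq0$ and $C'\neq0$, the right-hand side is nonzero for $\rho$ small, contradicting $\frac{d}{d\rho}F_\rho(w)=0$; choosing $\rho_0,r$ small enough that every remainder is dominated gives the theorem. (Equivalently, $C'\rho^4\|S_{\bar p}\|^2$ is the value of the reduced functional $\rho\mapsto F_\rho(w^\perp(\rho))$ obtained after solving the auxiliary equation, and the claim is that this reduced functional has non-vanishing $\rho$-derivative.)

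I expect the heart of the matter to be twofold. The computational core is carrying the curvature expansion of $I$ on perturbed geodesic spheres far enough to read off both $C$ and $L_0$, and in particular to verify the non-cancellation $C'\neq0$, i.e.\ that the degree-$2$ adjustment forced on $w$ does not annihilate the radial derivative; here the sign is favourable, since $\frac{H^2}{4}-D=\frac14(\lambda_1-\lambda_2)^2\geq0$ pointwise forces $F_\rho(w^\perp(\rho))\geq0$ and hence $C'\geq0$, and strict positivity reflects the fact that a small perturbed geodesic sphere cannot be totally umbilic when $\|S_{\bar p}\|\neq0$. The analytic core, and the main obstacle, is the uniform control of the remainder $\mathcal{R}$ and of its $\rho$-derivative in $(\rho,w)$, together with the a priori estimate guaranteeing that a critical candidate has $w$ (and not merely $w^\perp$) of size $O(\rho^2)$ rather than only $\|w\|<r$; this is what legitimises inserting the profile of $w^\perp$ into the radial derivative and discarding the kernel contributions.
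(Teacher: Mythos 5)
Your overall architecture coincides with the paper's: Lyapunov--Schmidt with the radius $\rho$ as the small parameter, the auxiliary equation forcing the orthogonal part of $w$ to be $O(\rho^2)$ with leading profile proportional to the degree-two harmonic $S_{\bar p}(\Theta,\Theta)$, and the non-vanishing of the radial derivative of the reduced functional as the obstruction to criticality. There are, however, two genuine gaps. The first is that you never control the kernel component of $w$. The theorem allows arbitrary $w\in C^{4,\alpha}(S^2)$ with $\|w\|_{C^{4,\alpha}}<r$, and $r$ cannot be taken to shrink with $\rho$; the constant and degree-one part $w^K$ spans the flat directions of $I_0''$, and criticality does \emph{not} force it to be $O(\rho^2)$ --- your closing remark about "the a priori estimate guaranteeing that a critical candidate has $w$ of size $O(\rho^2)$" is precisely the point that fails. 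With $\|w^K\|\sim r$, the quadratic remainder $\frac{1}{\rho^3}Q_p^{(2)(4)}(w)$ in the Euler--Lagrange equation of Proposition \ref{lem:svilI'} contributes terms of size $r^2$ to the projected equation, which swamp the $\rho^2\langle Q_{\bar p},w\rangle$ term and invalidate both the formula $w^\perp=-\frac{\rho^2}{\mu_2}Q_{\bar p}+o(\rho^2)$ and the subsequent computation of $\frac{d}{d\rho}F_\rho(w)$. The paper's remedy is Lemma \ref{lem:uw}: the kernel part of $w$ is absorbed into a change of center and radius, re-realizing $S_{\bar p,\bar\rho}(u)$ as $S_{p(u),\rho(\bar\rho,u)}(\tilde w)$ with $\tilde w\in C^{4,\alpha}(S^2)^\perp$; this in turn obliges one to prove the non-criticality uniformly for centers $p$ in a compact neighbourhood of $\bar p$ on which $\|S_p\|>\eta$, not only at $\bar p$ itself.

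The second gap is that the non-vanishing of your constant $C'$ is the heart of the matter and your soft argument does not deliver it. From $\frac{H^2}{4}-D=\frac{1}{4}(k_1-k_2)^2\geq 0$ you correctly get $C'\geq 0$, but the constrained $w^\perp$ is, to leading order, the \emph{minimizer} of the quadratic model over $K^\perp$ (since $\triangle_{S^2}(\triangle_{S^2}+2)$ is positive definite there), and on $S^2$ the traceless Hessian of a degree-two harmonic could a priori cancel the curvature-induced anisotropy of the shape operator at order $\rho$; "a small perturbed geodesic sphere cannot be totally umbilic" is exactly what must be proven rather than invoked, and $C'=0$ would be perfectly compatible with pointwise nonnegativity of the integrand (the leading term of the reduced functional would then merely be $O(\rho^5)$). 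The paper settles this only through the explicit computation of Proposition \ref{svilPhipr}, which produces $\Phi(p,\rho)=\frac{\pi}{5}\|S_p\|^2\rho^4+O_p(\rho^5)$, i.e.\ $C'=\pi/5>0$; some such computation, carried to the end, is indispensable for your argument as well.
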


\begin{rem}\label{rem:NoEx}
\begin{enumerate}
	\item Observe the difference with the flat case: thanks to \eqref{TeoWillmin}, in $\Rtre$ the spheres of any radius are critical points of the conformal Willmore functional $I$ (has we noticed, the term $D$ does not influence the differential properties of the functional by Gauss-Bonnet Theorem); on the contrary, in the case of ambient metric with non null traceless Ricci curvature we have just shown that the geodesic spheres of small radius are not critical points.
	\item The condition $\|S_p\|\neq 0$ is generic.
	\item If $(M,g)$ has not constant sectional curvature then there exists at least one point $\bar{p}$ such that  $\|S_ {\bar{p}}\| \neq 0$. In fact if $\|S\|\equiv 0$ then $(M,g)$ is Einstein, but the Einstein manifolds of dimension three have constant sectional curvature (for example see \cite{Pet} pages 38-41). 
\end{enumerate}
\end{rem}

The abstract method employed throughout the paper is similar to the one  used in the previous article \cite{Mon}: the Lyapunov-Schmidt reduction (for more details about the abstract method see Section \ref{subsec:PertMeth}).
\\The main difficulty here is that, as we will see, the expansions are degenerate, and require more precision.

We discuss next the structure of the article, but first let us explain (informally) the main idea (for the details see Subsection \ref{subsec:FDRnon} and Subsection \ref{subsec:FDR}). 
\\As we remarked, \eqref{TeoWillmin} implies that the Willmore functional in the euclidean space $\Rtre$ possesses a critical manifold $Z$  made of standard spheres $S_p^\rh$. The tangent space to $Z$ at $S_p^\rh$ is composed of constant and affine functions on $S_p^\rh$ so, with a pull back via the parametrization, on $S^2$. The second derivative of $I_0$ at $S_p^\rh$ is 
$$I_0''(S_p^\rh)[w]=\frac{1}{2} \triangle_{S^2} (\triangle_{S^2}+2)w$$
(for explanations and details see Remark \ref{rem:I0''}) which is a Fredholm operator of index zero and whose Kernel is made of the constant and affine functions; exactly the tangent space to $Z$.
\\So, considered $C^{4,\a}(S^2)$ as a subspace of $L^2(S^2)$ and called 

$$C^{4,\alpha}(S^2)^\perp :=C^{4,\alpha}(S^2)\cap Ker[\triangle_{S^2}(\triangle_{S^2}+2)]^\perp,$$
it follows that $I_0''|_{C^{4,\alpha}(S^2)^\perp}$ is invertible on its image and one can apply the Lyapunov-Schmidt reduction. 
Thanks to this reduction, the critical points of $I_\e$ in a neighbourhood of $Z$ are exactly the stationary points of a function (called reduced functional) $\Phi_\e:Z \to \R$ of \emph{finitely many} variables (we remark that in a neighbourhood of $Z$ the condition is necessary and sufficient for the existence of critical points of $I_\e$). 
\\

In order to study the function $\Phi_\e$,  we will compute explicit formulas  and estimates of the conformal Willmore functional.
More precisely for small radius $\rho$ we will give an expansion of the functional on small perturbed geodesic spheres, for  large radius we will estimate the functional on perturbed standard spheres and we will link the geodesic and standard spheres in a smooth way using a cut off function (for details see Subsection \ref{subsec:FDR}).

The paper is organized as follows: in Section \ref{sec:IMg} we will start  in the most general setting, the conformal Willmore functional for small perturbed geodesic spheres in ambient manifold $(M,g)$. Even in this case the reduction method can be performed, using the small radius $\rh$ as perturbation parameter (see Lemma \ref{lemEqAuxContr}).
\\Employing the geometric expansions of Subsection \ref{subsec:geomExp} and the expression of the constrained $w$ given in Subsection \ref{subsec:FDRnon}, in Subsection \ref{subsec:ExpIMg} we will compute the expansion of the reduced functional on small perturbed geodesic spheres of $(M,g)$. Explicitly, in Proposition \ref{svilPhipr}, we will get 
\begin{equation}\label{ExpSmallr}
\Phi(p,\rh)=\frac{\pi}{5}\|S_p\|^2\rh^4+O_p(\rh^5)
\end{equation}
where $\Phi(.,.)$ is the reduced functional and, as before, $S_p$ is the traceless Ricci tensor evaluated at $p$.
Using this formula we will show that if $\|S_{\bar{p}}\|\neq 0$ then $\Phi(\bar{p},.)$ is strictly increasing for small radius. 
The non existence result will follow from the necessary condition.

Section \ref{sec:Ie} will be devoted to the conformal Willmore functional in ambient manifold $(\Rtre, g_\e)$.   
In Subsection \ref{subsec:FDR} we will treat the applicability of the abstract method and in the last Subsection \ref{subsec:ProofEx} we will bound the reduced functional $\Phi_\e$ for large radius $\rh$ using the computations of Subsection \ref{subsec:ExpRF}. We remark that the expansion of $\Phi_\e$ is degenerate in $\e$ (i.e. the first term in the expansion is null and $\Phi_\e=O(\e^2)$), clearly this feature complicates the problem.
Using the estimates on the reduced functional $\Phi_\e$ for large radius and the expansions for small radius (since for small radius we take geodesic spheres it will be enough to specialize \eqref{ExpSmallr} in the setting $(\Rtre,g_\e)$) we will force $\Phi_\e$ to have a global maximum, sufficient condition to conclude with the existence results.     
 
\begin{center}

{\bf Notations and conventions}

\end{center}

\noindent

1) $\R^+$ denotes the set of strictly positive real numbers.
\\

2) As mentioned in the Introduction, the perturbed spheres will play a central role throughout this paper.

$\cdot$ First, let us define the perturbed standard sphere $S_p^\rho(w) \subset \Rtre$ we will use to prove the existence results.
We denote with $S^2$ the standard unit sphere in the euclidean $3$-dimensional space , $\Theta \in S^2$ is the radial versor with components $\Theta^\mu$  parametrized by the polar coordinates $0<\theta^1<\pi$ and  $0<\theta^2< 2\pi$ chosen in order to satisfy
\begin{displaymath} 
\left\{ \begin{array}{ll}
\Theta^1= \sin \theta ^1 \cos \theta ^2 \\
\Theta^2= \sin \theta ^1 \sin \theta ^2 \\
\Theta^3= \cos \theta ^1. \\
\end{array} \right.
\end{displaymath}
We call  $\Theta_i$ the coordinate vector fields on $S^2$ 
$$\Theta_1:= \frac{\partial \Theta}{\partial \theta^1}, \quad \Theta_2:= \frac{\partial \Theta}{\partial \theta^2}$$
and $\bar{\theta}_i$ or $\bar{\Theta}_i$ the corresponding normalized ones 
$$\bar{\theta}_1=\bar{\Theta}_1:=\frac{\Theta_1}{\|\Theta_1\|}, \quad \bar{\theta_2}=\bar{\Theta}_2:=\frac{\Theta_2}{\|\Theta_2\|}.$$
The standard sphere in $\Rtre$ with center $p$ and radius $\rh>0$ is denoted by $S_p^\rh$; we parametrize it as $(\theta^1,\theta^2) \mapsto p+\rho\Theta(\th^1,\th^2)$ and  call $\theta_i$ the coordinate vector fields 
$$\theta_1:=\rho \frac{\partial \Theta}{\partial \theta^1}, \quad \theta_2:=\rho \frac{\partial \Theta}{\partial \theta^2}.$$

The perturbed spheres will be normal graphs on standard spheres by a function $w$ which belongs to a suitable function space. 
Let us introduce the function space which has been chosen by technical reasons (to apply Schauder estimates in Lemma \ref{lemw}).
\\Denote $C^{4,\a}(S^2)$ (or simply $C^{4,\a}$)  the set of the $C^4$ functions on $S^2$ whose fourth derivatives, with respect to the tangent vector fields, are $\a$-H\"older ($0<\a<1$). The Laplace-Beltrami operator on $S^2$ is denoted by $\triangle_{S^2}$ or, if there is no confusion, as $\triangle$.
The fourth order elliptic operator $\triangle (\triangle +2)$ induces a splitting of $L^2(S^2)$:
$$L^2(S^2)= Ker[\triangle (\triangle+2)]\oplus Ker[\triangle (\triangle+2)]^\perp $$
(the splitting makes sense because the kernel is finite dimensional, so it is closed).
\\If we consider $C^{4,\a}(S^2)$ as a subspace of $L^2(S^2)$, we can define 
$$C^{4,\alpha}(S^2)^\perp :=C^{4,\alpha}(S^2)\cap Ker[\triangle (\triangle+2)]^\perp.$$
Of course ${C^{4,\alpha}(S^2)}^\perp$ is a Banach space with respect to the $C^{4,\a}$ norm; it is the space from which we will get the perturbations $w$. If there is no confusion $C^{4,\alpha}(S^2)^\perp$ will be called simply ${C^{4,\a}}^\perp$.
\\Now we can define the perturbed spheres we will use to prove existence of critical points: 
fix $\rh>0$ and a small ${C^{4,\a}}^\perp$ function $w$; the perturbed sphere $S_p^\rh(w)$ is the surface parametrized by
$$\Theta \in S^2 \mapsto p+ \rho\big(1-w(\Theta)\big) \Theta.$$

$\cdot$ Now let us define the perturbed geodesic spheres $S_{p,\rh}(w)$ in the three dimensional Riemannian manifold $(M,g)$; we will use them to prove the non-existence result. 
\\Once a point $p\in M$ is fixed we can consider the exponential map 
$Exp_p$ with center $p$. For $\rh>0$ small enough, the sphere $\rh S^2\subset T_pM$ is contained in the radius of injectivity of the exponential. We  call $S_{p,\rh}$ the geodesic sphere of center $p$ and radius $\rh$. This hypersurface can be parametrized by
$$\Theta \in S^2 \subset T_pM \mapsto Exp_p[\rh \Theta].$$
Analogously to the previous case, fix $p\in M$, $\rh>0$ and a small $C^{4,\a}(S^2)$ function $w$; the perturbed geodesic sphere $S_{p,\rh}(w)$ is the surface parametrized by
$$\Theta \in S^2 \mapsto Exp_p[\rho\big(1-w(\Theta)\big) \Theta].$$
The tangent vector fields on $S_{p,\rh}(w)$ induced by the canonical polar coordinates on $S^2$ are denoted by $Z_i$.
\\

3) Let $(M,g)$ be a $3$-dimensional Riemannian manifold. 

$\cdot$ First we make the following convention: the Greek index letters, such as $\mu,\nu,\iota,\ldots,$ range from $1$ to $3$ while the Latin index letters, such as $i,j,k,\ldots,$ will run from $1$ to $2$.   

$\cdot$ About the Riemann curvature tensor we adopt the convention of \cite{Will}:
denoting $\mathfrak X(M)$ the set of the vector fields on $M$, $\forall X,Y,Z \in \mathfrak X (M)$
$$R(X,Y)Z := \nabla _X \nabla_Y Z - \nabla_Y \nabla_X Z - \nabla_{[X,Y]}Z $$  
$$R(X,Y,Z,W) :=g(R(Z,W)Y,X);$$
chosen in $p$ an orthonormal frame  $E_\mu$, the Ricci curvature tensor is
\begin{eqnarray}
Ric_p(v_1,v_2)&:=&\sum_{\mu = 1} ^{3} R(E_\mu,v_1,E_\mu,v_2)=\sum_1 ^{3} g(R_p(E_\mu,v_2)v_1, E_\mu) \nonumber \\
&=&-\sum_{\mu = 1} ^{3} g(R_p(v_2,E_\mu)v_1, E_\mu) \quad \forall v_1,v_2 \in T_pM. \label{eqRic}
\end{eqnarray} 

$\cdot$ In order to keep formulas not too long, we introduce the following notation:
\begin{eqnarray}
R(0i0j)&:=& g(R_p(\Theta ,\Theta _i)\Theta ,\Theta _j) \nonumber\\ 
\nabla_0R(0i0j)&:=& g(\nabla_{\Theta}R_p(\Theta ,\Theta _i)\Theta ,\Theta _j)\nonumber\\ 
\nabla_{00} R(0i0j)&:=& g(\nabla_{\Theta}\nabla_{\Theta}R_p(\Theta ,\Theta _i)\Theta ,\Theta _j) \nonumber\\
R(0i0\mu)&:=&g(R_p(\Theta ,\Theta _i)\Theta ,E_\mu).\nonumber
\end{eqnarray}
In the following ambiguous cases we will mean:
\begin{eqnarray}
R(0101)&:=& g(R_p(\Theta ,\Theta _1)\Theta ,\Theta _1) \nonumber\\
R(0\bar{2}0\bar{2})&:=& g(R_p(\Theta ,\bar{\Theta} _2)\Theta ,\bar{\Theta} _2)  \nonumber\\ 
R(010\bar{2})&:=&g(R_p(\Theta ,\Theta _1)\Theta ,\bar{\Theta} _2) \nonumber.
\end{eqnarray}
$\cdot$Recall the definitions of the Hessian and the Laplace-Beltrami operator on a function $w$:
$$Hess(w)_{\mu\nu}:=\nabla _\mu \nabla_\nu w$$
$$\triangle:= g^{\mu\nu} \nabla _\mu \nabla_\nu w. $$

4) Let $(\mathring{M},\mathring{g})\hookrightarrow (M,g)$ be an isometrically immersed surface. Recall the notion of second fundamental form $\mathring{h}$: fix a point $p$ and an orthonormal base $Z_1,Z_2$ of $T_p\mathring{M}$; the (inward) normal unit vector is denoted as $\mathring{N}$. By the Weingarten equation  $\mathring{h}_{ij}= - g( \nabla _{Z_i} \mathring{N}, Z_j)$.
\\ Call $k_1$ and $k_2$ the principal curvatures (the eigenvalues of the second fundamental form with respect to the first fundamental form of $\mathring{M}$, i.e. the roots of $\det(\mathring{h}_{ij}-k\mathring{g}_{ij})=0$). We adopt the convention that the mean curvature is defined as $H:=k_1+k_2$. 
\\The product of the principal curvatures will be denoted with $D$:
\begin{equation} \label{defD} 
D:=k_1k_2=\frac {det(\mathring {h})}{det( \mathring {g})}.
\end{equation}

5) $\cdot$ Following the notation of \cite{PX}, given $a \in \N$, any expression of the form $L_p^{(a)}(w)$ denotes a linear combination of the function $w$ together 
with its derivatives with respect to the tangent vector fields $\Theta_i$ up to order $a$. The coefficients of $L_p^{(a)}$ 
might depend on $\rh$ and $p$ but, for all $k \in \N$, there exists a constant $C>0$ independent on $\rh \in (0,1)$ and $p \in M$ such that 
$$\|L_p^{(a)}(w)\|_{C^{k,\a}(S^2)}\leq C \|w \|_{C^{k+a,\a}(S^2)}.$$  

$\cdot$ Similarly, given $b\in \N$, any expression of the form $Q_p^{(b)(a)}(w)$ denotes a nonlinear operator in the function $w$ together with its derivatives with respect to the tangent vector fields $\Theta_i$ up to order $a$ such that, for all $p \in M$, $Q_p^{(b)(a)}(0)=0$. The coefficients of the Taylor expansion of $Q_p^{(b)(a)}(w)$ in powers of $w$ and its partial derivatives might depend on $\rh$ and $p$ but, for all $k \in \N$, there exists a constant $C>0$ independent on $\rh \in (0,1)$ and $p \in M$ such that 
\begin{equation}\label{stimaQ}
\|Q_p^{(b)(a)}(w_2)-Q_p^{(b)(a)}(w_1) \|_{C^{k,\a}(S^2)}\leq c \big(\|w_2\|_{C^{k+a,\a}(S^2)}+\|w_1\|_{C^{k+a,\a}(S^2)}\big)^{b-1}
\times \|w_2-w_1\|_{C^{k+a,\a}(S^2)},
\end{equation}
provided $\|w_l\|_{C^{a}(S^2)}\leq 1$, $l=1,2$. If the numbers $a$ or $b$ are not specified, we intend that their value is $2$.

$\cdot$ We also agree that any term denoted by $O_p(\rho ^d)$ is a smooth function on $S^2$ that might depend on $p$ but which is bounded by a constant (independent on $p$) times $\rho ^d$ in $C^k$ topology, for all $k \in N$.
\\

6) Large positive constants are always denoted by $C$, and the value of
$C$ is allowed to vary from formula to formula and also within the
same line. When we want to stress the dependence of the constants on
some parameter (or parameters), we add subscripts to $C$, as $C_\d$,
etc.. Also constants with subscripts are allowed to vary.

\begin{center}

{\bf Acknowledgments}

\end{center}

\noindent 
This work has been supported by M.U.R.S.T within the PRIN 2006 ``Variational Methods and Nonlinear Differential Equations'' and by the Project FIRB-IDEAS ``Analysis and Beyond''.
\\I would like to thank my supervisor Prof. Malchiodi for support and for useful discussions about this topic.

\section{A Preliminary result: the Lyapunov-Schmidt reduction}\label{subsec:PertMeth}

The technique used throughout this paper relies on an abstract perturbation method which first appeared in \cite{AB1}, \cite{AB2} and is extensively treated with proofs and examples in \cite{AM}.
Let us briefly summarize it. Actually we present the abstract method in a form which permits to deal with degenerate expansions (as the ones we will have to handle).  

Given an Hilbert space $H$, let $I_\e: H \to \R$ be a $C^2$ functional of the form
$$I_\e(u)=I_0(u)+\e G_1(u)+\e^2 G_2(u)+o(\e^2),$$
where $I_0 \in C^2 (H, \R)$ plays the role of the unperturbed functional and $G_1, G_2 \in C^2(H,\R)$ are the perturbations.

We first assume  that there exists a finite dimensional smooth manifold $Z$ made of critical points of $I_0$:
 $I_0'(z)=0$ for all $z\in Z$. The set $Z$ will be called \emph{critical manifold} (of $I_0$).  
The critical manifold is supposed to satisfy the following non degeneracy conditions:

(ND) for all $z\in Z$,  $T_zZ = Ker[I_0''(z)]$, 

(Fr) for all $z\in Z$, $I_0''(z)$ is a Fredholm operator of index zero.
\\Under these assumptions it is known that near $Z$ there exists a perturbed manifold $Z^\e$ such that the critical points of $I_\e$ constrained on $Z^\e$ give rise to stationary points of $I_\e$. 
\\More precisely, the key result is the following Theorem.

\begin{thm}\label{Thm:FDR}
Suppose $I_0$ possesses a non degenerate (satisfying (ND) and (Fr)) critical manifold $Z$ of dimension $d$.
\\Given a compact subset $Z_c$ of $Z$, there exists $\e_0>0$ such that for all $|\e|<\e_0$ there is a smooth function 
$$w_\e(z) : Z_c \to H$$
such that

(i) for $\e=0$ it results $w_\e (z)=0$,  $\forall z \in Z_c$;

(ii) $w_\e(z)$ is orthogonal to $T_zZ$, $\forall z \in Z_c$;

(iii) the manifold
$$Z^\e=\{z+w_\e(z): z\in Z_c \}$$
is a natural constraint for $I_\e'$. Namely, denoting 
$$\Phi_\e(z)= I_\e(z+w_\e(z)):Z_c \to \R $$
the constriction of $I_\e$ to $Z^\e$, if $z_\e$ is a critical point of $\Phi_\e$ then $u_\e=z_\e+w_\e(z_\e)$ is a critical point of $I_\e$.
\end{thm}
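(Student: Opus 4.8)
The plan is to use the classical Lyapunov--Schmidt decomposition: split the Euler--Lagrange equation $I_\e'(u)=0$ into an \emph{auxiliary} equation, posed in the directions orthogonal to $T_zZ$, and a \emph{bifurcation} equation, posed along $Z$. I would solve the auxiliary equation for every $z$ by the implicit function theorem, producing $w_\e(z)$, and then reduce the bifurcation equation to the finite-dimensional problem of finding critical points of $\Phi_\e$. Throughout I identify $I_\e'(u)$ with its Riesz representative in $H$, so that $I_\e'(u)\in H$ and $I_0''(z)$ is a bounded self-adjoint operator on $H$. For each $z\in Z$ set $W_z:=(T_zZ)^\perp$ and let $P_z$ and $Q_z=\mathrm{Id}-P_z$ be the orthogonal projections onto $W_z$ and $T_zZ$ respectively; since $Z$ is a smooth manifold these projections depend smoothly on $z$.

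First I would solve the auxiliary equation. Define $F(z,w,\e):=P_z\,I_\e'(z+w)$ for $w\in W_z$. Since $z$ is a critical point of $I_0$ one has $F(z,0,0)=P_z I_0'(z)=0$, while $\partial_w F(z,0,0)=P_z I_0''(z)|_{W_z}=:L_z$. The heart of this step is the invertibility of $L_z$: by (Fr) the operator $I_0''(z)$ is Fredholm of index zero, hence has closed range; being self-adjoint, its range equals $(\ker I_0''(z))^\perp$, and by (ND) this kernel is exactly $T_zZ$, so the range is $W_z$ and $L_z:W_z\to W_z$ is a continuous bijection, thus a Banach-space isomorphism by the open mapping theorem. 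Applying the implicit function theorem uniformly over the compact set $Z_c$ (using the smooth $z$-dependence of $P_z$ and of $L_z^{-1}$) yields $\e_0>0$ and a smooth map $w_\e(z)\in W_z$ with $F(z,w_\e(z),\e)=0$ for $|\e|<\e_0$ and $w_0(z)=0$. This gives at once properties (i) and (ii): $w_\e(z)\in W_z$ is orthogonal to $T_zZ$ by construction, and $w_0(z)=0$.

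It remains to establish the natural constraint property (iii). Writing $u=u_\e(z):=z+w_\e(z)$, the auxiliary equation reads $P_z I_\e'(u)=0$, so $I_\e'(u)=Q_z I_\e'(u)\in T_zZ$. For $\xi\in T_zZ$ the chain rule gives
$$\Phi_\e'(z)[\xi] = \langle I_\e'(u),\, \xi + \partial_\xi w_\e(z)\rangle = \langle I_\e'(u),\xi\rangle + \langle I_\e'(u),\partial_\xi w_\e(z)\rangle.$$
To control the last term I would differentiate the orthogonality relation $\langle w_\e(z),\eta(z)\rangle=0$, valid for every smooth section $\eta$ of $TZ$, obtaining $\langle \partial_\xi w_\e,\eta\rangle = -\langle w_\e,\partial_\xi\eta\rangle$. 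Choosing a local orthonormal frame $e_1,\dots,e_d$ of $TZ$ near $z$ and expanding $I_\e'(u)=\sum_i\beta_i(z)\,e_i(z)$, this identity turns the reduced gradient into $\Phi_\e'(z)[e_j]=\beta_j-\sum_i\beta_i\langle w_\e,\partial_{e_j}e_i\rangle$, that is $\Phi_\e'(z)=(\mathrm{Id}-M_\e(z))\,\beta(z)$ with $(M_\e)_{ji}=\langle w_\e,\partial_{e_j}e_i\rangle=O(\|w_\e\|)$. Since $w_\e\to 0$ as $\e\to 0$, the matrix $\mathrm{Id}-M_\e$ is invertible for $\e$ small, so $\Phi_\e'(z_\e)=0$ forces $\beta(z_\e)=0$, i.e. $I_\e'(u_\e)=0$, which is precisely (iii).

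The main obstacle is this bifurcation step: one must convert the geometric fact that ``$I_\e'(u)$ lies in $T_zZ$'' into the sharp statement that the vanishing of the finite-dimensional gradient $\Phi_\e'$ is \emph{equivalent} to the vanishing of $I_\e'$. The delicate point is that $\partial_\xi w_\e(z)$ need not lie in $W_z$, since the splitting $H=T_zZ\oplus W_z$ itself moves with $z$; the differentiated orthogonality identity is exactly what tames this term and produces the invertible coupling matrix $\mathrm{Id}-M_\e$. A secondary technical issue, which I would dispatch using the compactness of $Z_c$ and the smooth $z$-dependence of all data, is the uniformity of $\e_0$ and the smoothness of $z\mapsto w_\e(z)$ in the implicit function theorem.
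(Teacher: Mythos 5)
Your proposal is correct: the paper itself does not prove Theorem \ref{Thm:FDR} but defers it to the references \cite{AB1}, \cite{AB2}, \cite{AM}, and your argument is precisely the standard one given there --- solving the auxiliary equation $P_zI_\e'(z+w)=0$ by the implicit function theorem (using that (Fr) and (ND) make $I_0''(z)|_{(T_zZ)^\perp}$ an isomorphism onto $(T_zZ)^\perp$), and then handling the bifurcation equation via the differentiated orthogonality relation, which yields the invertible matrix $\mathrm{Id}-M_\e$ and hence the natural constraint property. The only point worth making explicit is that $\|w_\e(z)\|\to 0$ and the invertibility of $\mathrm{Id}-M_\e(z)$ hold \emph{uniformly} for $z$ in the compact set $Z_c$, which your appeal to compactness already covers.
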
 
Thanks to this fundamental tool, in order to find critical points of $I_\e$, we can reduce ourselves to study $\Phi_\e$ which is a function of \emph{finitely} many variables. 

If we are slightly more accurate, it can be shown that the function $w_\e(z)$ is of order $O(\e)$ as $\e\to 0$ uniformly in $z$ varying in the compact $Z_c$. In our application, the expansion is degenerate in the sense that 
$$G_1(z)=0 \quad \forall z \in Z.$$
Using the previous facts, by a Taylor expansion it is  easy to see that (we will prove it in full detail in Lemma \ref{lemsvile2}) 
$$\Phi_\e(z)= \e^2 \Big[G_2(z)-\frac{1}{2} \big(G_1'(z)|I_0''(z)^{-1} G_1'(z) \big)  \Big]+o(\e^2).$$
In Section \ref{sec:Ie} we will give sense to this formula, which will be crucial for the estimates involved in the existence result.

\section {The conformal Willmore functional on perturbed geodesic spheres $S_{p,\rho}(w)$ of a general Riemannian Manifold $(M,g)$}\label{sec:IMg}

\subsection{Geometric expansions}\label{subsec:geomExp}
In this subsection we give accurate expansions of the geometric quantities appearing in the conformal Willmore functional. First we recall and refine the well-known expansions of the first and  second fundamental form and the mean curvature for the 
geodesic perturbed spheres $S_{p,\rh}(w)$ introduced in the previous ``notations and conventions''. Recall that $\Theta_i$ are the coordinate vector fields on $S^2$ (induced by polar coordinates) and $Z_i$ are the corresponding coordinate vector fields on $S_{p,\rh}(w)$. The derivatives of $w$ with respect to $\Theta_i$ are denoted by $w_i$.

Let $\mathring{g}$ denote the first fundamental form on $S_{p,\rh}(w)$ induced by the immersion in $(M,g)$. The next Lemma, whose proof can be found in \cite{PX} (Lemma 2.1), gives an expansion of the components $\mathring{g}_{ij}:=g_p(Z_i,Z_j)$:

\begin{lem}\label{lem1forma}
The first fundamental form on $S_{p,\rh}(w)$ has the following expansion:
\begin{eqnarray}
(1-w)^{-2} \rho^{-2} \mathring{g}_{ij}&=& g(\Theta _i,\Theta _j) + (1-w)^{-2}w_i w_j +\frac {1}{3} R(0i0j) \rho ^2 (1-w)^2  +\frac {1}{6} \nabla_0 R(0i0j) \rho ^3 (1-w)^3 \nonumber\\
&&+\Big[ \frac {1}{20} \nabla _{00} R(0i0j)+\frac {2}{45} R(0i0\mu) R(0j0\mu)\Big]\rho ^4 (1-w)^4+ O_p(\rho^5)+\rho^5 L_p(w)+\rho^5 Q_p^{(2)}(w) \nonumber
\end{eqnarray}
where all curvature terms and scalar products are evaluated at $p$ (since we are in normal coordinates, at $p$ the metric is euclidean).
\end{lem}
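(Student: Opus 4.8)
The plan is to derive the expansion of $\mathring{g}_{ij}=g_p(Z_i,Z_j)$ by parametrizing the perturbed geodesic sphere and carefully expanding the exponential map in normal coordinates. Since the surface $S_{p,\rh}(w)$ is parametrized by $\Theta\in S^2\mapsto Exp_p[\rh(1-w(\Theta))\Theta]$, I would set $r(\Theta):=\rh(1-w(\Theta))$ and write the immersion as $F(\Theta)=Exp_p[r(\Theta)\Theta]$. The coordinate vector fields $Z_i$ are then $Z_i=dF(\Theta_i)=\pa_{\Theta_i}\big(Exp_p[r\Theta]\big)$, which by the chain rule splits into a radial part proportional to $r_i=\rh(1-w)(-w_i/(1-w))\cdot(\ldots)$ — more precisely $r_i=-\rh w_i$ — contributing a term along the geodesic, and an angular part $r\,\pa_{\Theta_i}(Exp_p[r\Theta])$ at fixed $r$.

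The key tool is the expansion of the metric $g$ in geodesic normal coordinates centered at $p$. In such coordinates one has the classical expansion
$$g_{\mu\nu}(x)=\delta_{\mu\nu}-\tfrac{1}{3}R_{\mu\a\nu\b}x^\a x^\b-\tfrac{1}{6}\nabla_\g R_{\mu\a\nu\b}x^\g x^\a x^\b+\Big[-\tfrac{1}{20}\nabla_{\g\d}R_{\mu\a\nu\b}+\tfrac{2}{45}R_{\mu\a\l\b}R_{\nu\g\l\d}\Big]x^\g x^\d x^\a x^\b+O(|x|^5),$$
with all curvature terms evaluated at $p$. Evaluating along the ray $x=r\Theta$ with $r=\rh(1-w)$ turns each monomial $x^\a x^\b$ into $r^2\Theta^\a\Theta^\b$, and the contractions reproduce exactly the curvature symbols $R(0i0j)$, $\nabla_0 R(0i0j)$, $\nabla_{00}R(0i0j)$ and $R(0i0\mu)R(0j0\mu)$ defined in the notations. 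I would compute $g_p(Z_i,Z_j)$ by pairing the angular parts (which carry the leading $g(\Theta_i,\Theta_j)$ from the Euclidean piece plus the curvature corrections) and accounting for the radial contribution $r_ir_j=\rh^2 w_iw_j$; after dividing through by $(1-w)^2\rh^2$ the radial term becomes $(1-w)^{-2}w_iw_j$, matching the stated formula. Since the paper cites \cite{PX} Lemma 2.1 for the lower-order part, the bulk of the work is to track the fourth-order term $\rho^4$ with its two curvature contributions and to verify that all remainders assemble into $O_p(\rho^5)+\rho^5 L_p(w)+\rho^5 Q_p^{(2)}(w)$ in the sense of the $L_p$, $Q_p$ notation.

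The main obstacle I anticipate is twofold. First, keeping the powers of $(1-w)$ exactly right: each geometric ingredient (the Jacobian of $Exp_p$, the radial rescaling, the metric expansion evaluated at $r=\rh(1-w)$) contributes its own factor of $(1-w)$, and the stated formula records these explicitly as $(1-w)^2$, $(1-w)^3$, $(1-w)^4$ multiplying the successive curvature orders, so one must organize the expansion in $r$ rather than naively in $\rh$. Second, and more delicately, is controlling the error terms: I must show that the $w$-dependent remainders beyond order $\rho^4$ have the structure of a linear operator $L_p(w)$ plus a quadratic-type operator $Q_p^{(2)}(w)$ with coefficients uniformly bounded in $p$ and $\rh\in(0,1)$, which requires uniform bounds on the derivatives of the normal-coordinate metric expansion and its remainder over the compact injectivity range — this is where the smoothness of $(M,g)$ and the uniform control of curvature on compacta enter, and where the careful bookkeeping demanded by the later degenerate expansions becomes essential.
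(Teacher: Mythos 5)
Your proposal is correct and follows essentially the same route as the proof the paper relies on: the paper defers this lemma to Pacard--Xu (Lemma 2.1), whose argument is precisely the one you outline, namely expanding the metric in geodesic normal coordinates along the ray $x=\rho(1-w)\Theta$ and splitting $Z_i$ into radial and angular parts. The one ingredient worth naming explicitly is the Gauss lemma, which is what guarantees that the radial--radial pairing contributes exactly $r_ir_j=\rho^2 w_iw_j$ with no curvature correction and that the radial--angular cross terms vanish, so that the term $(1-w)^{-2}w_iw_j$ appears cleanly after dividing by $\rho^2(1-w)^2$.
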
 

Let $\mathring{h}$ denote the second fundamental form on $S_{p,\rh}(w)$ induced by the immersion in $(M,g)$ and $\mathring{N}$ the inward normal unit vector to $S_{p,\rh}(w)$; by the Weingarten equation  $\mathring{h}_{ij}= - g( \nabla _{Z_i} \mathring{N}, Z_j)$.

\begin{lem} \label{lem2forma}
The second fundamental form on $S_{p,\rh}(w)$ has the following expansion:
\begin{eqnarray}
\mathring{h}_{ij}&=& \rho (1-w) g(\Theta _i, \Theta _j) + \rho (Hess_{S^2}w)_{ij}+ \frac{2}{3} R(0i0j)\rho ^3 (1-w)^3+\frac{5}{12} \nabla_0  R(0i0j)\rho ^4 (1-w)^4 \nonumber \\
&& + \Big[\frac{3}{20}\nabla_{00} R_p(0i0j)+ \frac{2}{15} R(0i0\mu)R(0j0\mu)\Big] \rho ^5 (1-w)^5-\rho B^{k}_{ij}w_k + O_p(\rho ^6)+\rho ^5 L_p(w)+\rho^2 Q_p^{(2)}(w) \nonumber
\end{eqnarray} 
where  $B^{k}_{ij}$ are functions on $S^2$ of the form $B^k_{ij}=O(\rho^2)+L_p(w)+Q^{(2)}_p(w)$ and, as usual, all curvature terms and scalar products are evaluated at $p$.
\end{lem}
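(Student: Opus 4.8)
The plan is to compute $\mathring{h}_{ij}$ directly from the Weingarten equation. Since $g$ is compatible with the Levi-Civita connection $\nabla$ and $g(\mathring{N},Z_j)=0$, the quantity $\mathring{h}_{ij}=-g(\nabla_{Z_i}\mathring{N},Z_j)$ equals $g(\nabla_{Z_i}Z_j,\mathring{N})$, which in coordinates reads
$\mathring{h}_{ij}=g_{\mu\nu}\big(\partial_i\partial_j F^\mu+\Gamma^\mu_{\alpha\beta}\,\partial_iF^\alpha\,\partial_jF^\beta\big)\mathring{N}^\nu$,
where $F$ is the immersion. I would work in geodesic normal coordinates centered at $p$, in which $S_{p,\rho}(w)$ is parametrized simply by $F^\mu(\Theta)=\rho(1-w)\Theta^\mu$, so that $Z_i^\mu=\partial_iF^\mu=\rho[(1-w)\Theta_i^\mu-w_i\Theta^\mu]$. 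The two ingredients I would import are the standard normal-coordinate Taylor expansions of the metric $g_{\mu\nu}(x)$ and of the Christoffel symbols $\Gamma^\mu_{\alpha\beta}(x)$ in terms of $R$, $\nabla R$, $\nabla^2 R$ and quadratic curvature terms at $p$ (the same data underlying Lemma \ref{lem1forma}, available in \cite{PX}); in particular $\Gamma^\mu_{\alpha\beta}(x)$ is a series beginning at order $|x|\sim\rho$.

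First I would pin down the inward unit normal $\mathring{N}$ to the required order. Starting from the Euclidean guess $\mathring{N}=-\Theta$, I impose $g(\mathring{N},Z_i)=0$ and $g(\mathring{N},\mathring{N})=1$ and solve order by order in $\rho$. The crucial point is the \emph{tangential} correction: orthogonality to $Z_i$ forces $\mathring{N}=-\Theta-w^k\Theta_k+\dots$ (with $w^k=g^{ki}_{S^2}w_i$), and when this correction is paired against the $(1-w)\partial_i\partial_j\Theta$ part of $\partial_i\partial_j F$, the identity $\Theta_k\cdot\partial_i\partial_j\Theta=\Gamma^l_{ij}g_{kl}$ converts the bare coordinate second derivative $w_{ij}$ into the covariant $(Hess_{S^2}w)_{ij}$. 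This is exactly how the term $\rho(Hess_{S^2}w)_{ij}$ is produced, alongside the leading $\rho(1-w)g(\Theta_i,\Theta_j)$; the residual products are of order $w^2$ and are absorbed into $\rho\,Q_p^{(2)}(w)$.

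Next I would substitute the expansions of $g$, $\Gamma$, $Z_i$ and $\mathring{N}$ into the coordinate formula above and collect by powers of $\rho$. The curvature terms come from $\Gamma^\mu_{\alpha\beta}(F)\,Z_i^\alpha Z_j^\beta$: since $\Gamma\sim\rho$ and $Z_iZ_j\sim\rho^2$, the product contracted with $\mathring{N}\approx-\Theta$ yields $\tfrac{2}{3}R(0i0j)\rho^3(1-w)^3$, and the higher Christoffel and metric coefficients give the $\tfrac{5}{12}\nabla_0R(0i0j)\rho^4$ and $\tfrac{3}{20}\nabla_{00}R(0i0j)\rho^5$ contributions. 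The quadratic term $\tfrac{2}{15}R(0i0\mu)R(0j0\mu)\rho^5$ arises by crossing a first-order curvature correction of $\mathring{N}$ (or of $g_{\mu\nu}$) with a first-order Christoffel correction. All remaining pieces that are either multiplied by $w_k$ together with a curvature factor, or nonlinear in $w$, or of order $\rho^6$, are organized into $-\rho B^k_{ij}w_k$ (with $B^k_{ij}=O(\rho^2)+L_p(w)+Q^{(2)}_p(w)$), $\rho^5 L_p(w)$, $\rho^2 Q_p^{(2)}(w)$ and $O_p(\rho^6)$, using the estimates in the ``notations and conventions''.

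The main obstacle is not conceptual but the bookkeeping: tracking the curvature contributions to $\mathring{N}$ and to $g_{\mu\nu}$ simultaneously up to order $\rho^5$, and confirming that the rational coefficients $\tfrac{2}{3},\tfrac{5}{12},\tfrac{3}{20},\tfrac{2}{15}$ come out correctly after all the $\Theta$-contractions. Two consistency checks guide the computation: setting $R\equiv0$ and $w\equiv0$ must return $\mathring{h}_{ij}=\rho\,g(\Theta_i,\Theta_j)$, the second fundamental form of the Euclidean sphere of radius $\rho$; and the $w$-linear part must reduce to $\rho(Hess_{S^2}w)_{ij}$, compatible with the second-variation operator $\tfrac12\triangle_{S^2}(\triangle_{S^2}+2)$ recorded in the Introduction.
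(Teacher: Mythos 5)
Your outline is sound and would reproduce the Lemma, but it follows a genuinely different (and substantially heavier) route than the paper. The paper does not expand $\nabla_{Z_i}Z_j$ in normal coordinates at all: it imports from \cite{PX} the structural identity
$$\mathring{\tilde{h}}_{ij} \;=\; \frac{1}{2(1-w)}\,\partial_\rho\mathring{g}_{ij}\;-\;\frac{\rho}{1-w}\,w_i w_j\;+\;\rho\,(Hess_{\mathring{g}}w)_{ij},$$
where $\mathring{N}=\mathring{\tilde{N}}(1-\rho^2\mathring{g}^{ij}w_iw_j)^{-1/2}$, so that the entire curvature content of $\mathring{h}_{ij}$ is obtained by differentiating the already-established expansion of $\mathring{g}_{ij}$ (Lemma \ref{lem1forma}) with respect to $\rho$. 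This makes the rational coefficients transparent: $\tfrac13\cdot\tfrac42=\tfrac23$, $\tfrac16\cdot\tfrac52=\tfrac{5}{12}$, $\tfrac{1}{20}\cdot 3=\tfrac{3}{20}$, $\tfrac{2}{45}\cdot 3=\tfrac{2}{15}$, with no need to expand the Christoffel symbols to order $\rho^4$ or to track curvature corrections of the normal vector; the only remaining work is comparing $Hess_{\mathring{g}}$ with $Hess_{S^2}$ (which produces the $-\rho B^k_{ij}w_k$ term) and noting that the normalization factor contributes only $\rho^2Q_p^{(2)}(w)$. Your direct Gauss-formula computation is a legitimate alternative and your treatment of the $w$-linear part (the tangential correction $-w^k\Theta_k$ of the normal converting $w_{ij}$ into $(Hess_{S^2}w)_{ij}$, with normalization errors quadratic in $w$) is exactly right; what it buys is independence from the \cite{PX} identity, at the cost of leaving the verification of the four curvature coefficients as unperformed ``bookkeeping'' --- which is precisely the part the paper's method renders automatic. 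If you pursue your route, be aware that the $\tfrac{2}{15}R(0i0\mu)R(0j0\mu)$ term receives contributions not only from the cross terms you mention but also from the quadratic-in-curvature coefficient of the normal-coordinate expansion of $g_{\mu\nu}$ itself, so the accounting there is more delicate than your sketch suggests.
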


\begin{proof}
In \cite{PX} the authors consider $\mathring {\tilde{N}}$ such that the normal unit vector $\mathring{N}$ has the form $\mathring {N}=\mathring {\tilde{N}} (1 - \rho^2 \mathring{g}^{ij} w_i w_j)^{-1/2}$. They set 
$$\mathring{\tilde{h}}_{ij} = -g(\nabla_{Z_i}\mathring {\tilde{N}},Z_j )$$
and they derive the following formula  
$$\mathring{\tilde{h}}_{ij} = \frac{1}{2(1-w)}  \partial_\rho {\mathring{g}} - \frac{1}{1-w} \rho dw\otimes dw + \rho Hess_{\mathring{g}}w.$$

Using Lemma \ref{lem1forma} the first summand is:
\begin{eqnarray}
 \frac{1}{2(1-w)} \partial_\rho {\mathring{g}}&=& g(\Theta _i, \Theta _j) \rho (1-w) + \frac{\rho}{1-w} w_i w_j + \frac{2}{3} R(0i0j) \rho ^3 (1-w)^3+\frac{5}{12} \nabla_0 R(0i0j)\rho ^4 (1-w)^4 \nonumber \\
&&+\frac{3}{20}\Big[\nabla_{00} R(0i0j)+\frac{2}{15} R(0i0\mu) R(0j0\mu) \Big] \rho ^5 (1-w)^5 + O_p(\rho ^6)+\rho ^6 L_p(w)+\rho^6 Q_p^{(2)}(w)\nonumber 
\end{eqnarray}

The third summand is:

$$ \rho (Hess_{\mathring{g}}w)_{ij} = \rho (w_{ij}-\mathring{\Gamma}^k_{ij} w_k).$$

With a direct computation it is easy to check that

\begin{equation}
\mathring{\Gamma}^k_{ij} = \Gamma^k_{ij}+ B^k_{ij} 
\end{equation}  

where  $\Gamma^k_{ij}$ are the Christoffel symbols of $S^2$ in polar coordinates and
$ B^k_{ij}$ are functions on $S^2$ of the form 
$$B^k_{ij}=O(\rho^2)+L_p(w)+Q^{(2)}_p(w).$$ 

Hence
$$ \rho (Hess_{\mathring{g}}w)_{ij} =  \rho (Hess_{S^2}w)_{ij}- \rho B^k_{ij} w_k. $$

Observing that the second summand simplifies with an adding of the first summand  and that 
$$\mathring{h}_{ij}=- g( \nabla _{Z_i} \mathring{N}, Z_j)=- g( \nabla _{Z_i}\mathring {\tilde{N}} (1 - \rho^2 \mathring{g}^{ij} w_i w_j)^{-1/2}, Z_j)=\mathring{\tilde{h}}_{ij}+\rho^2 Q_p^{(2)}(w) $$
we get the desired formula.
\end{proof}

Recall that the mean curvature $H$ is the trace of $\mathring{h}$ with respect to the metric $\mathring{g}$: $H=\mathring{h}_{ij}\mathring{g}^{ij}$. Collecting the two previous Lemmas we obtain the following

\begin{lem} \label{lemH}
The mean curvature of the hypersurface $S_{p,\rho}(w)$ can be expanded as 

\begin{eqnarray}
H &=& \frac{2}{\rho}+\frac{1}{\rho}(2+ \triangle _{S^2} ) w  +\frac{1}{\rho} \big[ 2w(w+\triangle_{S^2}w) - g^{ij}_{S^2} w_i w_j\big]-\frac{1}{\rho} g^{ij}_{S^2}B^k_{ij} w_k \nonumber\\
&& - \frac{1}{3} \big[g^{il}_{S^2} R(0l0k) g^{kj}_{S^2} (Hess_{S^2}w)_{ij}+ Ric_p(\Theta, \Theta) (1-w)\big] \rho  + \frac{1}{4} g^{ij}_{S^2} \nabla_0 R(0i0j) \rho ^2 (1-w)^2 \nonumber\\
&&+\Big[\frac{1}{10}g^{ij}_{S^2} \nabla_{00} R(0i0j)+ \frac{4}{45} g^{ij}_{S^2} R(0i0\mu) R(0j0\mu) - \frac{1}{9} g^{il}_{S^2} R(0l0k) g^{kn}_{S^2} R(0n0i) \Big] \rho ^3 (1-w)^3 \nonumber \\
&&+ O_p(\rho ^4)+\rho^2 L_p(w)+ Q_p^{(2)}(w)+ \frac{1}{\rho} L_p(w) Q_p^{(2)}(w) \nonumber 
\end{eqnarray}
where $Ric_p$ is the Ricci tensor computed at $p$.
\end{lem}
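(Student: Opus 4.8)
The plan is to compute the mean curvature $H$ directly from its definition $H=\mathring{h}_{ij}\mathring{g}^{ij}$ by combining the expansion of $\mathring{h}_{ij}$ from Lemma \ref{lem2forma} with the expansion of the inverse metric $\mathring{g}^{ij}$. The first step is to invert the first fundamental form. From Lemma \ref{lem1forma}, the metric has the schematic form $\mathring{g}_{ij}=\rho^2(1-w)^2\big[g(\Theta_i,\Theta_j)+A_{ij}\big]$ where $A_{ij}$ collects all the higher-order correction terms (the $w_iw_j$ piece, the curvature terms at orders $\rho^2,\rho^3,\rho^4$, and the error terms). I would write $\mathring{g}^{ij}=\rho^{-2}(1-w)^{-2}\big[g^{ij}_{S^2}-A^{ij}+(A^2)^{ij}-\cdots\big]$, expanding the Neumann series far enough that all terms down to order $\rho^3$ (relative to the leading $\frac{2}{\rho}$) are retained, and carefully tracking where the $L_p(w)$, $Q_p^{(2)}(w)$, and $O_p(\rho^d)$ remainders land.

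**Key steps.** After inverting the metric, I would form the contraction $\mathring{h}_{ij}\mathring{g}^{ij}$ term by term, grouping by powers of $\rho$. The leading term $\rho(1-w)g(\Theta_i,\Theta_j)$ contracted against $\rho^{-2}(1-w)^{-2}g^{ij}_{S^2}$ produces $\frac{2}{\rho}$ together with the linearized and quadratic-in-$w$ pieces; the Hessian term $\rho(Hess_{S^2}w)_{ij}$ yields the $\frac{1}{\rho}\triangle_{S^2}w$ contribution, which combines with the metric trace to give the Fredholm-type operator $\frac{1}{\rho}(2+\triangle_{S^2})w$. The curvature contributions at orders $\rho^3,\rho^4,\rho^5$ in $\mathring{h}_{ij}$, when multiplied by $\rho^{-2}$ from $\mathring{g}^{ij}$, drop to orders $\rho,\rho^2,\rho^3$ respectively, matching the curvature terms displayed in the statement. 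Crucially, the cross-terms arising from the interaction of the curvature part of $\mathring{h}_{ij}$ with the $-A^{ij}$ correction in $\mathring{g}^{ij}$ must be computed: these produce the quadratic curvature combinations such as $-\frac{1}{9}g^{il}_{S^2}R(0l0k)g^{kn}_{S^2}R(0n0i)$ at order $\rho^3$ and the Ricci term $Ric_p(\Theta,\Theta)(1-w)$ at order $\rho$, where the identity $g^{ij}_{S^2}R(0i0j)=\sum_i R(\Theta,\Theta_i,\Theta,\Theta_i)=Ric_p(\Theta,\Theta)$ (after normalizing, via \eqref{eqRic}) is used to rewrite the trace of the curvature term.

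**Main obstacle.** The hard part will be the bookkeeping of the error terms and the precise matching of the cross-terms at order $\rho^3$. One must be confident that every product of two expansion pieces either contributes to an explicitly displayed curvature monomial or is absorbed into one of the remainder classes $O_p(\rho^4)$, $\rho^2 L_p(w)$, $Q_p^{(2)}(w)$, or $\frac{1}{\rho}L_p(w)Q_p^{(2)}(w)$. This requires using the composition and product rules for the symbolic classes $L_p^{(a)}$ and $Q_p^{(b)(a)}$ from the notations (e.g.\ that $L_p\cdot L_p$ is $Q_p^{(2)}$, and that multiplying by $\rho^{-2}$ shifts the $\rho$-powers), and keeping track of how the factors $(1-w)^{-2}$ in the inverse metric, when expanded in $w$, feed into these nonlinear remainders. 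The anomalous $\frac{1}{\rho}L_p(w)Q_p^{(2)}(w)$ term in particular signals that the careful source of error is the $\rho^{-1}$ prefactor multiplying products of a linear and a quadratic remainder; I would verify its appearance by isolating the contribution of the $w_iw_j$ term in $\mathring{g}_{ij}$ (hence in $\mathring{g}^{ij}$) against the linear-in-$w$ part of $\mathring{h}_{ij}$.
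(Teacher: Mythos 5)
Your proposal follows essentially the same route as the paper's proof: invert $\mathring{g}_{ij}$ via a Neumann-series expansion of the form $(A+B\rho^2+C\rho^3+D\rho^4)^{-1}$, contract with the expansion of $\mathring{h}_{ij}$ from Lemma \ref{lem2forma}, Taylor-expand the $(1-w)^{-k}$ factors to feed the nonlinearities into the $L_p$/$Q_p^{(2)}$ remainder classes, and use the trace identity relating $g^{il}_{S^2}R(0l0k)g^{kj}_{S^2}$ to the Ricci curvature. The one detail to watch is the sign: with the paper's curvature conventions the identity \eqref{eq:R-Ric} gives $g(\Theta_i,\Theta_j)\,g^{il}_{S^2}R(0l0k)g^{kj}_{S^2}=-Ric_p(\Theta,\Theta)$ (not $+Ric_p(\Theta,\Theta)$ as written in your sketch), which is exactly what produces the displayed term $-\frac{1}{3}Ric_p(\Theta,\Theta)(1-w)\rho$.
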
  

\begin{proof}
First let us find an expansion of $\mathring{g}^{ij}$. Given an invertible matrix $A$,  
$$(A+B\rho^2 +C\rho ^3+D \rho^4)^{-1}=A^{-1}-A^{-1}BA^{-1}\rho ^2-A^{-1}CA^{-1} \rho^3-A^{-1}DA^{-1} \rho ^4 + A^{-1} B A^{-1} BA^{-1} \rho^4 +O(\rho ^5)$$
so we get
\begin{eqnarray}
\mathring{g}^{ij}&=& \frac{1}{\rho ^2 (1-w)^2} \Big\{g^{ij}_{S^2}-g^{il}_{S^2} (1-w)^{-2} w_l w_k  g^{kj}_{S^2}-\frac{1}{3}g^{il}_{S^2}R(0l0k)  g^{kj}_{S^2}\rho ^2 (1-w)^2-\frac{1}{6}g^{il}_{S^2} \nabla_0 R(0l0k)   g^{kj}_{S^2}\rho ^3 (1-w)^3 \nonumber\\
&&-g^{il}_{S^2} \Big[\frac{1}{20} \nabla_{00} R(0l0k)+\frac {2}{45} R(0l0\mu)R(0k0\mu)\Big]  g^{kj}_{S^2}\rho ^4 (1-w)^4 +\frac{1}{9} g^{il}_{S^2} R(0l0k) g^{kn}_{S^2} R(0n0q)  g^{qj}_{S^2}\rho ^4 (1-w)^4 \Big\} \nonumber \\
&&+ O_p(\rho ^3)+\rho ^3 L_p(w)+ \rho^2 Q_p^{(2)}(w)+\frac{1}{\rho^2} (Dw)^4. \nonumber\\
 \label{eq:ginv}
\end{eqnarray}
Where $(Dw)^4$ is an homogeneous polynomial in the first derivatives $w_i$ of order four.  
Putting together \eqref{eq:ginv} and Lemma \ref{lem2forma} it is easy to evaluate $H=\mathring{h}_{ij} \mathring{g}^{ij}$ just using the following observations:

\begin{eqnarray}
\bullet  \rho \mathring{g}^{ij} (Hess_{S^2}w)_{ij}&=& \big( \frac{1}{\rho}  (1+2w) g^{ij}_{S^2} - \frac{\rho}{3}g^{il}_{S^2} R(0l0k) g^{kj}_{S^2} +\frac{1}{\rho}Q(w)+O(\rho^2)+ \rho L(w)\big)(Hess_{S^2}w)_{ij} \nonumber\\
&=& \frac{1}{\rho}  (1+2w) \triangle _{S^2} w-  \frac{\rho}{3}g^{il}_{S^2} R(0l0k) g^{kj}_{S^2} (Hess_{S^2}w)_{ij}+ \rho^2 L(w)+\rho Q(w)+ \frac{1}{\rho} L(w)Q(w) \nonumber
\end{eqnarray}
$\bullet$ with a Taylor expansion
$$\frac{2}{\rho (1-w)}= \frac{2(1+w+w^2)}{\rho}+\frac{1}{\rho}w Q(w) ,$$
$$\frac{1}{\rho (1-w)^{3}} g^{ij}_{S^2} w_i w_j=\frac{1}{\rho} g^{ij}_{S^2} w_i w_j+\frac{1}{\rho}w Q(w)  $$
$\bullet$ finally, recalling our notations, \eqref{eqRic} and that $\{\Theta,\frac{\Theta_1}{\|\Theta_1\|},\frac{\Theta_2}{\|\Theta_2\|}  \} $ form an orthonormal base of $T_pM$ 
\begin{equation}\label{eq:R-Ric}
g(\Theta _i, \Theta _j)  g^{il}_{S^2} R(0l0k) g^{kj}_{S^2}=\delta_j ^l g(R_p(\Theta ,\Theta _l)\Theta ,\Theta _k) g^{kj}_{S^2}= g(R_p(\Theta ,\Theta _i)\Theta ,\Theta _j) g^{ij}_{S^2} = -Ric_p(\Theta,\Theta). 
\end{equation}

\end{proof}

Now we compute  $H^2$:
\begin{lem}\label{lemH2}
The square of the mean curvature $H^2$ on $S_{p,\rho}(w)$ can be expanded as
\begin{eqnarray}
H^2 &=& \frac{4}{\rho ^2}+ \frac{4}{\rho ^2}(2+ \bigtriangleup _{S^2})w + \frac{1}{\rho^2}(12 w^2+12w\bigtriangleup _{S^2}w +(\bigtriangleup _{S^2}w)^2-4  g^{ij}_{S^2} w_i w_j)-\frac{4}{\rho^2} g^{ij}_{S^2} B^k_{ij} w_k \nonumber \\
&& - \frac{4}{3}g^{il}_{S^2} R(0l0k) g^{kj}_{S^2} (Hess_{S^2}w)_{ij} -\frac{2}{3} Ric_p(\Theta, \Theta) (2+\triangle_{S^2}w) + [ g^{ij}_{S^2} \nabla _0 R(0i0j)] \rho  \nonumber \\
&&+\Big[ \frac{2}{5}g^{ij}_{S^2} \nabla_{00}R(0i0j)+ \frac{16}{45} g^{ij}_{S^2} R(0i0\mu) R(0j0\mu)- \frac{4}{9} g^{il}_{S^2} R(0l0k) g^{kn}_{S^2} R(0n0i)+ \frac{1}{9} Ric_p(\Theta, \Theta) Ric_p(\Theta, \Theta) \Big] \rho ^2  \nonumber \\
&&+ O_p(\rho ^3)+\rho L_p(w)+ \frac{1}{\rho} Q_p^{(2)}(w)+ \frac{1}{\rho^2} L_p(w) Q_p^{(2)}(w). \nonumber 
\end{eqnarray}
\end{lem}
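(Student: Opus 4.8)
The plan is to obtain $H^2$ by directly squaring the expansion of $H$ furnished by Lemma \ref{lemH}, collecting the result power by power in $\rho$. The essential structural observation is that the expansion of $H$ carries no term of order $\rho^0$: schematically
\[
H=\frac{1}{\rho}a_{-1}+\rho\,a_{1}+\rho^2 a_{2}+\rho^3 a_{3}+\Big(O_p(\rho^4)+\rho^2 L_p(w)+Q_p^{(2)}(w)+\tfrac{1}{\rho}L_p(w)Q_p^{(2)}(w)\Big),
\]
where $a_{-1}=2+(2+\triangle)w+[2w(w+\triangle w)-g^{ij}_{S^2}w_iw_j]-g^{ij}_{S^2}B^k_{ij}w_k$ is the coefficient of $\rho^{-1}$, and $a_{1},a_{2},a_{3}$ are the curvature blocks multiplying $\rho,\rho^2,\rho^3$. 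Squaring then produces
\[
H^2=\frac{1}{\rho^2}a_{-1}^2+2a_{-1}a_{1}+2\rho\,a_{-1}a_{2}+\rho^2\big(2a_{-1}a_{3}+a_{1}^2\big)+(\text{error}),
\]
so each displayed block of the statement is identified with one of these products.

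First I would expand $a_{-1}^2$, which yields the $\rho^{-2}$ line: the constant $4$, the linear term $4(2+\triangle)w$, the quadratic combination $(2w+\triangle w)^2+4[2w(w+\triangle w)-g^{ij}_{S^2}w_iw_j]=12w^2+12w\triangle w+(\triangle w)^2-4g^{ij}_{S^2}w_iw_j$, and the term $-4g^{ij}_{S^2}B^k_{ij}w_k$, with all cubic and quartic contributions in $w$ absorbed into $\frac{1}{\rho^2}L_p(w)Q_p^{(2)}(w)$. Next I would compute the cross term $2a_{-1}a_{1}$, keeping $a_{-1}=2+(2+\triangle)w+\cdots$ and $a_{1}=-\frac{1}{3}[g^{il}_{S^2}R(0l0k)g^{kj}_{S^2}(Hess_{S^2}w)_{ij}+Ric_p(\Theta,\Theta)(1-w)]$: the constant and linear pieces recombine precisely into $-\frac{4}{3}g^{il}_{S^2}R(0l0k)g^{kj}_{S^2}(Hess_{S^2}w)_{ij}-\frac{2}{3}Ric_p(\Theta,\Theta)(2+\triangle w)$, the point being that the two linear-in-$w$ contributions $\frac{4}{3}Ric_p\,w$ (from $2\cdot2\cdot\frac{1}{3}Ric_p\,w$) and $-\frac{4}{3}Ric_p\,w$ (from $2(2+\triangle)w\cdot(-\frac{1}{3}Ric_p)$) cancel, leaving only $-\frac{2}{3}Ric_p\,\triangle w$. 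The order-$\rho$ line follows from $2a_{-1}a_{2}$ with leading factor $2\cdot2\cdot\frac{1}{4}g^{ij}_{S^2}\nabla_0R(0i0j)=g^{ij}_{S^2}\nabla_0R(0i0j)$, and the order-$\rho^2$ line from $4a_{3}+a_{1}^2$: the factor $4$ turns the coefficients of $a_{3}$ into $\frac{2}{5},\frac{16}{45},-\frac{4}{9}$, while $a_{1}^2$ contributes its leading square $\frac{1}{9}Ric_p(\Theta,\Theta)^2$.

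The only genuinely delicate point is the bookkeeping of the error terms. Using the multiplicative conventions of item $5)$ of the notations (namely $L_p\cdot L_p$ and $L_p\cdot(2+\triangle)w$ lie in $Q_p^{(2)}$, $a_{-1}\cdot Q_p^{(2)}\in Q_p^{(2)}$ up to a bounded factor, and the $\rho$-scalings compose additively), I would check that every product not retained above lands in the claimed class $O_p(\rho^3)+\rho L_p(w)+\frac{1}{\rho}Q_p^{(2)}(w)+\frac{1}{\rho^2}L_p(w)Q_p^{(2)}(w)$. In particular the self-product of the $\frac{1}{\rho}$-error $\frac{1}{\rho}L_p(w)Q_p^{(2)}(w)$ and its cross term with $\frac{1}{\rho}a_{-1}$ both reproduce $\frac{1}{\rho^2}L_p(w)Q_p^{(2)}(w)$; the cross terms of the $\rho^{-1}$ block with the $\rho$- and $\rho^2$-curvature blocks carrying a $w$-factor are of order $\rho L_p(w)$ or lower; and the $w$-dependent part of $a_{1}^2$, being quadratic in the curvature and at least linear in $w$, is subsumed in $\rho^2 L_p(w)\subset \rho L_p(w)$ for $\rho\in(0,1)$ after the $\rho^2$ weight. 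This verification, though entirely routine, is where all the care is needed, since a misplaced power of $\rho$ would corrupt the degenerate expansion exploited later; once it is done, matching the four principal blocks above completes the proof.
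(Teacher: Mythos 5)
Your proposal is correct and follows exactly the paper's own (one-line) proof, which simply squares the expansion of $H$ from Lemma \ref{lemH}; your block-by-block bookkeeping, including the cancellation of the two $\frac{4}{3}Ric_p(\Theta,\Theta)w$ contributions and the classification of the remainder terms, checks out against the stated formula.
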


\begin{proof} Just compute the square of $H$ expressed as in Lemma \ref{lemH}. 
\end{proof}

\begin{lem}\label{lem:detg}
The determinant of the first fundamental form of $S_{p,\rho}(w)$ can be expanded as 
\begin{eqnarray}
det[\mathring{g}]&=& \| \Theta _2 \| ^2 \rho^4 \Big\{ (1-w)^4 + (g_{S^2}^{ij} w_i w_j)-\frac {1}{3} Ric_p (\Theta ,\Theta ) \rho ^2 (1-w)^6 
 + \frac{1}{6} g^{ij}_{S^2} \nabla _0 R(0i0j) \rho ^3 (1-w)^7 \nonumber \\
&&+ \Big[ \frac{1}{20} g^{ij}_{S^2} \nabla _{00} R(0i0j)+\frac {2}{45} g^{ij}_{S^2} R(0i0\mu)R(0j0\mu)+ \frac {1}{9} R(0101)R(0\bar{2}0\bar{2}) -R(010\bar{2})^2\Big]\rho ^4 (1-w)^8 \Big \} \nonumber \\
&&  +O_p(\rho ^9)+\rho ^9 L_p(w)+\rho ^6 Q_p^{(2)}(w)+\rho^4 L_p(w) Q_p^{(2)}(w) \nonumber  
\end{eqnarray}
where recall that $R(0101)= g(R_p(\Theta ,\Theta _1)\Theta ,\Theta _1)$, $R(0\bar{2}0\bar{2})= g(R_p(\Theta ,\bar{\Theta} _2)\Theta ,\bar{\Theta} _2)$, 
$R(010\bar{2})=-g(R_p(\Theta ,\Theta _1)\Theta ,\bar{\Theta} _2)$ and $\bar{\Theta} _2$ is $\Theta _2$ normalized: $\bar{\Theta} _2:=\frac{\Theta _2}{|\Theta _2|} $.
\end{lem}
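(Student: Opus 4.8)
The plan is to read off everything from Lemma \ref{lem1forma}. Writing $M_{ij}:=(1-w)^{-2}\rho^{-2}\mathring{g}_{ij}$ for the rescaled first fundamental form, one has the factorization $\det[\mathring{g}]=\rho^4(1-w)^4\det[M_{ij}]$, so it suffices to expand the determinant of the symmetric $2\times 2$ matrix $M_{ij}$ up to order $\rho^4$ and then multiply back. I would organize the expansion of Lemma \ref{lem1forma} as $M_{ij}=m^{(0)}_{ij}+m^{(2)}_{ij}\rho^2+m^{(3)}_{ij}\rho^3+m^{(4)}_{ij}\rho^4+(\text{tail})$, where $m^{(0)}_{ij}=g(\Theta_i,\Theta_j)+(1-w)^{-2}w_iw_j$, the $m^{(n)}_{ij}$ with $n=2,3,4$ collect the explicit curvature terms, and the tail is $O_p(\rho^5)+\rho^5L_p(w)+\rho^5Q_p^{(2)}(w)$.

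Since for a $2\times 2$ matrix $\det[M]=M_{11}M_{22}-M_{12}^2$ is a quadratic form in $M$, it is bilinear, and I would expand it collecting powers of $\rho$. The leading coefficient $\det[m^{(0)}]$ is computed by the rank-one update $\det(a+v\otimes v)=\det(a)\,(1+a^{ij}v_iv_j)$ with $a_{ij}=g(\Theta_i,\Theta_j)$, $\det(a)=\|\Theta_2\|^2$, $a^{ij}=g^{ij}_{S^2}$ and $v_i=(1-w)^{-1}w_i$; after multiplying by $\rho^4(1-w)^4$ this gives $\|\Theta_2\|^2[(1-w)^4+g^{ij}_{S^2}w_iw_j]$ up to cubic-and-higher terms in $w$ that go into $\rho^4L_p(w)Q_p^{(2)}(w)$. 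The coefficients of $\rho^2,\rho^3,\rho^4$ coming from the cross terms of $m^{(0)}$ with $m^{(n)}$ are handled by the cofactor identity for the $a$-part, $a_{11}B_{22}+a_{22}B_{11}-2a_{12}B_{12}=\det(a)\,g^{ij}_{S^2}B_{ij}$. For $n=2$ the trace $g^{ij}_{S^2}R(0i0j)$ is rewritten as $-Ric_p(\Theta,\Theta)$ via \eqref{eq:R-Ric}, producing exactly the $-\tfrac13 Ric_p(\Theta,\Theta)\rho^2(1-w)^6$ term, while $n=3,4$ reproduce the $\nabla_0R$, the $\nabla_{00}R$ and the $R(0i0\mu)R(0j0\mu)$ terms.

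The only genuinely quadratic-in-curvature contribution at order $\rho^4$ is $\det[m^{(2)}]=\tfrac19(1-w)^4\det[R(0i0j)]$. Here the delicate step is to rewrite the bare determinant $\det[R(0i0j)]=R(0101)R(0202)-R(0102)^2$ in terms of the normalized invariants, using $R(0202)=\|\Theta_2\|^2R(0\bar{2}0\bar{2})$, $R(0102)=\|\Theta_2\|R(010\bar{2})$ and $\|\Theta_1\|=1$, which extracts the common factor $\|\Theta_2\|^2$ and leaves the combination $R(0101)R(0\bar{2}0\bar{2})-R(010\bar{2})^2$ inside the braces. This is where one must be most careful with the $\|\Theta_2\|$ bookkeeping.

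Finally I would collect everything, factor out $\|\Theta_2\|^2\rho^4$, and check that the error contributions land in the claimed remainder classes: products of $m^{(0)}$ with the tail of Lemma \ref{lem1forma} give $O_p(\rho^9)+\rho^9L_p(w)$; the cross term of the $(1-w)^{-2}w_iw_j$ part of $m^{(0)}$ with $m^{(2)}$ gives the $\rho^6Q_p^{(2)}(w)$ term; and the nonlinear-in-$w$ pieces created along the way give $\rho^4L_p(w)Q_p^{(2)}(w)$ (recall $\rho\in(0,1)$, so higher powers of $\rho$ are absorbed into lower ones). The hard part will be precisely this last bookkeeping: one must verify that forming the quadratic determinant map lets no nonlinear-in-$w$ or higher-curvature term contaminate the explicit coefficients, and that the $\rho^4$ prefactor shifts each remainder class to the stated order. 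The explicit algebra is routine; the care lies in the normalization of the curvature determinant and in tracking the $L_p$ and $Q_p^{(2)}$ remainders through a quadratic operation.
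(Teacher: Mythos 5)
Your proposal is correct and follows exactly the computation the paper intends: its proof of this lemma is the one-line instruction to compute $\det[\mathring{g}]$ from Lemma \ref{lem1forma} using \eqref{eq:R-Ric} and $g^{ij}_{S^2}=\mathrm{diag}(1,1/\|\Theta_2\|^2)$, which is precisely your expansion of the quadratic determinant map via the cofactor/adjugate identity, the rank-one update for the $w_iw_j$ block, and the extraction of $\|\Theta_2\|^2$ from $\det[R(0i0j)]$. Your bookkeeping of the remainder classes (including absorbing the $(1-w)^2$ discrepancy in the $g^{ij}_{S^2}w_iw_j$ term into $\rho^4L_p(w)Q_p^{(2)}(w)$ and the $v\otimes v$ cross term into $\rho^6Q_p^{(2)}(w)$) matches the stated error terms, so nothing further is needed.
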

\begin{proof}
Just compute $det[\mathring{g}]$ using Lemma \ref{lem1forma}, formula \eqref{eq:R-Ric} and observing that $g^{ij}_{S^2}=diag(1,1/\|\Theta_2\|^2)$
\end{proof}

\begin{lem}\label{lem:deth}
The determinant of the second fundamental form of $S_{p,\rho}(w)$ has the following expansion:
\begin{eqnarray}
det[\mathring{h}]&=&  \rho ^2 (1-w)^2 \| \Theta _2\| ^2 + \rho ^2 \| \Theta _2\| ^2 \triangle _{S^2}w (1-w)+\rho ^2 \big[(Hess_{S^2}w)_{11} (Hess_{S^2}w)_{22} - (Hess_{S^2}w)_{12} ^2 \big] \nonumber \\ 
&&+ \frac{2}{3} \rho^4 \big[R(0101) (Hess_{S^2}w)_{22} + R(0202)(Hess_{S^2}w)_{11}-2 R(0102)(Hess_{S^2}w)_{12}-Ric_p(\Theta , \Theta )(1-w)^4  \| \Theta _2\| ^2 \big]\nonumber \\
&&+ \| \Theta _2 \|^ 2\Big[\frac{5}{12} g^{ij}_{S^2} \nabla _0 R(0i0j) \rho ^5 (1-w)^5 +\frac{3}{20} g^{ij}_{S^2} \nabla _{00}R(0i0j) \rho ^6 (1-w)^6+ \frac{2}{15} g^{ij}_{S^2} R(0i0\mu) R(0j0\mu) \rho ^6 (1-w)^6 \Big]\nonumber \\
&&+ \frac{4}{9}\rho ^6 (1-w)^6 \| \Theta _2 \| ^2 \big[R(0101)R(0\bar{2}0\bar{2})-R(010\bar{2})^2\big]-\|\Theta_2\|^2 \rho^2 g^{ij}_{S^2} B_{ij}^k w_k+ O_p(\rho ^7)+\rho ^5 L_p(w)+\rho ^3 Q_p^{(2)}(w).\nonumber
\end{eqnarray} 
\end{lem}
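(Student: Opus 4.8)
The plan is to read off $\det[\mathring{h}]=\mathring{h}_{11}\mathring{h}_{22}-\mathring{h}_{12}^2$ from the expansion of Lemma \ref{lem2forma}, organising the computation through the bilinear decomposition of the $2\times 2$ determinant. Setting, for symmetric matrices, $\mathrm{mix}(A,B):=A_{11}B_{22}+A_{22}B_{11}-2A_{12}B_{12}$, so that $\det(A+B)=\det A+\det B+\mathrm{mix}(A,B)$, I would first treat the leading part $\rho(1-w)g(\Theta_i,\Theta_j)+\rho(Hess_{S^2}w)_{ij}$. Using $g(\Theta_i,\Theta_j)=\mathrm{diag}(1,\|\Theta_2\|^2)$ and $\triangle_{S^2}w=(Hess_{S^2}w)_{11}+\|\Theta_2\|^{-2}(Hess_{S^2}w)_{22}$, its determinant equals $\rho^2\big[(1-w)^2\|\Theta_2\|^2+(1-w)\|\Theta_2\|^2\triangle_{S^2}w+\big((Hess_{S^2}w)_{11}(Hess_{S^2}w)_{22}-(Hess_{S^2}w)_{12}^2\big)\big]$, which is exactly the first line of the statement.

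The whole computation is then driven by the elementary identity, valid for any symmetric $T$,
$$g(\Theta_1,\Theta_1)\,T_{22}+g(\Theta_2,\Theta_2)\,T_{11}-2g(\Theta_1,\Theta_2)\,T_{12}=\|\Theta_2\|^2\,g^{ij}_{S^2}T_{ij},$$
a direct generalisation of \eqref{eq:R-Ric}. Mixing the round part $(1-w)g(\Theta_i,\Theta_j)$ with the $\rho^3$, $\rho^4$ and $\rho^5$ curvature entries of Lemma \ref{lem2forma} converts each into $\|\Theta_2\|^2 g^{ij}_{S^2}(\cdot)$; in the $\rho^3$ case \eqref{eq:R-Ric} replaces $g^{ij}_{S^2}R(0i0j)$ by $-Ric_p(\Theta,\Theta)$, giving the $-\tfrac23 Ric_p(\Theta,\Theta)(1-w)^4\|\Theta_2\|^2$ term, while the $\rho^4$ and $\rho^5$ entries yield the $\nabla_0 R$, $\nabla_{00}R$ and $R(0i0\mu)R(0j0\mu)$ lines. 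Mixing instead the Hessian part with the $\tfrac23 R(0i0j)\rho^3(1-w)^3$ entry produces the untraced combination $\tfrac23\rho^4[R(0101)(Hess_{S^2}w)_{22}+R(0202)(Hess_{S^2}w)_{11}-2R(0102)(Hess_{S^2}w)_{12}]$, the factor $(1-w)^3-1$ being swept into $\rho^3 Q_p^{(2)}(w)$; and mixing with $-\rho B_{ij}^k w_k$ gives, after the same identity, $-\|\Theta_2\|^2\rho^2 g^{ij}_{S^2}B_{ij}^k w_k$.

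The only genuinely quadratic-in-curvature contribution comes from $\det N$ (where $N$ collects all the corrections), whose lowest order is $\det[\tfrac23 R(0i0j)\rho^3(1-w)^3]=\tfrac49\rho^6(1-w)^6\det[R(0i0j)]$. Here I would pass to the normalised frame via $R(0202)=\|\Theta_2\|^2 R(0\bar{2}0\bar{2})$ and $R(0102)^2=\|\Theta_2\|^2 R(010\bar{2})^2$ to rewrite $\det[R(0i0j)]=\|\Theta_2\|^2[R(0101)R(0\bar{2}0\bar{2})-R(010\bar{2})^2]$, which is precisely the stated $\tfrac49$-term. All remaining pieces of $\det N$ and all further mixings are of order $\rho^7$ or higher, or carry an extra factor of $w$.

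The routine but delicate part, and the only place real care is needed, is checking that everything not listed explicitly collapses into $O_p(\rho^7)+\rho^5 L_p(w)+\rho^3 Q_p^{(2)}(w)$: the higher powers of $(1-w)$ beyond the leading one, every mixing involving the remainders $O_p(\rho^6)$, $\rho^5 L_p(w)$, $\rho^2 Q_p^{(2)}(w)$ of Lemma \ref{lem2forma}, and all products of two correction entries. Each such product carries either an extra power of $\rho$ or an extra factor of $w$ (and its derivatives) compared with the explicit terms, so by the multiplicative estimates defining $L_p^{(a)}$ and $Q_p^{(b)(a)}$ in the notations it lands in the claimed remainder classes; tracking the lowest surviving power of $\rho$ in each class is the only bookkeeping one must be scrupulous about.
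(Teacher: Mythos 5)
Your proposal is correct and follows the same route as the paper, whose entire proof of this lemma is the one-line instruction to ``just compute the determinant of $\mathring{h}_{ij}$ expressed as in Lemma \ref{lem2forma} using the same tricks of the previous Lemmas.'' Your systematic organisation via $\det(A+B)=\det A+\det B+\mathrm{mix}(A,B)$, the trace identity $g(\Theta_1,\Theta_1)T_{22}+g(\Theta_2,\Theta_2)T_{11}-2g(\Theta_1,\Theta_2)T_{12}=\|\Theta_2\|^2 g^{ij}_{S^2}T_{ij}$ together with \eqref{eq:R-Ric}, and the passage to the normalised frame for the $\tfrac{4}{9}$-term reproduce every explicit term of the statement and correctly account for the remainder classes, so it is simply a fully written-out version of the computation the paper leaves implicit.
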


\begin{proof} Just compute the determinant of $\mathring{h}_{ij}$ expressed as in Lemma \ref{lem2forma} using the same tricks of the previous Lemmas.
\end{proof}

\begin{lem}\label{lem:D}
The product of the principal curvatures of $S_{p,\rho}(w)$ 
 
$$D=k_1k_2=\frac {det(\mathring {h})}{det( \mathring {g})}$$

has the following expansion:
\begin{eqnarray}
D&=& \frac{1}{ \rho ^2}(1+2w+ \triangle _{S^2}w+3w\triangle _{S^2}w +3w^2 ) - \frac{1}{\rho^2}g^{ij}_{S^2}w_iw_j +\frac{1}{\|\Theta_2\|^2 \rho ^2}   \big[(Hess_{S^2}w)_{11} (Hess_{S^2}w)_{22} - (Hess_{S^2}w)_{12} ^2 \big] \nonumber \\
&&+ \frac{2}{3\|\Theta_2\|^2 } \Big[ R(0101) (Hess_{S^2}w)_{22} + R(0202) (Hess_{S^2}w)_{11}-2 R(0102) (Hess_{S^2}w)_{12} \Big] \nonumber\\
&&+\frac{1}{3} Ric_p(\Theta,\Theta) (\triangle_{S^2}w-1) + \frac{1}{4} g^{ij}_{S^2} \nabla _0 R(0i0j) \rho (1-w)-\frac{1}{\rho^2} g^{ij}_{S^2} B^k_{ij}w_k \nonumber \\ 
&&  + \Big[\frac{1}{10} g^{ij}_{S^2} \nabla _{00} R(0i0j) +\frac{4}{45} g^{ij}_{S^2} R(0i0\mu) R(0j0\mu)+ \frac{1}{3} [R(0101) R(0\bar{2}0\bar{2})-R(010\bar{2})^2] -\frac {1}{9} Ric_p (\Theta ,\Theta )^2 \Big]\rho ^2 (1-w)^2 \nonumber \\
&&+ O_p(\rho ^3)+\rho L_p(w)+\frac{1}{\rho} Q_p^{(2)}(w) +\frac{1}{\rho^2}L_p(w)  Q_p^{(2)}(w).\nonumber
\end{eqnarray}

\end{lem}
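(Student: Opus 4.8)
The plan is to obtain $D$ directly as the quotient $\det[\mathring{h}]/\det[\mathring{g}]$, feeding in the two expansions already proved in Lemma~\ref{lem:deth} and Lemma~\ref{lem:detg}. Both determinants carry the common scale $\|\Theta_2\|^2\rho^4$ (the numerator starting at order $\rho^2$, the denominator at order $\rho^4$), which accounts for the overall $\rho^{-2}$ prefactor in the statement.

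First I would factor $\det[\mathring{g}]=\|\Theta_2\|^2\rho^4\,G$, where $G=(1-w)^4+g^{ij}_{S^2}w_iw_j-\tfrac13 Ric_p(\Theta,\Theta)\rho^2(1-w)^6+\dots$ is the bracket of Lemma~\ref{lem:detg}, and invert $G$ as a power series in $\rho$ (and in $w$). This is the scalar version of the algebraic device used in the proof of Lemma~\ref{lemH}, namely $(A+B\rho^2+C\rho^3+D\rho^4)^{-1}=A^{-1}-A^{-1}BA^{-1}\rho^2-\dots$, here with $A=(1-w)^4+g^{ij}_{S^2}w_iw_j$; the remaining $w$-dependence of $A^{-1}$ is handled by the Taylor expansion $(1-w)^{-4}=1+4w+10w^2+\dots$. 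Multiplying $1/\det[\mathring{g}]$ by the expansion of $\det[\mathring{h}]$ from Lemma~\ref{lem:deth} and collecting order by order then produces the claimed formula.

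At the lowest orders this is a finite computation: the curvature-free part of the quotient reproduces $\tfrac1{\rho^2}(1+2w+\triangle_{S^2}w+3w^2+3w\triangle_{S^2}w)-\tfrac1{\rho^2}g^{ij}_{S^2}w_iw_j+\tfrac1{\|\Theta_2\|^2\rho^2}[(Hess_{S^2}w)_{11}(Hess_{S^2}w)_{22}-(Hess_{S^2}w)_{12}^2]$, the Hessian-product terms picking up the factor $\|\Theta_2\|^{-2}$ precisely because in Lemma~\ref{lem:deth} they are \emph{not} accompanied by the $\|\Theta_2\|^2$ factored out of the denominator. For the curvature contributions I would invoke the identity \eqref{eq:R-Ric}, $g^{ij}_{S^2}R(0i0j)=-Ric_p(\Theta,\Theta)$, to turn traced Riemann terms into Ricci terms. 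One checks, for instance, that the linear Ricci term $\tfrac13 Ric_p(\Theta,\Theta)(\triangle_{S^2}w-1)$ results from combining the $-\tfrac23 Ric_p(\Theta,\Theta)$ coming from $\det[\mathring{h}]$ with the $+\tfrac13 Ric_p(\Theta,\Theta)$ produced when the $-\tfrac13 Ric_p(\Theta,\Theta)\rho^2$ correction of $\det[\mathring{g}]$ is inverted; the quadratic curvature terms at order $\rho^2$ (including $-\tfrac19 Ric_p(\Theta,\Theta)^2$ and the Gauss-type combination $R(0101)R(0\bar{2}0\bar{2})-R(010\bar{2})^2$) arise in the same way from the order-$\rho^4$ curvature terms of $\det[\mathring{h}]$ together with the quadratic and cross contributions of the denominator inversion.

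The only genuine difficulty is the bookkeeping of the remainder classes. I must verify that every product of error symbols — for example $O_p(\rho^7)$ or $\rho^3 Q_p^{(2)}(w)$ from $\det[\mathring{h}]$ against the leading $\tfrac{1}{\|\Theta_2\|^2\rho^4}$ of the inverse denominator, and the cross products $L_p(w)Q_p^{(2)}(w)$ generated during the inversion — collapses exactly into the advertised remainder $O_p(\rho^3)+\rho L_p(w)+\tfrac1\rho Q_p^{(2)}(w)+\tfrac1{\rho^2}L_p(w)Q_p^{(2)}(w)$. This is governed entirely by the formal algebra of the symbols $L_p^{(a)}$ and $Q_p^{(b)(a)}$ recorded in the notations and by the estimate \eqref{stimaQ}; no geometric input beyond the two determinant expansions is needed, so the argument reduces to careful sign- and coefficient-tracking.
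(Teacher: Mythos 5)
Your proposal is correct and follows essentially the same route as the paper: the author likewise inverts the bracket of Lemma \ref{lem:detg} via the geometric series $\frac{1}{1+x}=1-x+x^2+O(x^3)$, multiplies by the expansion of $\det[\mathring{h}]$ from Lemma \ref{lem:deth}, and collects terms using \eqref{eq:R-Ric} and the remainder calculus for $L_p$ and $Q_p^{(2)}$. Your order-by-order checks (the curvature-free block and the origin of $\tfrac13 Ric_p(\Theta,\Theta)(\triangle_{S^2}w-1)$) match the coefficients in the statement.
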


\begin{proof}
Recalling the expansion $\frac{1}{1+x}=1-x+x^2+O(x^3)$ and Lemma \ref{lem:detg} we get
\begin{eqnarray}
\frac{1}{det[\mathring{g}]}&=&\frac {1}{\| \Theta _2 \| ^2 (1-w)^4 \rho ^4} \quad \{1- (g_{S^2}^{ij} w_i w_j)+\frac {1}{3} Ric_p (\Theta ,\Theta ) \rho ^2 (1-w)^2 - \frac{1}{6} g^{ij}_{S^2} \nabla _0 R(0i0j) \rho ^3 (1-w)3 \nonumber \\
&&- \Big[ \frac{1}{20} g^{ij}_{S^2} \nabla _{00} R(0i0j)+\frac {2}{45} g^{ij}_{S^2} R(0i0\mu)R(0j0\mu)+ \frac {1}{9} R(0101)R(0\bar{2}0\bar{2}) -R(010\bar{2})^2\Big]\rho ^4 (1-w)^8 \nonumber\\
&& +\frac {1}{9} Ric_p (\Theta ,\Theta )^2 \rho ^4 (1-w)^4  +O_p(\rho ^5)+\rho ^5 L_p(w)+\rho ^2 Q_p^{(2)}(w)+L_p(w) Q_p^{(2)}(w)\}.  \nonumber
\end{eqnarray}
Gathering together this formula and the expansion of $det(\mathring{h})$ of Lemma \ref{lem:deth} we can conclude.
\end{proof}

The quantity we have to integrate is $\frac{H^2}{4}-D$; collecting the previous Lemmas we finally get the following 

\begin{pro}\label{lem:H2-D}
The integrand of the conformal Willmore functional has the following expansion:
\begin{eqnarray}
\frac{H^2}{4}-D&=&\frac{1}{ \rho ^2}\Big[\frac{1}{4} (\bigtriangleup _{S^2}w)^2-\frac{1}{\|\Theta_2\|^2} (Hess_{S^2}w)_{11} (Hess_{S^2}w)_{22} + \frac{1}{\|\Theta_2\|^2} (Hess_{S^2}w)_{12} ^2\Big] \nonumber \\ 
&&+  \frac{1}{3\|\Theta_2\|^2} \Big[2 R(0102)(Hess_{S^2}w)_{12}-R(0101)(Hess_{S^2}w)_{22} -R(0\bar{2}0\bar{2}) (Hess_{S^2}w)_{11} \Big]\nonumber \\ 
&&+\frac{1}{9} \rho^2 \Big[ \frac{1}{4} Ric_p(\Theta, \Theta )^2 -R(0101)R(0\bar{2}0\bar{2})+R(010\bar{2})^2 \Big]-\frac{1}{6} Ric_p(\Theta, \Theta)\bigtriangleup _{S^2}w    \nonumber \\
&& + O_p(\rho ^3)+\rho L_p(w)+ \frac{1}{\rho} Q_p^{(2)}(w)+ \frac{1}{\rho^2}L_p(w) Q_p^{(2)}(w) \nonumber
\end{eqnarray}

\end{pro}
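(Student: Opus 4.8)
The plan is to obtain the expansion by subtracting the expansion of $D$ in Lemma~\ref{lem:D} from one quarter of the expansion of $H^2$ in Lemma~\ref{lemH2}, and then to read off the terms that survive. The reason such a subtraction collapses to something far simpler than either summand is the pointwise identity $\frac{H^2}{4}-D=\frac14(k_1+k_2)^2-k_1k_2=\frac14(k_1-k_2)^2$: the integrand is the squared difference of the principal curvatures, a ``traceless'' quantity, so every contribution built from the trace of the second fundamental form must disappear. At the level of the two expansions I therefore expect the following elementary cancellations: the leading terms $\frac1{\rho^2}$ match; the linear terms, both equal to $\frac1{\rho^2}(2w+\triangle_{S^2}w)$, cancel; the quadratic blocks $\frac1{\rho^2}(3w^2+3w\triangle_{S^2}w)$, the gradient terms $\frac1{\rho^2}g^{ij}_{S^2}w_iw_j$, and the terms $\frac1{\rho^2}g^{ij}_{S^2}B^k_{ij}w_k$ cancel identically; and the first-derivative-of-curvature terms at order $\rho$ (those carrying $g^{ij}_{S^2}\nabla_0 R(0i0j)$) agree up to the factor $(1-w)$, so that their difference is $\frac14 g^{ij}_{S^2}\nabla_0 R(0i0j)\rho\,w=\rho\,L_p(w)$ and is absorbed into the remainder.

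Three blocks then survive. At order $\frac1{\rho^2}$ only the linearized Willmore part $\frac1{\rho^2}\big[\frac14(\triangle_{S^2}w)^2-\frac{1}{\|\Theta_2\|^2}\big((Hess_{S^2}w)_{11}(Hess_{S^2}w)_{22}-(Hess_{S^2}w)_{12}^2\big)\big]$ remains, coming from the $(\triangle_{S^2}w)^2$ term of $H^2$ and the Hessian-determinant term of $D$. At order $\rho^0$ I must combine the doubly contracted term $-\frac13 g^{il}_{S^2}R(0l0k)g^{kj}_{S^2}(Hess_{S^2}w)_{ij}$ of $\frac14 H^2$ with the term $\frac{2}{3\|\Theta_2\|^2}\big[R(0101)(Hess_{S^2}w)_{22}+R(0202)(Hess_{S^2}w)_{11}-2R(0102)(Hess_{S^2}w)_{12}\big]$ of $D$ and with the Ricci contributions $-\frac16 Ric_p(\Theta,\Theta)(2+\triangle_{S^2}w)$ and $\frac13 Ric_p(\Theta,\Theta)(\triangle_{S^2}w-1)$. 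Introducing the symmetric endomorphisms $A^i{}_k=g^{il}_{S^2}R(0l0k)$ and $B^i{}_k=g^{il}_{S^2}(Hess_{S^2}w)_{lk}$, so that $g^{il}_{S^2}R(0l0k)g^{kj}_{S^2}(Hess_{S^2}w)_{ij}=\mathrm{tr}(AB)$, with $\mathrm{tr}A=g^{ij}_{S^2}R(0i0j)=-Ric_p(\Theta,\Theta)$ by \eqref{eq:R-Ric} and $\mathrm{tr}B=\triangle_{S^2}w$, I apply the $2\times2$ polarization identity $\mathrm{tr}(AB)=\mathrm{tr}A\,\mathrm{tr}B-\big(\det(A+B)-\det A-\det B\big)$. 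The constant Ricci terms cancel, the trace parts collapse to the single term $-\frac16 Ric_p(\Theta,\Theta)\triangle_{S^2}w$, and the mixed-determinant part reproduces exactly the curvature--Hessian combination $\frac{1}{3\|\Theta_2\|^2}\big[2R(0102)(Hess_{S^2}w)_{12}-R(0101)(Hess_{S^2}w)_{22}-R(0\bar{2}0\bar{2})(Hess_{S^2}w)_{11}\big]$ stated in the Proposition.

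The delicate point, where the precision really matters, is the order-$\rho^2$ block of pure curvature terms. From $\frac14 H^2$ it equals $\rho^2$ times $\frac1{10}g^{ij}_{S^2}\nabla_{00}R(0i0j)+\frac4{45}g^{ij}_{S^2}R(0i0\mu)R(0j0\mu)-\frac19 g^{il}_{S^2}R(0l0k)g^{kn}_{S^2}R(0n0i)+\frac1{36}Ric_p(\Theta,\Theta)^2$, while $D$ contributes $\rho^2$ times $\frac1{10}g^{ij}_{S^2}\nabla_{00}R(0i0j)+\frac4{45}g^{ij}_{S^2}R(0i0\mu)R(0j0\mu)+\frac13\big(R(0101)R(0\bar{2}0\bar{2})-R(010\bar{2})^2\big)-\frac19 Ric_p(\Theta,\Theta)^2$. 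The second-derivative-of-curvature terms and the $R(0i0\mu)R(0j0\mu)$ terms cancel exactly in the difference, and the remaining trace-of-a-square term is handled by the Cayley--Hamilton identity for the same $2\times2$ endomorphism $A$, namely $\mathrm{tr}(A^2)=(\mathrm{tr}A)^2-2\det A$ with $\mathrm{tr}A=-Ric_p(\Theta,\Theta)$ and $\det A=R(0101)R(0\bar{2}0\bar{2})-R(010\bar{2})^2$; this rewrites $g^{il}_{S^2}R(0l0k)g^{kn}_{S^2}R(0n0i)$ through $Ric_p(\Theta,\Theta)^2$ and $R(0101)R(0\bar{2}0\bar{2})-R(010\bar{2})^2$, and a short arithmetic check then produces precisely $\frac19\rho^2\big[\frac14 Ric_p(\Theta,\Theta)^2-R(0101)R(0\bar{2}0\bar{2})+R(010\bar{2})^2\big]$. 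Finally, the leftover pieces of order $\rho^3$ and higher, together with the mixed $w$--curvature remainders, are gathered into $O_p(\rho^3)$, $\rho L_p(w)$, $\frac1\rho Q_p^{(2)}(w)$ and $\frac1{\rho^2}L_p(w)Q_p^{(2)}(w)$ according to their order in $\rho$ and degree in $w$, which yields the claimed expansion.
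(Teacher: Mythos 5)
Your proposal is correct and follows essentially the same route as the paper: subtract the expansion of $D$ in Lemma \ref{lem:D} from one quarter of that of $H^2$ in Lemma \ref{lemH2}, observe the cancellation of all trace-type terms, and reduce the surviving curvature--Hessian and pure-curvature blocks using the identity $g^{ij}_{S^2}R(0i0j)=-Ric_p(\Theta,\Theta)$ from \eqref{eq:R-Ric}. The only difference is presentational -- you package the algebra via the $2\times 2$ polarization identity for $\mathrm{tr}(AB)$ and Cayley--Hamilton for $\mathrm{tr}(A^2)$, where the paper completes squares by hand in the orthonormal frame $\{\Theta_1,\bar{\Theta}_2\}$ -- and both yield the same result.
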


\begin{proof}
Putting together the formulas of Lemma \ref{lemH2} and Lemma \ref{lem:D}, we get 
\begin{eqnarray}
\frac{H^2}{4}-D &=&  \frac{1}{4\rho ^2}(\bigtriangleup _{S^2}w)^2 +\frac{1}{\|\Theta_2\|^2 \rho ^2} \big[(Hess_{S^2}w)_{12} ^2 -(Hess_{S^2}w)_{11} (Hess_{S^2}w)_{22}   \big]\nonumber\\
&&- \frac{1}{3}g^{il}_{S^2} R(0l0k)  g^{kj}_{S^m} (Hess_{S^2}w)_{ij}- \frac{1}{2} Ric_p(\Theta, \Theta)\bigtriangleup _{S^2}w\nonumber \\ 
&&+  \frac{2}{3\|\Theta_2\|^2} \big[2 R(0102)(Hess_{S^2}w)_{12}- R(0101)(Hess_{S^2}w)_{22}- R(0202)(Hess_{S^2}w)_{11}\big] \nonumber \\
&&- \frac{1}{9}  \rho ^2 \big[ g^{ik}_{S^2} g^{jl}_{S^2} R(0i0l) R(0j0k)\big] -\frac{1}{3} \rho^2 \big[R(0101) R(0\bar{2}0\bar{2})-R(010\bar{2})^2 \big]    \nonumber \\
&&+ \frac{5}{36} Ric_p(\Theta, \Theta )^2 \rho ^2+ O_p(\rho ^3)+\rho L_p(w)+ \frac{1}{\rho} Q_p^{(2)}(w)+ \frac{1}{\rho^2} L_p(w) Q_p^{(2)}(w)\nonumber 
\end{eqnarray}

Let us simplify the second and the third lines; they can be rewritten as
\begin{eqnarray}
&& - \frac{1}{3} R(0101) (Hess_{S^2}w)_{11}- \frac{1}{3} R(0\bar{2}0\bar{2}) \frac{1}{\|\Theta_2\|^2} (Hess_{S^2}w)_{22}+ \frac{2}{3\|\Theta_2\|^2} R(0102) (Hess_{S^2}w)_{12}\nonumber \\
&&- \frac{2}{3} R(0101)\frac{1}{\|\Theta_2\|^2 }(Hess_{S^2}w)_{22} -\frac{2}{3} R(0\bar{2}0\bar{2})(Hess_{S^2}w)_{11}- \frac{1}{2} Ric_p(\Theta, \Theta)\bigtriangleup_{S^2}w\nonumber \\ 
&=&  \frac{2}{3\|\Theta_2\|^2} R(0102)(Hess_{S^2}w)_{12} + \frac{1}{3} Ric_p(\Theta, \Theta)  \bigtriangleup _{S^2}w \nonumber \\
&&- \frac{1}{3} R(0101) \frac{1}{\|\Theta_2\|^2}(Hess_{S^2}w)_{22} -\frac{1}{3}R(0\bar{2}0\bar{2})(Hess_{S^2}w)_{11}- \frac{1}{2} Ric_p(\Theta, \Theta)\bigtriangleup _{S^2}w\nonumber \\ 
&=&-\frac{1}{6} Ric_p(\Theta, \Theta)\bigtriangleup _{S^2}w+  \frac{1}{3\|\Theta_2\|^2} \big[2R(0102)(Hess_{S^2}w)_{12}-R(0101)(Hess_{S^2}w)_{22} -R(0\bar{2}0\bar{2})(Hess_{S^2}w)_{11}\big] \nonumber 
\end{eqnarray}

Finally we have to simplify the forth and the fifth lines; they can be rewritten as 

\begin{eqnarray}
&& \Big\{- \frac{1}{9} R(0101)^2- \frac{1}{9}R(0\bar{2}0\bar{2})^2- \frac{2}{9}R(010\bar{2})^2  + \frac{5}{36} Ric_p(\Theta, \Theta )^2  - \frac{1}{3} R(0101)R(0\bar{2}0\bar{2}) +\frac{1}{3}R(010\bar{2})^2\Big\} \rho ^2 \nonumber \\
&=&\Big\{- \frac{1}{9} [R(0101)+R(0\bar{2}0\bar{2})]^2 - \frac{1}{9} R(0101)R(0\bar{2}0\bar{2})+ \frac{1}{9}R(010\bar{2})^2+ \frac{5}{36} Ric_p(\Theta, \Theta )^2\Big\}\rho^2 \nonumber\\
&=& \Big\{ \frac{1}{36}Ric_p(\Theta, \Theta )^2 - \frac{1}{9} R(0101)R(0\bar{2}0\bar{2})+ \frac{1}{9}R(010\bar{2})^2 \Big\}\rho^2 \nonumber
\end{eqnarray}
where, in the last equality, we used the usual identity $R(0101)+R(0\bar{2}0\bar{2})=-Ric_p(\Theta,\Theta)$. 

Collecting the formulas we get the desired expansion.
\end{proof}

\subsection {The differential of the conformal Willmore functional on perturbed geodesic spheres $S_{p,\rh}(w)$}\label{subsec:I'Mg}

\begin{pro}\label{lem:svilI'}
On the perturbed geodesic sphere $S_{p,\rho}(w)$ the differential of the conformal Willmore functional has the following form: 
\begin{equation} \nonumber
I'(S_{p,\rho}(w))=\frac{1}{2\rho^3}\triangle_{S^2}(\triangle _{S^2}+2 ) w -\frac{1}{6\rho} \triangle _{S^2} Ric_p(\Theta, \Theta) + O_p(\rho ^0)+\frac{1}{\rho^2}L_p^{(4)}(w) + \frac{1}{\rho^3}Q_p^{(2)(4)}(w)
\end{equation}
\end{pro}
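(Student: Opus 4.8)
The plan is to identify $I'(S_{p,\rho}(w))$ with the Euler--Lagrange operator of $I$ and then to feed in the geometric expansions of Subsection~\ref{subsec:geomExp}. First I would compute the first variation of $I(S_{p,\rho}(w))=\int_{S^2}\big(\tfrac{H^2}{4}-D\big)\sqrt{\det\mathring g}\,d\theta$ along $w\mapsto w+tv$. A perturbation $v$ of $w$ displaces the surface normally with speed $\rho v$ plus lower--order corrections (to leading order the radial field $\Theta$ is normal to $S_{p,\rho}$), so that $\delta I[v]=\int_{\mathring M}\mathcal W\,\phi\,d\Sigma$ with $\phi=\rho v+\dots$, where $\mathcal W$ is precisely the conformal Willmore operator quoted in the Introduction; by definition $I'(S_{p,\rho}(w))$ is this density $\mathcal W$ read on $S^2$ through the parametrization.

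To produce $\mathcal W$ it is convenient to split $I=\int\tfrac{H^2}{4}\,d\Sigma-\int D\,d\Sigma$. For the second piece the Gauss equation writes $D=K_{\mathring M}-\bar K$ as the difference of the intrinsic Gauss curvature of $\mathring M$ and the ambient sectional curvature of the tangent plane $T\mathring M$; since $\int_{\mathring M}K_{\mathring M}\,d\Sigma=2\pi\chi(\mathring M)$ is a topological constant by Gauss--Bonnet, it is variationally inert and only $-\int\bar K\,d\Sigma$ contributes --- this is what yields the ambient--curvature terms $\tfrac{\lambda_1-\lambda_2}{2}[R(\mathring N,e_1,\mathring N,e_1)-R(\mathring N,e_2,\mathring N,e_2)]$ and $\sum_{ij}(\nabla_{e_i}R)(\mathring N,e_j,e_j,e_i)$. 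The first piece is the standard Willmore first variation in a Riemannian ambient, whose leading term is $\tfrac12\triangle_{\mathring M}H$. Altogether $\mathcal W=\tfrac12\triangle_{\mathring M}H+H\big(\tfrac{H^2}{4}-D\big)+(\text{ambient curvature terms})$.

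Next I would substitute into $\mathcal W$ the expansions already at hand: $H$ from Lemma~\ref{lemH}, $\tfrac{H^2}{4}-D$ from Proposition~\ref{lem:H2-D}, together with $\triangle_{\mathring M}=\tfrac{1}{\rho^2}\triangle_{S^2}+\dots$ read off from Lemma~\ref{lem1forma}. The two explicit terms of the statement fall out of the leading operator $\tfrac12\triangle_{\mathring M}H$: acting on the linear part $\tfrac1\rho(2+\triangle_{S^2})w$ of $H$ gives $\tfrac{1}{2\rho^3}\triangle_{S^2}(\triangle_{S^2}+2)w$ --- note that the decisive factor $(\triangle_{S^2}+2)$, whose kernel is the tangent space to the critical manifold $Z$, is already encoded in the linearized--mean--curvature structure of $H$ in Lemma~\ref{lemH}, so no separate Bochner identity is needed --- while acting on the $w$--independent part $-\tfrac13 Ric_p(\Theta,\Theta)\rho$ of $H$ gives $-\tfrac{1}{6\rho}\triangle_{S^2}Ric_p(\Theta,\Theta)$.

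It then remains to verify that every other contribution lands in one of the three remainder classes. On the round geodesic sphere one has $\lambda_1-\lambda_2=O(\rho)$ and $\tfrac{H^2}{4}-D=O(\rho^2)$, so the $w$--independent parts of $H\big(\tfrac{H^2}{4}-D\big)$ and of the ambient--curvature terms are $O(\rho^0)$ and are absorbed into $O_p(\rho^0)$; the linear--in--$w$ corrections, carrying up to four derivatives of $w$ with a prefactor no larger than $\rho^{-2}$, are collected into $\tfrac{1}{\rho^2}L_p^{(4)}(w)$; and the genuinely nonlinear contributions, again with up to four derivatives and worst prefactor $\rho^{-3}$, form $\tfrac{1}{\rho^3}Q_p^{(2)(4)}(w)$. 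The main obstacle is the first step: carrying out the first variation of the full conformal integrand cleanly --- in particular using Gauss--Bonnet to discard the intrinsic part of $\int D$ so that only ambient curvature survives --- and then the meticulous bookkeeping of the $\rho$-- and $w$--orders of the many remainders to certify that they match the stated $L^{(4)}$, $Q^{(2)(4)}$ and $O_p(\rho^0)$ classes.
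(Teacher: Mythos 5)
Your proposal matches the paper's own proof: the paper likewise starts from the first-variation formula $I'=\tfrac12\triangle_{\mathring{M}}H+H\big(\tfrac{H^2}{4}-D\big)+\tfrac{\lambda_1-\lambda_2}{2}\big[R(\mathring{N},e_1,\mathring{N},e_1)-R(\mathring{N},e_2,\mathring{N},e_2)\big]+\sum_{ij}(\nabla_{e_i}R)(\mathring{N},e_j,e_j,e_i)$, extracts the two explicit terms by applying $\tfrac12\triangle_{\mathring{g}}=\tfrac{1}{2\rho^2}\triangle_{S^2}+\dots$ to the $\tfrac1\rho(2+\triangle_{S^2})w$ and $-\tfrac13 Ric_p(\Theta,\Theta)\rho$ parts of $H$, and then absorbs $H\big(\tfrac{H^2}{4}-D\big)$, the $(\lambda_1-\lambda_2)$ term and $(\nabla R)$ into the remainder classes exactly as you do. The only difference is that the paper cites \cite{HL} for the first-variation formula (plus a Codazzi-equation simplification) rather than re-deriving it via the Gauss equation and Gauss--Bonnet as you sketch.
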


\begin{proof}
Let us recall the general expression of the differential of the conformal Willmore functional computed in \cite{HL} (Theorem 3.1 plus an easy computation using Codazzi equation).

Given a compact Riemannian surface $(\mathring{M},\mathring{g})$  isometrically immersed in the three dimensional Riemannian manifold $(M,g)$ and called $\mathring{N}$ the inward normal unit vector, the differential of the conformal Willmore functional 
$$ I(\mathring{M})=\int_{\mathring{M}}\bigg(\frac{H^2}{4}-D\bigg) d\Sigma $$
is
$$I'(\mathring{M})=\frac{1}{2}\triangle_{\mathring{M}}H+H\bigg(\frac{H^2}{4}-D\bigg)+\sum_{ij}R(\mathring{N},e_i,\mathring{N},e_j)  \mathring{h}_{ij}-\frac{1}{2}\sum_i HR(\mathring{N},e_i,\mathring{N},e_i)+\sum_{ij}(\nabla_{e_i}R)(\mathring{N},e_j,e_j,e_i)$$
where $e_1,e_2$ is a local orthonormal frame of $T_p\mathring{M}$ which diagonalizes the second fundamental form $\mathring{h}_{ij}$. 
\\Since $e_1,e_2$ are principal directions we get
$$\sum_{ij}R(\mathring{N},e_i,\mathring{N},e_j) \mathring{h}_{ij}-\frac{1}{2}\sum_i H R(\mathring{N},e_i,\mathring{N},e_i)=\frac{(\lambda_1-\lambda_2)}{2} [R(\mathring{N},e_1,\mathring{N},e_1)-R(\mathring{N},e_2,\mathring{N},e_2)]$$
where $\lambda_1,\lambda_2$ are the principal curvatures.
So in this frame the differential is
\begin{equation}\label{eq:diffI}
I'(\mathring{M})= \frac{1}{2}\triangle_{\mathring{M}}H+ H\bigg(\frac{H^2}{4}-D\bigg)+\frac{(\lambda_1-\lambda_2)}{2}[R(\mathring{N},e_1,\mathring{N},e_1)-R(\mathring{N},e_2,\mathring{N},e_2)] +\sum_{ij}(\nabla_{e_i}R)(\mathring{N},e_j,e_j,e_i).
 \end{equation}

Now we want to compute the differential on the perturbed geodesic sphere $S_{p,\rho}(w)$. 
\\Recall that
\begin{eqnarray}
\triangle_{\mathring{g}}u&=&\mathring{g}^{ij}\big(u_{ij}-\mathring{\Gamma}_{ij}^k u_k \big)\nonumber\\
&=& \frac{1}{\rho^2} g_{S^2}^{ij}\big( u_{ij}-\Gamma_{ij}^k u_k \big)+O(\rho^0) L(u)+\frac{1}{\rho^2}L_p(w) L(u)+\frac{1}{\rho^2}Q_p^{(2)}(w)L(u)\nonumber\\
&=& \frac{1}{\rho^2} \bigtriangleup _{S^2}u+O(\rho^0) L(u)+\frac{1}{\rho^2}L_p(w)L(u)+\frac{1}{\rho^2}Q_p^{(2)}(w)L(u)\nonumber
\end{eqnarray}
where $L(u)$ is a linear function depending on $u$ and on its derivatives up to order two.
From the above computation of $H$  we have
$$H = \frac{2}{\rho}+\frac{1}{\rho}(2+ \triangle _{S^2} ) w -\frac{1}{3} Ric_p(\Theta, \Theta)\rho+O(\rho^2)+\rho L_p(w)+\frac{1}{\rho}Q^{(2)}_p(w),$$
hence
\begin{eqnarray}
\triangle_{\mathring{g}}H &=&\frac{1}{\rho^3} \triangle _{S^2}(\triangle _{S^2}+2 ) w -\frac{1}{3\rho} \triangle _{S^2} Ric_p(\Theta, \Theta)+ O_p(\rho ^0)+\frac{1}{\rho^2}L_p^{(4)}(w)+\frac{1}{\rho^3}Q_p^{(2)(4)}(w) . \nonumber 
\end{eqnarray} 

Now let us show that the other summands are negligible.
\\First we find an expansion for the principal directions $\lambda_1$ and $\lambda_2$. From the definitions, they are the roots of the polynomial equation
$$x^2-Hx+D=0$$
so 
$$\lambda_{1,2}=\frac{H}{2}\pm \frac{\sqrt{H^2-4D}}{2}=\frac{1}{\rho}+O(\rho)+\frac{1}{\rho}L_p (w)+\frac{1}{\rho}Q^{(2)}_p(w)$$
and the third summand is negligible:
$$(\lambda_1-\lambda_2)[R(\mathring{N},e_1,\mathring{N},e_1)-R(\mathring{N},e_2,\mathring{N},e_2)]=O(\rho)+\frac{1}{\rho}L_p(w)+\frac{1}{\rho}Q^{(2)}_p(w).$$
From the above computation of  $\frac{H^2}{4}-D$, we have
$$\frac{H^2}{4}-D=O_p(\rho^2)+L_p(w)+\frac{1}{\rho^2}Q^{(2)}_p(w)$$
hence we get
$$H\bigg( \frac{H^2}{4}-D \bigg)= O_p(\rho )+\frac{1}{\rho} L_p(w)+  \frac {1}{\rho^3} Q_p^{(2)}(w).$$
Therefore also this term is negligible and we can conclude observing that $(\nabla_{e_i}R)(\mathring{N},e_j,e_j,e_i)=O(\rho^0)$.

\end{proof}

\subsection{The finite dimensional reduction}\label{subsec:FDRnon}
NOTATION. In this subsection, the functional space will be $C^{4,\a}(S^2)^\perp$: the perturbation $w$ will be an element of $C^{4,\a}(S^2)^\perp$ and $B(0,r)$ will denote the ball of center $0$ and radius $r$ in $C^{4,\a}(S^2)^\perp$. 

\begin{lem}\label{lemEqAuxContr}
Fixed a compact subset $Z_c \subseteq M,$ there exist $\rho_0>0$, $r>0$ and a map $w_{(.,.)}:Z_c \times [0,\rh_0] \to C^{4,\alpha}(S^2)^\perp$, $(p,\rh) \mapsto w_{p,\rh}$ such that if $S_{p,\rh}(w)$ is a critical point of the conformal Willmore functional $I$ with $(p,\rh,w) \in Z_c\times [0,\rh_0] \times B(0,r)$ then $w=w_{p,\rh}$. 
\\Moreover the map $w_{(.,.)}$ satisfies the following properties:
\\$(i) $ the map $(p,\rho)\mapsto w_{p,\rho}$ is $C^1$,
\\$(ii)$  $\| w_{p,\rho}\|_{C^{4,\alpha}(S^2)}=O(\rho^2)$ as $\rho\to 0$ uniformly for $p\in Z_c$,
\\$(iii)$  $\| \frac{\partial}{\partial \rh }w_{p,\rho}\|_{L^2(S^2)}=O(\rho)$ as $\rho\to 0$ uniformly for $p\in Z_c$,
\\$(iv)$ we have the following explicit expansion of $w_{p,\rho}$:
\begin{equation}\label{eq:svilw}
w_{p,\rho}= -\frac{1}{12}\rho^2 Ric_p(\Theta,\Theta)+\frac{1}{36}\rho^ 2 R(p)+ O(\rho^3)
\end{equation} 
where the remainder $O(\rh^3)$ has to be intended in $C^{4,\a}(S^2)$ norm.
\end{lem}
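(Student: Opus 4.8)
The plan is to recover $w_{p,\rho}$ as the unique small solution of the \emph{auxiliary equation}, namely the projection of the Euler--Lagrange equation $I'(S_{p,\rho}(w))=0$ onto $C^{4,\alpha}(S^2)^\perp$, which I would solve by a contraction argument using the radius $\rho$ as perturbation parameter. Write $\mathcal{L}:=\triangle_{S^2}(\triangle_{S^2}+2)$ and let $P$ be the $L^2$-orthogonal projection onto $Ker[\mathcal{L}]^\perp$. Since $-\triangle_{S^2}$ has eigenvalues $\ell(\ell+1)$, the kernel of $\mathcal{L}$ is exactly the span of the constant and first-order spherical harmonics, so by Schauder theory $\mathcal{L}$ restricts to a Banach space isomorphism from $C^{4,\alpha}(S^2)^\perp$ onto $C^{0,\alpha}(S^2)\cap Ker[\mathcal{L}]^\perp$ whose inverse gains four derivatives. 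Inserting the expansion of $I'$ from Proposition \ref{lem:svilI'}, projecting (which leaves $\mathcal{L}w$ unchanged because $\mathcal{L}$ preserves $Ker[\mathcal{L}]^\perp$) and multiplying by $2\rho^3$, the auxiliary equation reads
\begin{equation}\nonumber
\mathcal{L}w=\frac{\rho^2}{3}\,P\big[\triangle_{S^2}Ric_p(\Theta,\Theta)\big]-2\rho^3 P[O_p(\rho^0)]-2\rho\,P[L_p^{(4)}(w)]-2\,P[Q_p^{(2)(4)}(w)].
\end{equation}
Thus I would define $w_{p,\rho}$ as a fixed point of the map $F_{p,\rho}$ sending $w$ to $\mathcal{L}^{-1}$ of the right-hand side above.

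Next I would show that $F_{p,\rho}$ is a uniform contraction of the ball $B(0,C\rho^2)\subset C^{4,\alpha}(S^2)^\perp$ for $\rho\le\rho_0$ and $p\in Z_c$. The estimates rest on the defining bounds recalled in the ``notations'': $\|\mathcal{L}^{-1}\psi\|_{C^{4,\alpha}}\le C\|\psi\|_{C^{0,\alpha}}$ for $\psi\in Ker[\mathcal{L}]^\perp$, while $\|L_p^{(4)}(w)\|_{C^{0,\alpha}}\le C\|w\|_{C^{4,\alpha}}$ makes the linear term carry a factor $2\rho$, hence contractive for small $\rho$; the quadratic estimate \eqref{stimaQ} (with $b=2$) makes $Q_p^{(2)(4)}$ Lipschitz with constant $O(\|w_1\|+\|w_2\|)=O(\rho^2)$ on that ball. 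Since $Q_p^{(2)(4)}(0)=0$, the image $F_{p,\rho}(0)$ is $O(\rho^2)$ (its dominant part being $\frac{\rho^2}{3}\mathcal{L}^{-1}P[\triangle_{S^2}Ric_p(\Theta,\Theta)]$), uniformly for $p\in Z_c$, so $F_{p,\rho}$ maps the ball into itself. Banach's fixed point theorem then produces a unique $w_{p,\rho}$ there. Choosing $r<C\rho_0^2$ and noting that any $w\in B(0,r)$ for which $S_{p,\rho}(w)$ is critical solves in particular the projected equation, uniqueness forces $w=w_{p,\rho}$; this is the main assertion, and the radius $C\rho^2$ of the ball yields estimate (ii).

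For the explicit expansion (iv) I would iterate the fixed-point relation once: as $\|w_{p,\rho}\|_{C^{4,\alpha}}=O(\rho^2)$, the terms $2\rho^3 P[O_p(\rho^0)]$, $2\rho P[L_p^{(4)}(w_{p,\rho})]$ and $2P[Q_p^{(2)(4)}(w_{p,\rho})]$ are $O(\rho^3)$, $O(\rho^3)$ and $O(\rho^4)$, so $w_{p,\rho}=\frac{\rho^2}{3}\mathcal{L}^{-1}P[\triangle_{S^2}Ric_p(\Theta,\Theta)]+O(\rho^3)$. To evaluate the leading term I split $Ric_p(\Theta,\Theta)=\frac13 R(p)+T_p$ into its constant part and its trace-free part $T_p$, a pure second spherical harmonic; then $\triangle_{S^2}Ric_p(\Theta,\Theta)=-6T_p$, $P$ acts as the identity on it, and $\mathcal{L}^{-1}$ is multiplication by $1/24$ on the $\ell=2$ eigenspace, giving $\frac{\rho^2}{3}\mathcal{L}^{-1}P[\triangle_{S^2}Ric_p(\Theta,\Theta)]=-\frac{\rho^2}{12}T_p=-\frac{1}{12}\rho^2 Ric_p(\Theta,\Theta)+\frac{1}{36}\rho^2 R(p)$, which is exactly (iv). Property (i) follows from the uniform contraction principle: in the rescaled variable $v=\rho^{-2}w$ the map depends $C^1$ (indeed smoothly) on $(p,\rho)$ up to $\rho=0$ — the curvature coefficients are smooth in $p$ and the $\rho$-dependence is explicit — so $w_{p,\rho}=\rho^2 v_{p,\rho}$ is $C^1$. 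Property (iii) follows by differentiating the fixed-point identity in $\rho$ and solving the resulting linear equation, whose inhomogeneous term is $O(\rho)$, for $\partial_\rho w_{p,\rho}$.

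The main obstacle is the bookkeeping forced by the singular prefactor $\rho^{-3}$ in $I'$: after multiplying by $2\rho^3$ one must check that every remainder stemming from $L_p$, $Q_p$ and $O_p(\rho^0)$ genuinely carries enough powers of $\rho$ to be subleading, and that all the constants are uniform over the compact $Z_c$, so that the contraction persists as $\rho\to0$. Once this is in place, the spectral computation underlying (iv) and the differentiations needed for (i) and (iii) are routine.
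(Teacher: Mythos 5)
Your proposal is correct and follows essentially the same route as the paper: the projected (auxiliary) equation built from Proposition \ref{lem:svilI'}, inverted via $\mathcal{L}=\triangle_{S^2}(\triangle_{S^2}+2)$ on $Ker[\mathcal{L}]^\perp$ with Schauder estimates and a Banach fixed point, and for (iv) the same decomposition of $Ric_p(\Theta,\Theta)$ into its constant part and an $\ell=2$ spherical harmonic on which $\mathcal{L}$ acts by $24$. The only point to tighten is that the contraction (and hence uniqueness) should be run on the fixed ball $B(0,r)$ rather than on $B(0,C\rho^2)$ — otherwise a critical point with $C\rho^2<\|w\|_{C^{4,\alpha}}<r$ is not excluded — which your own estimates already give, since the Lipschitz constant of the quadratic term on $B(0,r)$ is $O(r)$.
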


\begin{proof}
For the proof of $(i),(ii)$ and $(iii)$ we refer to Lemma 4.4 of \cite{Mon}, here we only give a sketch of the idea.
Fixed  a compact subset $Z_c \subseteq M$ and $p\in Z_c$, if
$$I'(S_{p,\rho}(w))=0\quad (\text{equality in } L^2(S^2) ),$$
then, setting $P: L^2(S^2) \to Ker[ \triangle _{S^2}(\triangle _{S^2} + 2 )]^\perp $ the orthogonal projection, a fortiori we  have
$$PI'(S_{p,\rho}(w))=0;$$
that is, using the expansion of Proposition \ref{lem:svilI'},
\begin{equation} \label{eqdiffw}
P \Big[  \triangle _{S^2}(\triangle _{S^2} + 2 ) w +O_p(\rho^2) +\rho L_p^{(4)}(w)+ Q_p^{(2)(4)}(w) \Big] =0.
\end{equation}
Since  $ \triangle _{S^2}(\triangle _{S^2} + 2 )$ is invertible on the space orthogonal to the Kernel and $w \in C^{4,\alpha}(S^2)^\perp:=Ker[ \triangle _{S^2}(\triangle _{S^2} + 2 )]^\perp \cap C^{4,\alpha}(S^2)$, setting 
$$K:= [\triangle _{S^2}(\triangle _{S^2} + 2 )]^{-1}: Ker[ \triangle _{S^2}(\triangle _{S^2} + 2 )]^\perp\subseteq L^2(S^2) \to  Ker[ \triangle _{S^2}(\triangle _{S^2} + 2 )]^\perp,$$ 
the equation \eqref{eqdiffw} is equivalent to the fixed point problem
\begin{equation} \label{puntofissow}
 w = K [O_p(\rho^2) +\rho L_p^{(4)}(w)+ Q_p^{(2)(4)}(w)]=F_{p,\rho} (w).
\end{equation}
The projection in the right hand side is intended.
In the aforementioned paper (using Schauder estimates) it is proved that once  the compact $Z_c \subset M$ is fixed, there exist $\rho_0>0$ and $r>0$ such that for all $p \in Z_c$ and $\rho<\rho_0$ the map
$$F_{p,\rho}:B(0,r)\subset C^{4,\a}(S^2)^\perp \to C^{4,\a}(S^2)^\perp$$
is a contraction. In the same paper the regularity and the decay properties are shown.
\\

Now let us prove the expansion $(iv)$. 
\\Using the formula of Proposition \ref{lem:svilI'}, the unique solution $w \in B(0,r)$ to the fixed point problem will have to satisfy the following fourth order elliptic PDE: 
\begin{equation} \nonumber
\triangle_{S^2}(\triangle _{S^2}+2 ) w =\frac{1}{3} \rho^2 \triangle _{S^2} Ric_p(\Theta, \Theta) + O_p(\rho^3)+\rho L_p^{(4)}(w) +Q_p^{(2)(4)}(w).
\end{equation}
Clearly the unique solution $w$ has the form $w=\rh^2 \bar{w}+O(\rh ^3)$ where the remainder has to be intended in $C^{4,\a}(S^2)$ norm and $\bar{w} \in C^{4,\a}(S^2)$ is independent on $\rh$. Now we want to find an explicit formula for $\bar{w}$.
Writing the radial unit vector in normal coordinates on $T_pM$, we have $\Theta=x^i \frac{\partial}{\partial x^i}$ and the Ricci tensor can be written as
$$Ric_p(\Theta, \Theta)= \sum_{i\neq j} R_{ij}x^ix^j+ \sum_i R_{ii}(x^i)^2.$$
Recall that the eigenfunctions of $\triangle_{S^2}$ relative to the second eigenvalue $\lambda_2=-6$ are $x^i x^j, i\neq j$ and 
$(x^i)^2-(x^j)^2, i\neq j$ and notice that
$$2(x^1)^2-1=(x^1)^2-(x^2)^2+(x^1)^2-(x^3)^2-(x^1)^2$$
so
$$ (x^1)^2 - \frac{1}{3}=\frac{1}{3}\bigg\{[(x^1)^2-(x^2)^2]+[(x^1)^2-(x^3)^2]\bigg\} $$
is an element of the eigenspace relative to $\lambda_2=-6$ (analogously for the others $(x^i)^2$).
So 
\begin{eqnarray}
Ric_p(\Theta,\Theta)&=&\sum_{i\neq j}R_{ij}(p)x^ix^j+\sum_{i}R_{ii}(p)[(x^i)^2-\frac{1}{3}]+\frac{1}{3}\sum_{i}R_{ii}(p)\nonumber\\
&=&\sum_{i\neq j}R_{ij}(p)x^ix^j+\sum_{i}R_{ii}(p)[(x^i)^2-\frac{1}{3}]+\frac{1}{3}R(p) \nonumber
\end{eqnarray}
and $Ric_p(\Theta,\Theta)-\frac{1}{3}R(p) $ is an element of the second eigenspace of $\triangle_{S^2}$.
\\Recalling that $w=\rho^2 \bar{w}+O(\rho^3)$, then $\bar{w}$ has to solve the following linear elliptic PDE
\begin{equation} \nonumber
\triangle_{S^2}(\triangle _{S^2}+2 ) \bar{w} =\frac{1}{3}  \triangle _{S^2} \Big[Ric_p(\Theta, \Theta)-\frac{1}{3}R(p) \Big]. 
\end{equation} 
Since the right hand side is an eigenfunction of  $\triangle _{S^2}$ with eigenvalue $-6$ the equation is easily solved as
$$\bar{w}=-\frac{1}{12}Ric(\Theta,\Theta)+\frac{1}{36}R(p).$$
\end{proof}

\subsection{The expansion of the reduced functional $I(S_{p,\rh}(w_{p,\rh}))$}\label{subsec:ExpIMg}
In this subsection we want to evaluate the reduced functional $I(S_{p,\rho}(w_{p,\rho}))$, that is the conformal Willmore functional on perturbed geodesic spheres with perturbation $w$ in the constraint given by Proposition \ref{lemEqAuxContr}.  
\begin{pro}\label{svilPhipr}
The conformal Willmore functional on perturbed geodesic spheres $S_{p,\rho}(w_{p,\rho})$ with perturbation $w_{p,\rho}$ lying in the constraint given by Proposition \ref{lemEqAuxContr} can be expanded in $\rho$ as follows
$$I(S_{p,\rho}(w_{p,\rho}))=  \frac{\pi}{5} \|S_p\|^2 \rho^4+ O_p(\rho ^5),$$
where $S_p$ is the Traceless Ricci tensor defined in \eqref{def:S}. 
\end{pro}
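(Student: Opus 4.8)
The plan is to integrate the pointwise expansion of the integrand $\frac{H^2}{4}-D$ given in Proposition \ref{lem:H2-D} against the area element, after inserting the constrained perturbation $w_{p,\rho}$ of Lemma \ref{lemEqAuxContr}. Writing $w_{p,\rho}=\rho^2\bar w+O(\rho^3)$ with $\bar w=-\frac{1}{12}\big(Ric_p(\Theta,\Theta)-\frac13 R(p)\big)$, note that $\bar w=-\frac{1}{12}S_p(\Theta,\Theta)$ is (a multiple of) the trace-free part of $Ric_p(\Theta,\Theta)$, hence a second spherical harmonic with $\triangle_{S^2}\bar w=-6\bar w$. Every term in Proposition \ref{lem:H2-D} carries either two factors among $Hess_{S^2}w,\triangle_{S^2}w$ or an explicit $\rho^2$, so substituting $w=\rho^2\bar w$ turns the whole integrand into $\rho^2 f_p(\Theta)+O(\rho^3)$ for an explicit $f_p$. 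Since $\sqrt{\det\mathring g}=\|\Theta_2\|\rho^2(1+O(\rho^2))$ by Lemma \ref{lem:detg} and $\|\Theta_2\|\,d\theta^1 d\theta^2=d\sigma_{S^2}$ is the round area form, one gets $I(S_{p,\rho}(w_{p,\rho}))=\rho^4\int_{S^2}f_p\,d\sigma_{S^2}+O_p(\rho^5)$, and the whole content is the identity $\int_{S^2}f_p\,d\sigma_{S^2}=\frac{\pi}{5}\|S_p\|^2$.

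The evaluation of $\int f_p$ splits along the four groups of Proposition \ref{lem:H2-D}: the fourth-order part $\frac14(\triangle_{S^2}\bar w)^2-\frac{1}{\|\Theta_2\|^2}[(Hess_{S^2}\bar w)_{11}(Hess_{S^2}\bar w)_{22}-(Hess_{S^2}\bar w)_{12}^2]$; the curvature–Hessian cross term; the pure curvature term $\frac19[\frac14 Ric_p(\Theta,\Theta)^2-R(0101)R(0\bar2 0\bar2)+R(010\bar2)^2]$; and $-\frac16 Ric_p(\Theta,\Theta)\triangle_{S^2}\bar w$. Two devices make all of these explicit. First, extend $\bar w$ to the harmonic homogeneous polynomial $F(x)=S_{\mu\nu}x^\mu x^\nu$ on $T_pM\cong\Rtre$ and use the restriction formula $Hess_{S^2}(F|_{S^2})(X,Y)=Hess_{\Rtre}F(X,Y)-(\partial_r F)\,g_{S^2}(X,Y)$ with $Hess_{\Rtre}F\equiv 2S_p$ and $\partial_r F=2\,S_p(\Theta,\Theta)$; this gives the closed form $Hess_{S^2}\bar w(\bar\Theta_a,\bar\Theta_b)=-\frac16[S_p(\bar\Theta_a,\bar\Theta_b)-S_p(\Theta,\Theta)\delta_{ab}]$. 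Second, since $\dim M=3$ the Weyl tensor vanishes, so the restricted ambient curvature $K_{ab}:=R(0\bar a0\bar b)$ is expressible through Ricci as $K_{ab}=-Ric_p(\bar\Theta_a,\bar\Theta_b)+\big(\frac12 R(p)-Ric_p(\Theta,\Theta)\big)\delta_{ab}$; in particular $R(0101)R(0\bar2 0\bar2)-R(010\bar2)^2=\det K=\frac12[(\mathrm{tr}\,K)^2-\mathrm{tr}(K^2)]$ becomes a Ricci expression.

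With these substitutions each contribution becomes a polynomial of degree at most four in the components $\Theta^\mu$, and the integration reduces to the two standard moments $\int_{S^2}\Theta^\mu\Theta^\nu\,d\sigma=\frac{4\pi}{3}\delta^{\mu\nu}$ and $\int_{S^2}\Theta^\mu\Theta^\nu\Theta^\iota\Theta^\kappa\,d\sigma=\frac{4\pi}{15}(\delta^{\mu\nu}\delta^{\iota\kappa}+\delta^{\mu\iota}\delta^{\nu\kappa}+\delta^{\mu\kappa}\delta^{\nu\iota})$. For the cross term it is cleanest to package it together with $-\frac16 Ric_p(\Theta,\Theta)\triangle_{S^2}\bar w$ as the single trace-free pairing $\frac13\langle\mathring K,\mathring h\rangle$, where $\mathring K$ and $\mathring h$ are the trace-free parts of $K$ and of $Hess_{S^2}\bar w$; using the closed form above this pairing is itself a degree-four polynomial in $\Theta$, so no integration by parts is needed (if preferred, one integrates $\langle K,Hess_{S^2}\bar w\rangle$ by parts on $S^2$). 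Collecting the four integrals and repeatedly using $\|S_p\|^2=\|Ric_p\|^2-\frac13 R(p)^2$ and $\mathrm{tr}(Ric_p^2)=\|Ric_p\|^2$, all $R(p)^2$ and $\|Ric_p\|^2$ terms recombine into $\frac{\pi}{5}\|S_p\|^2$, while the error is controlled uniformly in $p$ by the $O_p$-bookkeeping of Subsection \ref{subsec:geomExp} and Lemma \ref{lemEqAuxContr}, yielding the remainder $O_p(\rho^5)$.

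The hard part is the exact cancellation of the scalar-curvature pieces: taken separately, the four groups each produce both $\|Ric_p\|^2$ and $R(p)^2$ with fractional coefficients, and only after the full sum — which crucially uses three-dimensionality (Weyl $=0$) to eliminate the genuinely fourth-order invariant $\|Riem_p\|^2=4\|Ric_p\|^2-R(p)^2$ hidden in $\det K$, together with the exact value $-6$ of the second eigenvalue of $\triangle_{S^2}$ — do the scalar terms assemble into the trace-free combination $\|Ric_p\|^2-\frac13 R(p)^2$. Tracking the precise rational coefficients through the cross term and the $\det K$ term is the only delicate point; everything else is the polynomial bookkeeping already prepared in the preceding lemmas.
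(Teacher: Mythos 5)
Your proposal follows the same overall strategy as the paper's proof --- substitute $w_{p,\rh}=\rh^2\bar w+O(\rh^3)$ into the pointwise expansion of Proposition \ref{lem:H2-D}, multiply by the leading term $\rh^2\|\Theta_2\|$ of the area element, and reduce everything to an explicit integral over $S^2$ --- and it is correct, but the intermediate algebra is organized quite differently. Where the paper computes $(Hess_{S^2}\bar w)_{ij}$ by hand from the Christoffel symbols of $S^2$ in polar coordinates, you get the same closed form in one line from the restriction formula for the Hessian of the homogeneous polynomial $S_{\mu\nu}x^\mu x^\nu$; where the paper derives the needed curvature identities one at a time from the trace relations ($R(0101)+R(0\bar 20\bar 2)=-Ric_p(\Theta,\Theta)$, $R(010\bar 2)=-Ric_p(\Theta_1,\bar\Theta_2)$, and \eqref{id:R-RRic}), you package them as the three-dimensional (Weyl-free) expression of the $2\times 2$ matrix $K_{ab}=R(0\bar a0\bar b)$ through Ricci; and where the paper proves the Claim by writing $\bar\Theta_1,\bar\Theta_2$ explicitly in coordinates and cancelling the non-polynomial pieces between the two products, you observe that only $O(2)$-invariant combinations in the frame of $\Theta^\perp$ occur (traces, determinants, and the trace-free pairing $\tfrac13\langle\mathring K,\mathring h\rangle$, which indeed absorbs the $-\tfrac16 Ric_p(\Theta,\Theta)\triangle_{S^2}\bar w$ term exactly), so that the standard second and fourth moments of $\Theta^\mu$ on $S^2$ suffice. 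This buys a cleaner and less coordinate-bound computation; what it does not yet deliver is the final rational arithmetic, which you explicitly defer --- the paper carries it out in full and lands on $\tfrac{\pi}{5}\bigl(\|Ric_p\|^2-\tfrac13 R(p)^2\bigr)$, confirming that your scheme closes. Two very minor imprecisions: the claim that every term of Proposition \ref{lem:H2-D} carries ``two factors among $Hess_{S^2}w$, $\triangle_{S^2}w$ or an explicit $\rh^2$'' is not literally true of the curvature--Hessian cross term and of $-\tfrac16 Ric_p(\Theta,\Theta)\triangle_{S^2}w$ (each carries a single factor of order $\rh^2$ with an $O(1)$ curvature coefficient), though the order count $\rh^2 f_p(\Theta)+O(\rh^3)$ is still right; and the identities you attribute to Weyl $=0$ are obtainable from the $3$-dimensional trace relations alone, which is all the paper uses.
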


\begin{proof}
In the sequel we fix a point $p\in M$ and we want to evaluate $I(S_{p,\rh}(w_{p,\rh}))$ for small $\rho$. For simplicity of notation, let us denote $w=w_{p,\rho}$; from Proposition \ref{lemEqAuxContr} we know that $w=\rho^2 \bar{w}+O(\rh^3)$. Notice that the leading part of $H^2/4-D$ is homogeneous of  degree two in $\rho$, so in order to evaluate $I(S_{p,\rho}(w_{p,\rho}))$ it is sufficient to multiply $H^2/4-D$ by the first term of $\sqrt{det[\mathring{g}]}$ (that is $\rho^2 \|\Theta_2\|$). Using the expansion of Proposition \ref{lem:H2-D} we get

\begin{eqnarray}
I(S_{p,\rho}(w))&=&\rho^4 \int_{S^2} \bigg[ \frac{1}{4}(\bigtriangleup _{S^2}\bar{w})^2-\frac{1}{\|\Theta_2\|^2}  (Hess_{S^2}\bar{w})_{11} (Hess_{S^2}\bar{w})_{22} + \frac{1}{\|\Theta_2\|^2} (Hess_{S^2}\bar{w})_{12} ^2 -\frac{1}{6}  Ric_p(\Theta, \Theta)\bigtriangleup _{S^2}\bar{w}\nonumber \\ 
&&+  \frac{2}{3\|\Theta_2\|^2} R(0102)(Hess_{S^2}\bar{w})_{12}- \frac{1}{3\|\Theta_2\|^2} R(0101) (Hess_{S^2}\bar{w})_{22} -\frac{1}{3} R(0\bar{2}0\bar{2}) (Hess_{S^2}\bar{w})_{11} \nonumber \\ 
&&+\frac{1}{9}  \Big( \frac{1}{4} Ric_p(\Theta, \Theta )^2 - R(0101)R(0\bar{2}0\bar{2})+ R(010\bar{2})^2  \Big) \bigg]d\Sigma_0 \quad + O_p(\rho ^5). \label{integrale}
\end{eqnarray}
From (iv) of Proposition \ref{lemEqAuxContr} it follows that 
$$\triangle _{S^2}\bar{w}=-6\bar{w}=\frac{1}{2}Ric_p(\Theta,\Theta)-\frac{1}{6}R(p)$$
so, after some easy computation, one can write 
\begin{equation}\label{integrando1}
\frac{1}{4}(\bigtriangleup _{S^2}\bar{w})^2-\frac{1}{6} Ric_p(\Theta, \Theta) \bigtriangleup _{S^2}\bar{w}+\frac{1}{36} Ric_p(\Theta, \Theta)^2 = \frac{1}{144}Ric_p(\Theta, \Theta)^2 -\frac{1}{72} Ric_p(\Theta, \Theta) R(p)+\frac{1}{144} R(p)^2.  
\end{equation}
In order to simplify the other integrands of \eqref{integrale} we compute  $(Hess_{S^2}\bar{w})_{ij}$. The nonvanishing Christoffel symbols of $S^2$ in polar coordinates $\theta^1, \theta^2$ are
\begin{eqnarray}
\Gamma_{12}^2=\Gamma_{21}^2= cotan \theta^1 \nonumber\\
\Gamma_{22}^1=-\sin \theta^1 \cos \theta^1. \nonumber
\end{eqnarray}
Hence, recalling that $(Hess w)_{ij}=w_{ij}-\Gamma^k_{ij}w_k$ and the expression of $w$ given in (iv), we get
\begin{eqnarray}
(Hess_{S^2}\bar{w})_{11}&=&\bar{w}_{11}=-\frac{1}{6}\partial_{\theta^1}( Ric_p(\Theta,\Theta_1))\nonumber\\
&=&-\frac{1}{6} Ric_p(\Theta_1,\Theta_1)-\frac{1}{6} Ric_p(\Theta,\Theta_{11}) \quad \text{but } \Theta_{11}=-\Theta\nonumber\\
&=&-\frac{1}{6} Ric_p(\Theta_1,\Theta_1)+\frac{1}{6} Ric_p(\Theta,\Theta)\nonumber\\
(Hess_{S^2}\bar{w})_{12}&=&\bar{w}_{12}-\Gamma_{12}^2 \bar{w}_2=-\frac{1}{6}\partial_{\theta^1}( Ric_p(\Theta,\Theta_2))+\frac{1}{6}\Gamma_{12}^2 Ric_p(\Theta,\Theta_2)\nonumber\\
&=&-\frac{1}{6} Ric_p(\Theta_1,\Theta_2)-\frac{1}{6}Ric_p(\Theta,\Theta_{12})+\frac{1}{6}\Gamma_{12}^2 Ric_p(\Theta,\Theta_2)    \text{ but } \Theta_{12}=cotan \theta^1 \Theta_2    \nonumber\\
&=&-\frac{1}{6} Ric_p(\Theta_1,\Theta_2)  \nonumber
\end{eqnarray}
\begin{eqnarray}
(Hess_{S^2}\bar{w})_{22}&=&\bar{w}_{22}-\Gamma_{22}^1 \bar{w}_1\nonumber \\
&=&-\frac{1}{6} Ric_p(\Theta_2,\Theta_2)-\frac{1}{6} Ric_p(\Theta,\Theta_{22})+\frac{1}{6}\Gamma_{22}^1 Ric_p(\Theta,\Theta_1) \text{ but } \Theta_{22}=-\sin \theta^1 \cos \theta^1 \Theta_1- \sin^2 \theta^1 \Theta \nonumber\\
&=&-\frac{1}{6} Ric_p(\Theta_2,\Theta_2)+\frac{1}{6} \|\Theta_2\|^2 Ric_p(\Theta,\Theta).\nonumber
\end{eqnarray}
Therefore the other part of the integrand can be written as

\begin{eqnarray}
&&-\frac{1}{\|\Theta_2\|^2}  (Hess_{S^2}\bar{w})_{11} (Hess_{S^2}\bar{w})_{22} + \frac{1}{\|\Theta_2\|^2} (Hess_{S^2}\bar{w})_{12} ^2+  \frac{2}{3\|\Theta_2\|^2} R(0102) (Hess_{S^2}\bar{w})_{12} \nonumber \\ 
&&- \frac{1}{3} R(0101) \frac{1}{\|\Theta_2\|^2}(Hess_{S^2}\bar{w})_{22} -\frac{1}{3} R(0\bar{2}0\bar{2})(Hess_{S^2}\bar{w})_{11}-\frac{1}{9} R(0101)R(0\bar{2}0\bar{2})+ \frac{1}{9}R(010\bar{2})^2 \nonumber \\
&=&- \frac{1}{36} Ric(\Theta, \Theta)^2 + \frac{1}{36} Ric(\Theta, \Theta) [Ric(\bar{\Theta}_2, \bar{\Theta}_2)+Ric(\Theta_1, \Theta_1)]- \frac{1}{36}Ric(\bar{\Theta}_2, \bar{\Theta}_2)Ric(\Theta_1, \Theta_1)\nonumber \\
&&+ \frac{1}{36} Ric(\Theta_1, \bar{\Theta}_2)^2- \frac{1}{9} R(010\bar{2})Ric(\Theta_1, \bar{\Theta}_2)+ \frac{1}{18} R(0101)(Ric(\bar{\Theta}_2, \bar{\Theta}_2)- Ric(\Theta, \Theta)) \nonumber\\
&&+\frac{1}{18} R(0\bar{2}0\bar{2})(Ric(\Theta_1, \Theta_1)- Ric(\Theta, \Theta))-\frac{1}{9} R(0101) R(0\bar{2}0\bar{2})+\frac{1}{9}R(010\bar{2})^2. \label{2intprov}
\end{eqnarray}
Using the following three identities (which follow from the orthogonality of $\{\Theta, \Theta_1, \bar{\Theta}_2 \}$, from the definitions and the symmetries of the curvature tensors)

\begin{eqnarray}
-\frac{1}{18}\Big(R(0101)+R(0\bar{2}0\bar{2})\Big) Ric_p(\Theta,\Theta)&=&\frac{1}{18} Ric_p(\Theta,\Theta)^2 \nonumber\\
Ric_p(\Theta_1, \Theta_1)+Ric_p(\bar{\Theta}_2, \bar{\Theta}_2)&=&R(p)-Ric_p(\Theta, \Theta) \label{id:Ric-R} \\
R(010\bar{2})&=&-Ric_p(\Theta_1,\bar{\Theta}_2), \nonumber
\end{eqnarray}
after some easy computations we can say that \eqref{2intprov} equals  

\begin{eqnarray}
&=&\frac{1}{36} Ric(\Theta, \Theta) R(p)- \frac{1}{36}Ric(\bar{\Theta}_2, \bar{\Theta}_2)Ric(\Theta_1, \Theta_1) +\frac{1}{4} Ric(\Theta_1, \bar{\Theta}_2)^2 \nonumber \\
&&+ \frac{1}{18} R(0101) Ric(\bar{\Theta}_2, \bar{\Theta}_2)+ \frac{1}{18} R(0\bar{2}0\bar{2}) Ric(\Theta_1, \Theta_1) -\frac{1}{9} R(0101) R(0\bar{2}0\bar{2}). 
\end{eqnarray}
Let us try to simplify the last line using that $R(0101)+R(\bar{2}1\bar{2}1) = -Ric(\Theta_1,\Theta_1)$ and identity \eqref{id:Ric-R}:
\begin{eqnarray}
&& \frac{1}{18} R(0101) Ric(\bar{\Theta}_2, \bar{\Theta}_2)+\frac{1}{18} R(0\bar{2}0\bar{2}) Ric(\Theta_1, \Theta_1)- \frac{1}{9} R(0101)R(0\bar{2}0\bar{2})\nonumber\\
&=&-\frac{1}{18} \big[Ric_p(\Theta_1,\Theta_1)+R(1\bar{2}1\bar{2})\big]Ric_p(\bar{\Theta}_2, \bar{\Theta}_2)-\frac{1}{18} \big[Ric(\bar{\Theta}_2, \bar{\Theta}_2)+ R(1\bar{2}1\bar{2})\big] Ric_p(\Theta_1, \Theta_1) \nonumber \\
&&-\frac{1}{9}\big[Ric_p(\Theta_1,\Theta_1)+R(1\bar{2}1\bar{2})\big] \big[Ric(\bar{\Theta}_2, \bar{\Theta}_2)+R(1\bar{2}1\bar{2})\big] \nonumber\\
&=&-\frac{2}{9}Ric_p(\Theta_1, \Theta_1)Ric_p(\bar{\Theta}_2, \bar{\Theta}_2)+\frac{1}{6} R(1\bar{2}1\bar{2}) Ric_p(\Theta,\Theta)-\frac{1}{6} R(1\bar{2}1\bar{2})R(p)-\frac{1}{9} R(1\bar{2}1\bar{2})^2. \label{3add}
\end{eqnarray}
Since $\{\Theta,\Theta_1,\bar{\Theta}_2\}$ is an orthonormal base of $T_pM$ we have the following useful identity
\begin{eqnarray}
R(1\bar{2}1\bar{2})&=&R(1\bar{2}1\bar{2})-Ric_p(\Theta,\Theta)+Ric_p(\Theta,\Theta) \nonumber\\
&=&\big[R(1\bar{2}1\bar{2})+R(0\bar{2}0\bar{2})+R(0101)\big]+Ric_p(\Theta,\Theta) \nonumber\\
&=&-\frac{1}{2}\big[Ric_p(\Theta_1,\Theta_1)+Ric_p(\bar{\Theta}_2,\bar{\Theta}_2)+Ric_p(\Theta,\Theta) \big]+Ric_p(\Theta,\Theta) \nonumber\\
&=&-\frac{1}{2} R(p)+Ric_p(\Theta,\Theta). \label{id:R-RRic}
\end{eqnarray}
Plugging the last identity \eqref{id:R-RRic} into formula \eqref{3add}, we get that \eqref{3add} equals
\begin{equation}\label{3add'}
=-\frac{2}{9}Ric_p(\Theta_1, \Theta_1)Ric_p(\bar{\Theta}_2, \bar{\Theta}_2)-\frac{5}{36}Ric_p(\Theta, \Theta) R(p)+\frac{1}{18}Ric_p(\Theta, \Theta)^2+\frac{1}{18}R(p)^2.
\end{equation}
Therefore the last line of \eqref{2intprov} equals \eqref{3add'} and the integrands \eqref{2intprov} become
$$= \frac{1}{18} Ric(\Theta, \Theta)^2-\frac{1}{9} Ric(\Theta, \Theta) R(p)+\frac{1}{18}R(p)^2+\frac{1}{4} Ric(\Theta_1, \bar{\Theta}_2)^2-\frac{1}{4}Ric_p(\Theta_1, \Theta_1)Ric_p(\bar{\Theta}_2, \bar{\Theta}_2);$$
hence the conformal Willmore functional expressed as in \eqref{integrale}, using the last formula and \eqref{integrando1}, becomes
\begin{eqnarray}
I(S_{p,\rho}(w))=\rho^4 \int_{S^2} \bigg[\frac{1}{16}Ric_p(\Theta, \Theta)^2 -\frac{1}{8} Ric_p(\Theta, \Theta) R(p)+\frac{1}{16} R(p)^2+\frac{1}{4} Ric(\Theta_1, \bar{\Theta}_2)^2 -\frac{1}{4}Ric_p(\Theta_1, \Theta_1)Ric_p(\bar{\Theta}_2, \bar{\Theta}_2)    \bigg]d\Sigma_0 \nonumber\\
+ O_p(\rho ^5)\label{integrale'}.
\end{eqnarray}
The integral of the first three summands is well-known (see for example the appendix of \cite{PX}), let us compute the integral of the last two summands.
\begin{claim} 
$$ \int_{S^2} \big[Ric_p(\Theta_1, \bar{\Theta}_2)^2-Ric_p(\Theta_1, \Theta_1)Ric_p(\bar{\Theta}_2, \bar{\Theta}_2) \big]d\Sigma_0=\frac{2\pi}{3} \big(\|Ric_p\|^2-R(p)^2 \big)$$
\end{claim}

\emph{Proof of the Claim:}
\\As before let us denote with $E_\mu, \mu=1,2,3$ an orthonormal base of $T_pM$ and with $x^\mu$ the induced coordinates. Under this notation the radial unit vector is
$$S^2 \ni \Theta = x^\mu E_\mu.$$
Recall that the polar coordinates $0<\theta^1<\pi, \quad 0<\theta^2<2\pi$ have been chosen such that $S^2$ is parametrized as follows 
\begin{displaymath} 
\left\{ \begin{array}{ll}
x^1= \sin \theta ^1 \cos \theta ^2 \\
x^2= \sin \theta ^1 \sin \theta ^2 \\
x^3= \cos \theta ^1. \\
\end{array} \right.
\end{displaymath}
The normalized tangent vectors $\bar{\Theta}_i:=\frac{\Theta_i}{\|\Theta_i \|}$ have coordinates
\begin{eqnarray}
\bar{\Theta}_1&=& \Theta _1=(\cos \theta ^1 \cos \theta ^2,\cos \theta ^1 \sin \theta ^2,-\sin \theta ^1)\nonumber \\ 
&=& \Bigg( \frac{x^1 x^3}{\sqrt{(x^1)^2+(x^2)^2}},\frac{x^2 x^3}{\sqrt{(x^1)^2+(x^2)^2}}, - \sqrt{(x^1)^2+(x^2)^2}\Bigg) \label{eqtheta1} \\
\bar{\Theta} _2&=&  (-\sin \theta ^2 ,\cos \theta ^2,0)\nonumber \\
&=&\Bigg( -\frac{x^2}{\sqrt{(x^1)^2+(x^2)^2}},\frac{x^1}{\sqrt{(x^1)^2+(x^2)^2}},0 \Bigg). \label{eqtheta2}
\end{eqnarray}
Using this expressions for $\bar{\Theta}_i$ we get the following formulas for $Ric_p(\bar{\Theta_i},\bar{\Theta}_j)$:
\begin{eqnarray}
Ric_p(\Theta_1,\Theta_1)&=&R_{11}\frac{(x^1)^2 (x^3)^2}{(x^1)^2+(x^2)^2} +2R_{12} \frac{x^1 x^2 (x^3)^2}{(x^1)^2+(x^2)^2}+R_{22} \frac{(x^2)^2 (x^3)^2}{(x^1)^2+(x^2)^2}-2R_{13}x^1x^3-2R_{23}x^2x^3+R_{33}[(x^1)^2+(x^2)^2]\nonumber \\
Ric_p(\bar{\Theta}_2,\bar{\Theta}_2)&=&R_{11}\frac{(x^2)^2}{(x^1)^2+(x^2)^2}-2R_{12} \frac{x^1 x^2}{(x^1)^2+(x^2)^2}+R_{22} \frac{(x^1)^2}{(x^1)^2+(x^2)^2}\nonumber \\
Ric_p(\Theta_1,\bar{\Theta}_2)&=&-R_{11}\frac{x^1 x^2 x^3}{(x^1)^2+(x^2)^2}-2R_{12}\frac{(x^2)^2x^3}{(x^1)^2+(x^2)^2}+R_{22}\frac{x^1 x^2 x^3}{(x^1)^2+(x^2)^2}+R_{12} x^3+R_{13}x^2-R_{23}x^1.\nonumber
\end{eqnarray}
Notice that the summands which contain a term of the type $(x^i)^{2m+1}$ ($m \in \N$) have vanishing integral on $S^2$; then, calling ``Remainder'' all these summands, we get
\begin{eqnarray}
Ric(\Theta_1, \bar{\Theta}_2)^2&=&(R_{11}^2-2R_{11}R_{22}+R_{22}^2-4R_{12}^2)\frac{(x^1)^2(x^2)^2(x^3)^2}{[(x^1)^2+(x^2)^2]^2}+R_{12}^2 (x^3)^2+R_{13}^2(x^2)^2+R_{23}^2(x^1)^2\nonumber \\
&&+ \text{ Remainder},\nonumber \\
Ric(\Theta_1, \Theta_1) Ric(\bar{\Theta}_2, \bar{\Theta}_2)&=&(R_{11}^2-2R_{11}R_{22}+R_{22}^2-4R_{12}^2)\frac{(x^1)^2(x^2)^2(x^3)^2}{[(x^1)^2+(x^2)^2]^2}+R_{11}R_{22} (x^3)^2+R_{11}R_{33}(x^2)^2+R_{22}R_{33}(x^1)^2\nonumber\\
 &&+ \text{ Remainder}.\nonumber
  \end{eqnarray}
Therefore the integral of the left hand side of the Claim becomes
$$=\int_{S^2} \bigg[  R_{12}^2 (x^3)^2+R_{13}^2(x^2)^2+R_{23}^2(x^1)^2-R_{11}R_{22} (x^3)^2-R_{11}R_{33}(x^2)^2-R_{22}R_{33}(x^1)^2\bigg]d\Sigma_0.$$
Recalling that $\int_{S^2}(x^\mu)^2 d\Sigma_0 =\frac{4\pi}{3}$,we can continue the equalities
\begin{eqnarray}
&=& \frac{4 \pi}{3} \big[R_{12}^2+R_{13}^2+R_{23}^2-R_{11}R_{22}-R_{11}R_{33}-R_{22}R_{33}\big]   \nonumber \\
&=&  \frac{2 \pi}{3} \big[(R_{11}^2+R_{22}^2+R_{33}^2+2R_{12}^2+2R_{13}^2+2R_{23}^2)-(R_{11}^2+R_{22}^2+R_{33}^2+2R_{11}R_{22}+2R_{11}R_{33}+2R_{22}R_{33})\big]\nonumber \\
&=& \frac{2 \pi}{3} \big(\|Ric_p\|^2-R(p)^2 \big) \nonumber 
\end{eqnarray}

Now we are in position to conclude the computation of the integral \eqref{integrale'}.
\\It is known that $\int_{S^2} Ric_p(\Theta,\Theta) d\Sigma_0=\frac{4\pi}{3}R(p)$ and $\int_{S^2}[Ric_p(\Theta,\Theta)]^2 d\Sigma_0=\frac{4\pi}{15} (2\|Ric_p\|^2+R(p)^2)$ (see the appendix of \cite{PX}) thus, grouping together this formulas and the claim,  we can say that the conformal Willmore functional on constrained small geodesic spheres can be expanded as
$$I(S_{p,\rho}(w))=  \frac{\pi}{5} \Big( \|Ric_p\|^2-\frac{1}{3}  R(p)^2 \Big) \rho^4 + O_p(\rho ^5).$$
A simple computation in the orthonormal basis that diagonalizes $Ric_p$ shows that the first term in the expansion is the squared norm of the Traceless Ricci tensor:
$$\Big( \|Ric_p\|^2-\frac{1}{3}  R(p)^2 \Big)=\|Ric_p-\frac{1}{3}g_p R(p) \|^2=\|S_p\|^2. $$
\end{proof}

\subsection {Proof of the non existence result}\label{subsec:pfNonEx}

We start with a Lemma, which asserts that for small perturbation $u\in C^{4,\a}(S^2)$ and small radius $\rho$, the perturbed geodesic sphere $S_{p,\rh}(u)$ can be obtained as a normal graph on an other geodesic sphere $S_{\tilde{p},\tilde{\rh}}$ with perturbation $\tilde{w}\in {C^{4,\a}}^\perp$: $S_{p,\rh}(u)=S_{\tilde{p},\tilde{\rh}}(\tilde{w})$; for the proof see \cite{Mon} Lemma 5.3.  

\begin{lem}\label{lem:uw}
Let $(M,g)$ be a Riemannian manifold of dimension three and fix $\bar{p}\in M$. Then there exist $B(0,r_1)\subset C^{4,\a}(S^2)$, $\rh_1>0$, a compact neighbourhood $U$ of $\bar{p}$ and three continuous functions 
\\$\cdot$ $p(.): B(0,r_1) \to U\subset M$,
\\$\cdot$ $\rh(.,.):(0,\rh_1) \times B(0,r_1) \to \R^+$, 
\\$\cdot$ $w(.,.):U\times B(0,r_1)  \to C^{4,\a}(S^2)^\perp$, 
\\such that for all $\bar{\rh} < \rh_1$ and $u \in B(0,r_1)$, all the perturbed geodesic  spheres $S_{\bar{p},\bar{\rh}}(u)$  can be realized as
$$S_{\bar{p},\bar{\rh}}(u) = S_{p(u), \rh(\bar{\rho},u)}[w(p(u),u)]. $$
\end{lem}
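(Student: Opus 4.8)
The plan is to obtain the re-parametrization by inverting a reconstruction map with the implicit (inverse) function theorem in Banach spaces. The geometric heart of the matter is a dimension count. The kernel $\mathrm{Ker}[\triangle_{S^2}(\triangle_{S^2}+2)]$ is exactly four–dimensional: it is spanned by the constant function (the eigenvalue $0$ of $\triangle_{S^2}$) and by the three first–order spherical harmonics $x^1,x^2,x^3$ (the eigenvalue $-2$). This is precisely the number of parameters describing a geodesic sphere, namely one radius and three coordinates of the centre. The strategy is to show that moving the centre absorbs the first–harmonic part of an arbitrary perturbation $u$, while changing the radius absorbs its constant part, so that $u$ splits into a change of geodesic sphere plus a genuinely orthogonal graph $\tilde w\in C^{4,\alpha}(S^2)^\perp$.

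Concretely, I would fix normal coordinates at $\bar p$ and parametrize candidate centres by $v\in T_{\bar p}M\cong\R^3$ through $p=Exp_{\bar p}(v)$. For $(v,\tilde\rho,\tilde w)$ near $(0,\bar\rho,0)$ in $T_{\bar p}M\times\R^+\times C^{4,\alpha}(S^2)^\perp$, the surface $S_{\,Exp_{\bar p}(v),\,\tilde\rho}(\tilde w)$ is a small $C^1$–perturbation of $S_{\bar p,\bar\rho}$; being a small normal graph it lies in a tubular neighbourhood of $S_{\bar p,\bar\rho}$, so the normal projection onto $S_{\bar p,\bar\rho}$ is a diffeomorphism and the surface can be written in a \emph{unique} way as $S_{\bar p,\bar\rho}(u)$ for some $u\in C^{4,\alpha}(S^2)$. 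This defines a $C^1$ map
\[
G_{\bar\rho}:(v,\tilde\rho,\tilde w)\longmapsto u,\qquad G_{\bar\rho}(0,\bar\rho,0)=0,
\]
and the Lemma amounts to inverting $G_{\bar\rho}$ near $(0,\bar\rho,0)$ and reading off the three components of the inverse.

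The key computation is the differential $dG_{\bar\rho}(0,\bar\rho,0)$. Varying only $\tilde w$ leaves the reference sphere fixed, so $\partial_{\tilde w}G_{\bar\rho}=\mathrm{Id}$ on $C^{4,\alpha}(S^2)^\perp$; varying the radius produces, to first order, the \emph{constant} graph function $\partial_{\tilde\rho}G_{\bar\rho}=-\tfrac{1}{\bar\rho}$; and an infinitesimal translation $v$ produces the \emph{first–harmonic} graph function $v\mapsto -\tfrac{1}{\bar\rho}\langle v,\Theta\rangle$ (these are the standard facts that, to leading order, dilations generate the constants and translations generate the degree–one harmonics of a round sphere; the curvature of $(M,g)$ enters only through $O(\bar\rho)$ corrections, as quantified by Lemma \ref{lem1forma}--Lemma \ref{lemH}). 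Hence the three images are respectively $C^{4,\alpha}(S^2)^\perp$, the constants, and the first harmonics, which are in direct sum and together equal $\mathrm{Ker}[\triangle_{S^2}(\triangle_{S^2}+2)]\oplus C^{4,\alpha}(S^2)^\perp=C^{4,\alpha}(S^2)$. Thus $dG_{\bar\rho}(0,\bar\rho,0)$ is a bounded linear isomorphism, and for $\bar\rho$ small it stays an isomorphism because the curvature corrections are $O(\bar\rho)$. By the inverse function theorem $G_{\bar\rho}$ is a local $C^1$–diffeomorphism; its inverse supplies continuous functions of $(\bar\rho,u)$, and reading off the new centre, radius and orthogonal graph yields the maps $p$, $\rho$ and $w$ with the dependences recorded in the statement (the bookkeeping separating the $u$–dependence from the residual $\bar\rho$–dependence being carried out as in \cite{Mon}, Lemma 5.3), and with $p=Exp_{\bar p}(v)$ landing in a compact neighbourhood $U$ of $\bar p$.

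The main obstacle is \emph{uniformity in $\bar\rho$} as $\bar\rho\to 0$, required so that a single $r_1$, $\rho_1$ and neighbourhood $U$ serve all $\bar\rho<\rho_1$. Both $\partial_{\tilde\rho}G_{\bar\rho}$ and $\partial_vG_{\bar\rho}$ carry a factor $1/\bar\rho$, and the geodesic–sphere geometry degenerates, so the naive quantitative IFT constants could blow up. I would control this by rescaling the ambient metric by $\bar\rho^{-2}$, i.e. a blow–up at $\bar p$: in the rescaled picture the metric converges in $C^k$ on compact sets to the Euclidean one, all the curvature contributions entering $G_{\bar\rho}$ become $O(\bar\rho)$ corrections to the $\bar\rho$–independent flat reconstruction map, and the leading operator is exactly the flat isomorphism computed above. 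The quantitative inverse function theorem then delivers $r_1,\rho_1>0$ and bounds on $G_{\bar\rho}^{-1}$ that are uniform for $\bar\rho\in(0,\rho_1)$, which is precisely what the statement requires; the continuity of $p,\rho,w$ follows from the smooth dependence of $G_{\bar\rho}$ on all of its data.
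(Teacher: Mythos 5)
Your proof is correct, and it is essentially the expected argument: the paper itself does not prove Lemma \ref{lem:uw} but defers entirely to \cite{Mon}, Lemma 5.3, and the proof there is exactly this inverse-function-theorem scheme, matching the four-dimensional kernel $\mathrm{Ker}[\triangle_{S^2}(\triangle_{S^2}+2)]$ (constants plus first harmonics) against the four parameters (radius plus centre), with the dilation generating the constants and translations generating the degree-one harmonics. Your treatment of the degeneration as $\bar\rho\to 0$ by blowing up at $\bar p$ is the right way to get $r_1,\rho_1$ uniform, and it correctly yields $|v|=O(\bar\rho)$ so that $p$ stays in a fixed compact neighbourhood $U$. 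The only cosmetic mismatch is that your construction naturally produces a centre $p(\bar\rho,u)$ depending on both arguments rather than the $p(u)$ written in the statement; since the application in the proof of Theorem \ref{TeoNoExist} only uses that $p$ tends to $\bar p$ as $u\to 0$ uniformly in $\bar\rho$, that $\rho(\bar\rho,u)$ is small, and that $w$ is small and orthogonal, this does not affect anything.
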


Now we are in position to prove the non existence result.
\\\\
\begin{pfnb} {\bf of Theorem \ref{TeoNoExist}}.
\\Since $\|S_{\bar{p}}\|\neq 0$, there exists $\eta>0$ and a compact neighbourhood $Z_c$ of $\bar{p}$ such that $\|S_p\|>\eta$ for all $p \in Z_c$.

From Lemma \ref{lemEqAuxContr} there exist $\rho_0>0$ and a ball $B(0,r)\subset C^{4,\a}(S^2)$  such that- for $w \in {C^{4,\a}}^\perp \cap B(0,r) $, $p\in Z_c$ and $\rh<\rh_0$- if the perturbed geodesic sphere $S_{p, \rh}(w)$ is a critical point of $I$ then $w=w_{p,\rh}$ with good decay properties as $\rh\to 0$. Moreover, for $p \in Z_c$ and $\rh<\rh_0$ we can consider the $C^1$ function
$$\Phi(p,\rh)= I(S_{p,\rh}(w_{p,\rh})).$$
Observe that if $S_{\tilde{p},\tilde{\rh}}(w_{\tilde{p},\tilde{\rh}})$ is a critical point for $I$ then a fortiori $(\tilde{p},\tilde{\rh})$ is a critical point of the constricted functional $\Phi(.,.)$.
\\Proposition \ref{svilPhipr} gives an expansion for $\Phi(p,\rh)$; differentiating it with respect to $\rho$ and recalling  (from Lemma \ref{lemEqAuxContr}) that as  $\rho\to 0$ one has $\|w_{p,\rho}\|_{C^{4,\a}}=O(\rho^2)$ and $\| \frac{\partial }{\partial \rho} w_{p,\rho}\|_{L^2}=O(\rho)$ uniformly for $p \in Z_c$, we get
$$\frac{\partial}{\partial \rho} \Phi(p,\rho)=   \frac{4\pi}{5} \|S_p\| \rho^3 + O_p(\rho ^4) $$
and 
\begin{equation}\label{eq:ddrhoPhi}
\Big|\frac{\partial}{\partial \rho} \Phi(p,\rho)  \Big|> \frac{4\pi}{5} \eta \rho^3  +  O(\rho ^4) \quad \text{for all $p \in Z_c$,}
\end{equation}
where the remainder $O(\rho^4)$ is uniform on $Z_c$.
\\From this equation we can say that there exist $\rh_2 \in ]0,\rh_0[$ such that for all $p\in Z_c$ and $\rh<\rh_2$, $(p,\rh)$ is not a critical point of $\Phi$.  
\\Hence 
\begin{eqnarray}
&& \forall w \in C^{4,\a}(S^2)^\perp\cap B(0,r),\text{ } \rh <\rh_2 \text{ and } p\in Z_c \label{hp:wrho} \\ 
&&  \Rightarrow   S_{p,\rh}(w) \text{ is NOT a  critical point of } I. \nonumber
\end{eqnarray}

Now from Lemma \ref{lem:uw}, if $u \in B(0,r_1)\subset C^{4,\a}(S^2)$ and $\bar{\rh}<\rh_1$, any perturbed sphere $S_{\bar{p},\bar{\rh}}(u)$ can be realized as
$$S_{\bar{p},\bar{\rh}}(u) = S_{p(u), \rh(\bar{\rh},u)}[w(p(u),u)],\quad w(p(u),u)\in C^{4,\a}(S^2)^\perp.$$
From the continuity of the functions $p(.)$, $\rho(.,.)$ and $w(.,.)$, there exist $\rho_3 \in ]0,\min(\rh_1,\rh_2)[$ and 
$r_2 \in ]0,\min(r,r_1)[$ such that for all $u\in B(0,r_2)\subset C^{4,\a}(S^2)$ and $\bar{\rh}<\rh_3$ we have:

$\cdot$ $p(u) \in Z_c$,

$\cdot$ $\rh(\bar{\rh},u)<\rh_2$ and

$\cdot$ $w(p(u),u) \in C^{4,\a}(S^2)^\perp\cap B(0,r)$.

It follows that if $u\in B(0,r_2)$ and $\bar{\rh}<\rh_3$, the sphere $S_{\bar{p},\bar{\rh}}(u)$ can be realized as $S_{p(u), \rh(\bar{\rh},u)}[w(p(u),u)]$ which satisfies the assumptions \eqref{hp:wrho}; so it is not a critical point of $I$.
\end{pfnb}

\section{The conformal Willmore functional on perturbed standard spheres $S_p^\rh(w)$ in $(\Rtre, g_\e)$}\label{sec:Ie}
	
Throughout this section  $I_\epsilon(\mathring{M}):=\int_{\mathring{M}} \big[ \frac {H^2}{4}-D \big]d \Sigma_\e$ will be the conformal Willmore functional of the surface $\mathring{M}$ embedded in the ambient manifold $(\Rtre,g_\e)$, where $g_\e=\d+\e h$ is a perturbation of the euclidean metric ($h$ is a bilinear form with good decay properties at infinity, for simplicity we will treat in detail the case when $h$ has compact support but as one can see from the estimates it is enough to take $h$ fast decreasing. See for example \cite{Mon} Theorem 1.1).

The problem will be studied through a perturbation method relying on the Lyapunov-Schmidt reduction: In Subsection \ref{subsec:FDR} we will perform the abstract reduction, in Subsection \ref{subsec:ExpRF} we will compute an expansion of the reduced functional and in the last Subsection \ref{subsec:ProofEx} we will prove the main Theorems of this paper, that is the existence of conformal Willmore surfaces.

\subsection{The finite dimensional reduction}\label{subsec:FDR}
 We already know from Theorem \ref{TeoWillmin} that $I_0$ possesses a critical manifold made up of the standard spheres $S_p^\rho$ of $\Rtre$, we want to study the perturbed functional $I_\e$ near this critical manifold. First of all let us point out a clarification about $I_0'(S_p^\rh)$ and $I_0''(S_p^\rh)$, that are  the first and second variations of the unperturbed functional on the standard spheres, which will be useful throughout this Section.

\begin{rem}\label{rem:I0''}
In the previous paper \cite{Mon}, (remark 3.3, notice the factor difference in the definition of the Willmore functional) we observed that 
$$I_0'(S_p^\rh(w))=\frac {1}{2\rho^3} \triangle_{S^2}(\triangle_{S^2}+2)w+ \frac{1}{\rho^3} Q_p^{(2)(4)}(w)$$
and
$$I_0''(S_p^\rh)[w]=\frac {1}{2\rho^3} \triangle_{S^2}(\triangle_{S^2}+2)[w].$$
The sense of the two formulas were the following.
\\By definition $S_p^\rho(w)$ is a normal graph on $S_p^\rho$ with perturbation $\rh w$ (we chose the inward normal $\mathring{N}$ for all the computations), hence
$$I_0(S_p^\rho(w))=I_0(S_p^\rho)+\int_{S_p^\rho}\big(I_0'(S_p^\rh) (\rh w)\big) d\Sigma_0+ \frac{1}{2} \int_{S_p^\rho} \big(I_0''(S_p^\rh)[w] (\rho w)  \big) d\Sigma_0+o(|w|^2).$$
If we want to bring the expression to the standard sphere we get
$$I_0(S_p^\rho(w))=I_0(S_p^\rho)+\int_{S^2}\big(\rho^3 I_0'(S_p^\rh)  w\big) d\Sigma_0+ \frac{1}{2} \int_{S^2} \big(\rho^3 I_0''(S_p^\rh)[w] w  \big) d\Sigma_0 + o(|w|^2).$$
Now we denote $$\tilde{I}_0'(S_p^\rh(w))=\rho^3 I_0'(S_p^\rh(w))= \frac{1}{2}\triangle_{S^2}(\triangle_{S^2}+2)[w]+ Q_p^{(2)(4)}(w) $$ and  $$\tilde{I}_0'' (S_p^\rh)[w]=\rho^3 I_0''(S_p^\rh)[w]=\frac{1}{2} \triangle_{S^2}(\triangle_{S^2}+2)[w] $$
then we get the more familiar formula
$$I_0(S_p^\rho(w))=I_0(S_p^\rho)+\int_{S^2}\big(\tilde{I}_0'(S_p^\rh)  w\big)+ \frac{1}{2} \int_{S^2} \big(\tilde{I}_0''(S_p^\rh)[w] w  \big)+o(|w|^2).$$
This was about the functional $\int \frac{H^2}{4}$ but the same argument can be repeated for the functional $\int \big(\frac{H^2}{4}-D\big)$ (since the ambient is euclidean, $D=K$ the Gaussian curvature which by the Gauss Bonnet Theorem does not influence the differential).
Since $S_p^\rho$ are critical points for $I_0$ we can say that the conformal Willmore functional on perturbed standard spheres is 
 $$I_0(S_p^\rho(w))=\frac{1}{4} \int_{S^2} \Big( \triangle_{S^2}(\triangle_{S^2}+2)[w] w\Big)+o(|w|^2).	$$
In the following we will always denote
$$I_0'(S_p^\rh)[w]=\frac{1}{2} \triangle_{S^2}(\triangle_{S^2}+2)[w]+ Q_p^{(2)(4)}(w)$$		
$$I_0''(S_p^\rh)[w]=\frac{1}{2} \triangle_{S^2}(\triangle_{S^2}+2)[w]$$						
since, as we saw, it is more natural. 
\end{rem}

Since  from Proposition \ref{svilPhipr} we have an expansion of $I_\e$ on small geodesic spheres and on the other hand the critical manifold of $I_0$ is made up of standard spheres, let us link the two objects. The geodesic sphere in $(\Rtre, g_\e)$ of center $p$ and radius $\rh$ will be denoted by $S_{p,\rh}^\e$.
\begin{lem} \label{lemv}
For small $\e$ the geodesic spheres $S_{p,\rh}^\e$ are normal graphs on the standard spheres $S_p^\rh$ with a perturbation $v_\e \in C^\infty(\R^+ \times \Rtre \times S^2)$: 
$$S_{p,\rh}^\e= S_p^\rh(v_\e(\rh,p,.)).$$
Moreover the perturbation $v_\e$ satisfies the following decreasing properties:
\\1) $\rh v_\e=O(\e)$ in $C^k$ norm on compact subsets of $\R^+ \times \Rtre \times S^2$ for all $k \geq 0$;
\\2) $v_\e(\rh,.,.)=O(\rh)$ as $\rh \to 0$ uniformly for $\Theta \in S^2$ and $p$ in a compact subset of $\Rtre$.
\end{lem}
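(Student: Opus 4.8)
The plan is to produce $S^\e_{p,\rh}$ as the image under the $g_\e$-exponential map of the Euclidean sphere $\rh S^2\subset T_pM=\Rtre$ and then to reparametrise it radially over $S_p^\rh$; everything rests on the smooth dependence of geodesics on their data. Since $g_\e=\d+\e h$, the Christoffel symbols $\Gamma_\e$ are $O(\e)$, so the geodesic equation $\ddot\g^\mu+\Gamma^\mu_{\e,\a\b}(\g)\dot\g^\a\dot\g^\b=0$ has solutions depending smoothly on $(\e,p,\Theta)$ and on time. Writing $E_\e(\rh,p,\Theta):=Exp^\e_p[\rh\Theta]$ for the geodesic issued from $p$ with Euclidean--unit velocity $\Theta$ evaluated at time $\rh$, the rescaled map
\[
 e_\e(\rh,p,\Theta):=\frac{E_\e(\rh,p,\Theta)-p}{\rh}
\]
extends smoothly across $\rh=0$ (because $\g(t)-p=t\Theta+O(t^2)$ along each geodesic), is smooth jointly in $(\e,\rh,p,\Theta)$, satisfies $e_0\equiv\Theta$, and obeys $e_\e=\Theta+O(\e\rh)$, the remainder being smooth and vanishing at $\e=0$. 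Since at $\e=0$ the map $E_0(\rh,p,\Theta)=p+\rh\Theta$ is the Euclidean exponential, for $(\rh,p)$ in a compact set and $\e$ small the map $Exp^\e_p$ remains a diffeomorphism on the relevant ball, so $S^\e_{p,\rh}=E_\e(\rh,p,S^2)$ is an embedded sphere; for $p$ outside $\mathrm{supp}\,h$ the metric is flat and $S^\e_{p,\rh}=S_p^\rh$ exactly.

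Next I would pass from the parameter $\Theta$ to the Euclidean radial direction. The map
\[
 \Pi_\e(\rh,p,\cdot):\ \Theta\mapsto \frac{e_\e(\rh,p,\Theta)}{|e_\e(\rh,p,\Theta)|}\in S^2
\]
is smooth down to $\rh=0$ and satisfies $\Pi_\e=\mathrm{id}_{S^2}+O(\e\rh)$ in $C^k(S^2)$, uniformly for $p$ in a compact set and $\rh$ bounded. A $C^1$--small perturbation of the identity on the compact manifold $S^2$ is a diffeomorphism, so $\Pi_\e(\rh,p,\cdot)$ is invertible with smooth inverse $\Theta\mapsto\Theta_\e(\rh,p,\Theta)$. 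The point of $S^\e_{p,\rh}$ lying on the ray $\{p+t\Theta:t>0\}$ is then $E_\e(\rh,p,\Theta_\e)=p+\rh\,|e_\e(\rh,p,\Theta_\e)|\,\Theta$, so defining
\[
 v_\e(\rh,p,\Theta):=1-|e_\e(\rh,p,\Theta_\e(\rh,p,\Theta))|
\]
gives exactly $S^\e_{p,\rh}=S_p^\rh(v_\e(\rh,p,\cdot))$, and $v_\e\in C^\infty(\R^+\times\Rtre\times S^2)$ since every ingredient is smooth (and $|e_\e|$ is bounded away from $0$ for $\e$ small).

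The two decay properties now follow by inspection. From $e_\e=\Theta+O(\e\rh)$ and $\Theta_\e=\Theta+O(\e\rh)$ we get $|e_\e(\rh,p,\Theta_\e)|=1+O(\e\rh)$, hence
\[
 v_\e(\rh,p,\Theta)=O(\e\rh),\qquad \rh\,v_\e(\rh,p,\Theta)=O(\e\rh^2).
\]
On a compact subset of $\R^+\times\Rtre\times S^2$ the radius is bounded, so $\rh v_\e=O(\e)$; since the remainder in $e_\e=\Theta+O(\e\rh)$ is smooth and $\rh v_\e$ vanishes identically at $\e=0$, this bound holds in $C^k$ norm, which is property $1$. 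Fixing the small $\e$ and letting $\rh\to0$, the same identity gives $v_\e=O(\e\rh)=O(\rh)$ uniformly for $\Theta\in S^2$ and $p$ in a compact set, which is property $2$.

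The only delicate point is the degeneration of the parametrisation $\Theta\mapsto E_\e(\rh,p,\Theta)$ to the single point $p$ as $\rh\to0$: a naive use of $E_\e-p$ would lose a factor of $\rh$ and make neither the invertibility of $\Pi_\e$ nor the estimate $v_\e=O(\e\rh)$ uniform down to $\rh=0$. Factoring out the overall scale through the rescaled map $e_\e$ — which is smooth and nondegenerate at $\rh=0$ precisely because $\partial_t Exp^\e_p[t\Theta]|_{t=0}=\Theta$ — removes this difficulty and is exactly what makes both estimates uniform. The remaining ingredients, namely the smooth dependence of $E_\e$ on $(\e,p)$ and the triviality of the construction for $p\notin\mathrm{supp}\,h$, are routine.
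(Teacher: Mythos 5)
Your proof is correct, and it rests on the same foundations as the paper's (smooth dependence of geodesics on initial data and parameters, the fact that the Christoffel symbols of $g_\e$ are $O(\e)$, and compact support of $h$ guaranteeing global existence of the geodesics), but the execution differs in two respects. The paper linearizes in $\e$ from the outset, writing the geodesic as $y=p+\rh\Theta+\e u+o(\e)$ and deriving the ODE $\ddot u^i+\tilde\Gamma^i_{jk}\Theta^j\Theta^k=0$ for the first-order displacement $u$, from which it reads off $u=O(\rh^2)$ near $\rh=0$; it then simply asserts that ``it is easy to see'' that this displacement can be rewritten as a radial graph $v_\e$ over $S_p^\rh$. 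You instead work with the exact exponential map, desingularize at $\rh=0$ via the rescaled map $e_\e=(E_\e-p)/\rh$, and carry out explicitly the step the paper leaves implicit: the angular reparametrization $\Pi_\e=e_\e/|e_\e|$, its invertibility as a $C^1$-small perturbation of the identity on $S^2$, and the resulting closed formula $v_\e=1-|e_\e(\rh,p,\Theta_\e)|$. Your route buys a cleaner and more complete justification of the graph representation and of the uniformity of the estimates down to $\rh=0$ (the rescaling replaces the paper's observation that $u=O(\rh^2)$), at the cost of not producing the explicit linearized ODE for the leading-order displacement --- which the paper records but never actually uses. Both arguments restrict the quantitative estimates to compact subsets of $(\rh,p)$, consistently with the statement, so no generality is lost.
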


\begin{proof}
The geodesic spheres $S_{p,\rh}^\e$ are parametrized by $\Theta \mapsto Exp_p(\rho \Theta)$. So one is interested in the solution of the geodesic equation

\begin{displaymath} 
\left\{ \begin{array}{ll}
\ddot{y}^i+\Gamma ^i_{jk} \dot{y}^j \dot{y}^k = 0\\
y^i(0)=p^i \\
\dot{y}^i(0)=\Theta^i \\
\end{array} \right.
\end{displaymath}
evaluated at $\rho$. 
We look for $y^i$ of the form 
$$ y^i= p^i+\rh \Theta^i+ \e u^i +o(\e)$$
where $$u^i: \R^+ \times \Rtre \times S^2 \to \R,\quad (\rho,p,\Theta) \mapsto u^i(\rho,p,\Theta) $$
is $C^\infty (\R^+\times \Rtre \times S^2)$ and have to be determined. A straightforward computation ( setting $\Gamma ^i_{jk}=\e \tilde{\Gamma}^i_{jk}$ ) shows that $u^i$ must solve the following non linear second order ODE:
\begin{displaymath} 
\left\{ \begin{array}{ll}
\ddot{u}^i +\tilde{\Gamma}^i_{jk} \Theta^j \Theta^k=0\\
u^i(0)=0 \\
\dot{u}^i(0)=0 \\
\end{array} \right.
\end{displaymath}
where we have denoted $\dot{u}^i=\frac{\partial}{\partial \rho} u^i$ and $\ddot{u}^i=\frac{\partial^2}{\partial \rho^2} u^i$ and the equation has to be considered at $(p,\Theta)$ fixed.
\\Since $h$ is compactly supported (more generally it is enough to assume that $h$  and its first derivatives vanish at infinity), the Christoffel symbols $\tilde{\Gamma}^i_{jk}$ vanish at infinity and the ODE admits unique solution defined for all $\rho \geq 0$. From differentiable dependence on parameters, $u^i$ is of class $C^\infty (\R^+\times \Rtre \times S^2)$, observe also that $u^i=O(\rho^2)$ as $\rho \to 0$ uniformly for $\Theta \in S^2$ and $p$ in a compact subset of $\Rtre$.
\\It follows that the geodesic sphere $S_{p,\rh}^\e$ can be obtained from the standard sphere $S_p^\rh$ with the small variation $\e u^i(\rh,p, \Theta)$. Now it is easy to see that for $\e$ small enough there exists $v_\e \in C^\infty(\R^+ \times \Rtre \times S^2)$ such that 
\begin{itemize}
	\item $S_{p,\rho}^\e=S_p^\rho (v_\e)$ 
	\item $\rh v_\e =O(\e)$ in $C^k$ norm on compact subsets  of $\R^+\times \Rtre \times S^2$ for all $k \geq 0$ 
	\item $v_\e(\rh,\Theta)=O(\rh)$ as $\rh \to 0$, uniformly for $\Theta \in S^2$ and $p$ in a compact subset of $\Rtre$.
\end{itemize}

\end{proof}

Now we define the manifold of approximate solutions that will play the role of the ``critical manifold'' $Z$.
Let $R_1$ and $R_2$ be positive real numbers to be determined and $\chi$ a $C^\infty(\R^+)$ cut off function such that

\begin{displaymath} 
\left\{ \begin{array}{ll}
\chi(\rh)=1 \text{  for $0\leq\rh\leq R_1$} \\
0\leq \chi(\rh)\leq 1 \text{  for $R_1\leq\rh\leq R_2$} \\
\chi(\rho)=0  \text{  for $\rh \geq R_2$}. 
\end{array} \right.
\end{displaymath}
We denote with $\Sigma_{p,\rh}^\e$ the perturbed standard sphere
\begin{equation}\label{def:Sigma}
\Sigma_{p,\rh}^\e= S_p^\rh(\chi v_\e)
\end{equation}
and we consider it as parametrized on $S^2$; observe that for $\rho<R_1$ one gets the geodesic spheres $\Sigma_{p,\rh}^\e=S_{p,\rho}^\e$ and for $\rho>R_2$ one has the standard spheres $\Sigma_{p,\rh}^\e=S_p^\rho$.
\\Denoted by $\mathring{N}$ the inward normal unit vector, given a  function $w$ on $S^2$,  
$\Sigma_{p,\rh}^\e(w)$ will be the surface parametrized by $\Sigma_{p,\rh}^\e+\rh w \mathring{N}$ (notice that we are consistent with the previous notations since $\Theta$ points outward).

At this point we can state the two Lemmas which allow us to perform the Finite Dimensional Reduction. Recall that, as always, $P:L^2(S^2)\to Ker[\triangle_{S^2}(\triangle_{S^2}+2)]^\perp$ is the orthogonal projection.

\begin{lem} \label{lemw}
For each compact subset $Z_c\subseteq  \Rtre\oplus \R^+$, there exist $\epsilon_0>0$ and $r>0$ with the following property: for all $|\epsilon|\leq \epsilon_0$ 
and $(p,\rho)\in Z_c$, the auxiliary equation $PI_\epsilon'(\Sigma_{p,\rh}^\e(w))=0$ has unique solution $w=w_\epsilon (p,\rho)\in B(0,r) \subset C^{4,\a}(S^2)^\perp$ such that:
\\1) the map $w_\epsilon (.,.): Z_c \to C^{4,\alpha}(S^2)^\perp $ is of class $C^1$; 
\\2) $\|w_\epsilon (p,\rho)\|_{C^{4,\alpha}(S^2)}\to 0$ for 
$\epsilon \to 0$ uniformly with respect to $(p,\rho) \in Z_c$;
\\3) more precisely $\|w_\epsilon (p,\rho)\|_{C^{4,\alpha}(S^2)}= O(\epsilon)$ for 
$\epsilon \to 0$ uniformly in $(p,\rho) \in Z_c$;
\\4) $\|w_\e(p,\rh)\|_{C^{4,\a}}=O(\rh^2)$ uniformly for $p$ in the compact set. 
\end{lem}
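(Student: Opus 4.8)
The plan is to solve the auxiliary equation $PI_\e'(\Sigma_{p,\rh}^\e(w))=0$ by a Banach fixed point argument, in the same spirit as Lemma \ref{lemEqAuxContr} and \cite{Mon}. First I would expand the differential of $I_\e$ at the approximate solution $\Sigma_{p,\rh}^\e(w)$. Since $g_\e=\delta+\e h$ reduces to the euclidean metric at $\e=0$ --- where, by Remark \ref{rem:I0''}, $I_0'(S_p^\rh)[w]=\tfrac12\triangle_{S^2}(\triangle_{S^2}+2)w+Q_p^{(2)(4)}(w)$ --- and since the curvature tensors of $g_\e$ are $O(\e)$, the differential can be written in the form
$$I_\e'(\Sigma_{p,\rh}^\e(w))=\tfrac12\triangle_{S^2}(\triangle_{S^2}+2)w+\mathcal{R}_\e(p,\rh)+N_{\e,p,\rh}(w),$$
where $\mathcal{R}_\e(p,\rh):=I_\e'(\Sigma_{p,\rh}^\e)$ is the residual evaluated at $w=0$, and $N_{\e,p,\rh}(w)$ gathers the nonlinear terms of type $Q_p^{(2)(4)}(w)$ together with the $\e$-dependent terms that are linear in $w$; by construction $N_{\e,p,\rh}(0)=0$.

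The second step is to convert the projected equation into a fixed point problem. The operator $\triangle_{S^2}(\triangle_{S^2}+2)$ is a fourth-order elliptic operator whose kernel is exactly $Ker[\triangle_{S^2}(\triangle_{S^2}+2)]$, so it is invertible on $C^{4,\alpha}(S^2)^\perp$ with inverse $K:=[\triangle_{S^2}(\triangle_{S^2}+2)]^{-1}$ bounded from $C^{0,\alpha}$ to $C^{4,\alpha}$ by Schauder estimates; crucially $K$ is intrinsic to $S^2$ and hence independent of $(p,\rh,\e)$. Applying $P$ and $K$, the equation $PI_\e'(\Sigma_{p,\rh}^\e(w))=0$ becomes, for $w\in C^{4,\alpha}(S^2)^\perp$, the fixed point problem
$$w=-2K\,P\big[\mathcal{R}_\e(p,\rh)+N_{\e,p,\rh}(w)\big]=:F_{\e,p,\rh}(w).$$

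The third step is the contraction estimate. Using Lemma \ref{lemv}, so that $\Sigma_{p,\rh}^\e$ is an $O(\e)$ normal graph over the exact critical sphere $S_p^\rh$, one obtains the residual bound $\|\mathcal{R}_\e(p,\rh)\|_{C^{0,\alpha}}=O(\e)$ uniformly on $Z_c$ (it vanishes at $\e=0$, since $v_0=0$ and $I_0'(S_p^\rh)=0$), and, in the small-radius region where $\Sigma_{p,\rh}^\e=S_{p,\rh}^\e$, the sharper bound $O(\rh^2)$ inherited from the geodesic-sphere computation underlying Lemma \ref{lemEqAuxContr} and Proposition \ref{lem:svilI'}. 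The property \eqref{stimaQ} of the operators $Q_p^{(2)(4)}$ gives $\|N_{\e,p,\rh}(w_2)-N_{\e,p,\rh}(w_1)\|\le C(r+\e)\|w_2-w_1\|$ on $B(0,r)$. Choosing $r$ and $\e_0$ small enough makes $F_{\e,p,\rh}$ map $B(0,r)\subset C^{4,\alpha}(S^2)^\perp$ into itself with Lipschitz constant $\le\tfrac12$, uniformly in $(p,\rh)\in Z_c$ and $|\e|\le\e_0$; the Banach fixed point theorem then produces the unique solution $w_\e(p,\rh)$. The elementary bound $\|w_\e(p,\rh)\|\le 2\|F_{\e,p,\rh}(0)\|=2\|KP\mathcal{R}_\e(p,\rh)\|$ transfers the two residual estimates to $w_\e$, yielding properties $2$ and $3$ (from the $O(\e)$ bound) and property $4$ (from the $O(\rh^2)$ bound in the small-radius asymptotics). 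Finally property $1$, the $C^1$ dependence on $(p,\rh)$, follows from the implicit function theorem applied to $(w,p,\rh)\mapsto PI_\e'(\Sigma_{p,\rh}^\e(w))$, whose partial differential in $w$ at the solution is $\tfrac12\triangle_{S^2}(\triangle_{S^2}+2)$ perturbed by an $O(r+\e)$ term, hence invertible.

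I expect the main obstacle to be the uniform control of the residual $\mathcal{R}_\e(p,\rh)$ over the full range of radii, together with the uniformity of the Schauder constants: one must reconcile the $O(\e)$ estimate used in the standard-sphere regime ($\rh>R_2$) with the $O(\rh^2)$ estimate used in the geodesic-sphere regime ($\rh<R_1$) across the transition zone where the cut-off $\chi$ is active, while keeping all constants independent of $(p,\rh)\in Z_c$.
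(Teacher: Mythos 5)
Your proposal follows essentially the same route as the paper's (admittedly sketchy) proof: both reduce the projected equation to a fixed point problem for the operator $K=[\triangle_{S^2}(\triangle_{S^2}+2)]^{-1}$ on $C^{4,\alpha}(S^2)^\perp$ via Schauder estimates, derive the $O(\e)$ bound from the residual $I_\e'(\Sigma_{p,\rh}^\e)=O(\e)$ (using $I_0'(S_p^\rh)=0$ and $\|v_\e\|=O(\e)$), and obtain the $O(\rh^2)$ bound from the geodesic-sphere regime of Lemma \ref{lemEqAuxContr}. The only cosmetic difference is that the paper splits explicitly into the three regimes $\rh\leq R_1$, $R_1\leq\rh\leq R_2$, $\rh\geq R_2$, whereas you present a unified contraction and flag the matching across the cut-off zone as the remaining bookkeeping; the substance is the same.
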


\begin{proof}
The proof will be rather sketchy, for more details we refer to Section 4 of \cite{Mon}.

$\bullet$ $\rh\leq R_1$:  Recall Lemma \ref{lemEqAuxContr} and choose $R_1=\rho_0$;  for $\rh \leq R_1$, the surface $\Sigma_{p,\rh}^\e$ coincides with the geodesic sphere $S_{p,\rh}$, so thanks to Lemma \ref{lemEqAuxContr} there exists a unique  $w_\e(p,\rh) \in C^{4,\a}(S^2)^\perp$ which solves the auxiliary equation.
During the proof of Proposition \ref{lem:svilI'} we wrote $I'$ as in equation \eqref{eq:diffI}; observing that all the curvature tensors of $(\Rtre, g_\e)$ are of order $O(\e)$ (in $C^k$ norm $\forall k \in \N$ on each fixed compact set of $\Rtre$),  it follows that $$PI_\e'(S_{p,\rh}^\e(w_\e(p,\rh)))=\frac{1}{2}\triangle_{{S_{p,\rh}^\e(w_\e(p,\rh))}}H+Q^{(2)(4)}(w_\e(p,\rh))+O(\e)=0 \quad \text{in }C^{0,\a}(S^2);$$  
from this formula and the expansions of $\mathring{h}$, $\mathring{g}^{-1}$ and $H$, we have that 
$$\triangle_{S^2}(\triangle_{S^2}+2)[w_\e(p,\rh)]+Q^{(2)(4)}(w_\e(p,\rh))=O(\e) \quad \text{in }C^{0,\a}(S^2)$$ 
uniformly for $(p,\rh) \in Z_c$; first observe that $\|w_\e\|_{C^{4,\a}(S^2)}\to 0$ as $\e \to 0$ uniformly in $Z_c$ so the second summand is negligible, then conclude that  $\|w_\e\|_{C^{4,\a}(S^2)}=O(\e)$ uniformly on $Z_c$. The other properties follow from Lemma \ref{lemEqAuxContr}.

$\bullet$ $\rh\geq R_2$: in this case the surface $\Sigma_{p,\rh}^\e$ coincides with the standard sphere $S_p^\rh$ for which the discussion has already been done in Lemma 4.1 of \cite{Mon}.

$\bullet$ $R_1\leq\rh \leq R_2$: with a Taylor expansion the auxiliary equation becomes
$$0=PI_\e'(\Sigma_{p,\rh}^\e(w_\e))=PI_\e'(\Sigma_{p,\rh}^\e)+PI_\e''(\Sigma_{p,\rh}^\e)[w_\e]+o(\|w_\e\|_{C^{4,\a}(S^2)}).$$
But by definition $\Sigma_{p,\rh}^\e=S_p^\rh(\chi v_\e)$, so
$$I_\e'(\Sigma_{p,\rh}^\e)= I_\e'(S_p^\rh(\chi v_\e))=I_0'(S_p^\rh)+I_0''(S_p^\rh)[\chi v_\e]+O(\e). $$
Since $I_0'(S_p^\rh)=0$ and $\|v_\e\|_{C^{4,\a}}=O(\e)$ we get
$$\|I_\e'(\Sigma_{p,\rh}^\e)\|_{C^{0,\a}(S^2)}=O(\e).$$
Now $PI_0''(S_p^ \rh)=\triangle_{S^2}(\triangle_{S^2}+2)$ which is an invertible map ${C^{4,\a}}^\perp \to {C^{0,\a}}^\perp$ uniformly on $Z_c$; since the set of invertible operators is open, for $\e$ small also $PI_\e''(S_p^ \rh)$ is uniformly invertible. From the fact that $\|v_\e\|_{C^k(S^2)}=O(\e)$ for all $k$ it follows that also $PI_\e''(\Sigma_{p,\rh}^\e)=PI_\e''(S_p^ \rh(\chi v_\e ))$ is uniformly invertible on $Z_c$.
With a fixed point argument analogous to the proof of Lemma 4.1 in \cite{Mon} it is possible to show that there exist $r>0$ and a unique solution $w_\e\in B(0,r)\subset {C^{4,\a}}^\perp$  of
$$w_\e=-PI_\e''(\Sigma_{p,\rh}^\e)^{-1} \Big( PI_\e'(\Sigma_{p,\rh}^\e) +o(\|w_\e\|_{C^{4,\a}(S^2)})\Big)$$
with the desired properties.
\end{proof}
 
Now we are in position to define the reduced functional $\Phi_\e(p,\rh)=I_\e(\Sigma_{p,\rh}^\e(w_\e(p,\rh)))$ and to state the following fundamental Lemma:

\begin{lem} \label{lemRFD}
Fixed a compact set $Z_c\subseteq  \Rtre\oplus \R^+$, for $|\epsilon|\leq \epsilon_0$ consider the functional $\Phi_\epsilon: Z_c\to \R$.
\\Assume that, for $\epsilon$ small enough, $\Phi_\epsilon$ has a critical point $(p_\epsilon ,\rho_\epsilon ) \in Z_c$. Then 
$\Sigma_{p_\epsilon,\rho_\epsilon}^\e (w_\epsilon (p_\epsilon, \rho_\epsilon ))$ is a critical point of $I_\epsilon$.
\end{lem}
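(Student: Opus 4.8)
The plan is to show that the auxiliary equation already annihilates the component of $I_\e'$ orthogonal to the kernel, and that the criticality of $\Phi_\e$ annihilates the remaining finite-dimensional component. Write $K:=Ker[\triangle_{S^2}(\triangle_{S^2}+2)]$, so that $P$ is the orthogonal projection of $L^2(S^2)$ onto $K^\perp$ and $I-P$ is the projection onto $K$; recall that $K$ is spanned by the constant and the three affine functions $\{1,x^1,x^2,x^3\}$ restricted to $S^2$. Throughout I would interpret $I_\e'(\mathring{M})$ as the $L^2(S^2)$-representative of the first variation of $I_\e$ with respect to normal perturbations (the Euler--Lagrange expression of Proposition \ref{lem:svilI'}, with its Jacobian factor absorbed), so that all pairings below are the $L^2(S^2)$ pairing with respect to which $P$ is orthogonal.

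First I would record what Lemma \ref{lemw} provides: for every $(p,\rh)\in Z_c$ and $|\e|\le\e_0$ the function $w_\e(p,\rh)$ solves $P\,I_\e'(\Sigma_{p,\rh}^\e(w_\e(p,\rh)))=0$, which is precisely the statement that
$$I_\e'(\Sigma_{p,\rh}^\e(w_\e(p,\rh)))\in K\qquad\text{for all }(p,\rh)\in Z_c.$$
Hence, to prove that $u_\e:=\Sigma_{p_\e,\rh_\e}^\e(w_\e(p_\e,\rh_\e))$ is critical for $I_\e$, it suffices to show that this surviving kernel component vanishes at $(p_\e,\rh_\e)$.

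Next I would differentiate the reduced functional. Writing $z=(p,\rh)\in\Rtre\oplus\R^+$ for the parameters and $u_\e(z)=\Sigma_{z}^\e(w_\e(z))$, the chain rule gives, for each coordinate direction $z_a$ ($a=1,\dots,4$),
$$\partial_{z_a}\Phi_\e(z)=\big\langle I_\e'(u_\e(z)),\,N_a(z)\big\rangle_{L^2(S^2)},$$
where $N_a(z)$ is the normal component (a function on $S^2$) of $\partial_{z_a}u_\e(z)$; only the normal part enters because $I_\e$ is invariant under reparametrizations, so tangential variations do not change it. Since $I_\e'(u_\e(z))\in K$, the pairing only detects the $K$-part of $N_a$, i.e. $\partial_{z_a}\Phi_\e(z)=\langle I_\e'(u_\e(z)),(I-P)N_a(z)\rangle$. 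At a critical point $z_\e=(p_\e,\rh_\e)$ all four derivatives vanish, so $I_\e'(u_\e(z_\e))$ is $L^2$-orthogonal to $(I-P)N_1(z_\e),\dots,(I-P)N_4(z_\e)$.

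The key step, which I expect to be the main obstacle, is to show that these four functions $(I-P)N_a(z_\e)$ span $K$: granting this, $I_\e'(u_\e(z_\e))\in K$ is orthogonal to all of $K$ and must therefore vanish, which is the assertion. To establish the spanning I would argue by continuity in $\e$. At $\e=0$ one has $w_0\equiv0$ and $\Sigma_{z}^0=S_p^\rh$, and the normal components of $\partial_\rh(p+\rh\Theta)=\Theta$ and $\partial_{p_k}(p+\rh\Theta)=e_k$ are (up to nonzero constants and sign) exactly $1,x^1,x^2,x^3$, a basis of $K$; in particular the Gram matrix $\big[\langle(I-P)N_a,(I-P)N_b\rangle\big]_{a,b}$ is invertible at $\e=0$. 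Because $w_\e(z)=O(\e)$ in $C^{4,\a}(S^2)$ by Lemma \ref{lemw} and $\rh v_\e=O(\e)$ by Lemma \ref{lemv}, the map $z\mapsto u_\e(z)$, and hence the functions $(I-P)N_a(z)$, depend continuously on $\e$ uniformly on the compact set $Z_c$. Consequently the Gram matrix remains invertible for $|\e|\le\e_0$ after possibly shrinking $\e_0$, so $\{(I-P)N_a(z_\e)\}_{a=1}^4$ is still a basis of the four-dimensional space $K$, and the proof concludes as above.
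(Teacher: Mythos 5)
Your proof is correct and is essentially the standard ``natural constraint'' argument that the paper relies on: the auxiliary equation forces $I_\e'(u_\e(z))$ into $Ker[\triangle_{S^2}(\triangle_{S^2}+2)]$, criticality of $\Phi_\e$ makes it orthogonal to the normal components of the coordinate variations, and these span the kernel for small $\e$ by continuity from the unperturbed case $\{1,x^1,x^2,x^3\}$. The paper itself only cites Lemma 4.2 of \cite{Mon} together with the decay of $v_\e$, $w_\e$ and their derivatives, which is precisely the continuity/uniformity input you invoke to keep the Gram matrix invertible on the compact $Z_c$.
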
 

\begin{proof}
The proof is a slight modification of the proof of Lemma 4.2 in \cite{Mon} just using the good decay properties of $v_\e$, $w_\e$ and their derivatives as $\e \to 0$.
\end{proof}

\begin{rem}\label{rem:PhiR}
The reduced functional $\Phi_\e$ is defined for small $\e$ once a compact $Z_c \subset \Rtre \oplus \R^+$ is fixed. In the following discussion we will study the behaviour of $\Phi_\e$ for large $\rho$; this makes sense since the compact $Z_c$ can be chosen arbitrarily large and the solution of the auxiliary equation $w_\e(p,\rh)$ given in Lemma \ref{lemw} is unique in a small ball of $C^{4,\a}(S^2)^{\perp}$. However the compact $Z_c$ will be chosen in a rigorous and appropriate way in the proofs of Theorem \ref{teoExist} and Theorem \ref{teoExist2}.  
\end{rem}

\subsection{Expansion of the reduced functional $I_\e(\Sigma_{p,\rh}^\e (w_\e(p,\rh)))$}\label{subsec:ExpRF}

Since Lemma \ref{lemw}  applies, we can perform the Finite Dimensional Reduction. In this Subsection we will study the reduced functional 
$\Phi_\e(p,\rho)=I_\e(\Sigma_{p,\rho}^\e(w_\e(p,\rh)))$.
For $\rho<R_1$, $\Sigma_{p,\rho}^\e=S_{p,\rho}^\e$ so for small radius $\rho$ we have the explicit expansion of $\Phi_\e(p,\rho)=I_\e(S_{p,\rho}^\e(w_\e(p,\rho)))$  given by Proposition \ref{svilPhipr}.
More generally, for all the radius we can write the conformal Willmore functional on our surfaces $\Sigma_{p,\rh}^\e (w)$ as

\begin{equation}\label{eq:IG}
I_\e(\Sigma_{p,\rh}^\e (w))=I_0(\Sigma_{p,\rh}^\e (w))+\e G_1(\Sigma_{p,\rh}^\e (w))+\e^2 G_2(\Sigma_{p,\rh}^\e (w))+o(\e^2).
\end{equation}

Now let us study the case $\rho>R_2$, when $\Sigma_{p,\rh}^\e =S_p^\rho$; in this circumstance we get the formula
\begin{equation}\label{eq:IGS}
I_\e(S_p^\rho (w))=I_0(S_p^\rho (w))+\e G_1(S_p^\rho (w))+\e^2 G_2(S_p^\rho (w))+o(\e^2). 
\end{equation}

\begin{lem}\label{lem:G1}
For all standard spheres $S_p^\rho$ one has 
$$I_0(S_p^\rho)=G_1(S_p^\rho)=0.$$
\end{lem}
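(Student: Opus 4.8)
The plan is to exploit the fact that the conformal Willmore integrand is a perfect square, which forces $\e=0$ to be a minimum of the scalar function $\e\mapsto I_\e(S_p^\rho)$ and thereby kills both the zeroth- and first-order terms of its $\e$-expansion.

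First I would record the pointwise algebraic identity
$$\frac{H^2}{4}-D=\frac{(k_1+k_2)^2}{4}-k_1k_2=\frac{(k_1-k_2)^2}{4},$$
which holds for \emph{any} immersed surface in \emph{any} Riemannian three-manifold, since it uses only the conventions $H=k_1+k_2$ and $D=k_1k_2$ fixed in the ``notations and conventions''. Consequently, for every $\e$ small enough that $g_\e=\delta+\e h$ is positive definite,
$$I_\e(S_p^\rho)=\frac{1}{4}\int_{S_p^\rho}(k_1^\e-k_2^\e)^2\,d\Sigma_\e\ \geq\ 0,$$
where $k_1^\e,k_2^\e$ are the principal curvatures of the fixed Euclidean round sphere $S_p^\rho$ computed with respect to $g_\e$ (these are real, being eigenvalues of a symmetric shape operator relative to a positive definite first fundamental form).

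At $\e=0$ the ambient metric is Euclidean, so the round sphere is totally umbilic, $k_1^0=k_2^0=1/\rho$; hence the integrand vanishes identically and $I_0(S_p^\rho)=0$, which is the first asserted equality. For the second, set $f(\e):=I_\e(S_p^\rho)$. Since $h\in C_0^\infty(\Rtre)$ and $S_p^\rho$ is compact, $g_\e$ is a metric for $|\e|$ small and $f$ depends smoothly on $\e$; note that $H^2/4-D$ is a symmetric polynomial function of the shape operator, hence smooth even across umbilic points, so no eigenvalue-crossing issue arises. By the previous steps $f(\e)\geq 0=f(0)$ for all small $|\e|$, so $\e=0$ is a local minimum and $f'(0)=0$. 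Comparing with the expansion $f(\e)=I_0(S_p^\rho)+\e\,G_1(S_p^\rho)+\e^2 G_2(S_p^\rho)+o(\e^2)$ of \eqref{eq:IGS} yields $f(0)=I_0(S_p^\rho)=0$ and $f'(0)=G_1(S_p^\rho)$, whence $G_1(S_p^\rho)=0$.

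The only points needing care — and the mildest obstacle — are the positive definiteness of $g_\e$ for both signs of small $\e$ (immediate, as $g_\e$ is a $C^0$-small perturbation of $\delta$ on the compact support of $h$) and the $C^1$ dependence of $f$ on $\e$; both are routine given the hypotheses. All the conceptual content sits in recognizing the sign-definiteness of the integrand, after which the statement follows with essentially no computation.
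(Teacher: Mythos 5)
Your proof is correct, but it follows a genuinely different route from the paper's. You observe that the conformal Willmore integrand is the umbilicity deficit $\frac{H^2}{4}-D=\frac{(k_1-k_2)^2}{4}\geq 0$, so that $\e\mapsto I_\e(S_p^\rho)$ is a nonnegative smooth function vanishing at $\e=0$ (the round sphere being totally umbilic in the flat metric), and both $I_0(S_p^\rho)=0$ and $G_1(S_p^\rho)=f'(0)=0$ drop out of the first-derivative test at an interior minimum; your care about smoothness of $H^2/4-D$ as a symmetric polynomial in the shape operator (rather than of $k_1,k_2$ individually) and about positive definiteness of $g_\e$ for both signs of $\e$ covers the only delicate points. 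The paper instead proves the lemma by brute force: it expands the area form, the unit normal $\nu_\e=\nu_0+\e N+o(\e)$, the Christoffel symbols and the second fundamental form to first order in $\e$, computes $\int\frac{H_\e^2}{4}\,d\Sigma_\e$ and $\int D_\e\,d\Sigma_\e$ separately, and checks that every first-order term cancels in the difference. Your argument is shorter, more conceptual, and explains \emph{why} the expansion must be degenerate (it even gives $G_2(S_p^\rho)=\frac{1}{2}f''(0)\geq 0$ for free); the paper's computation, on the other hand, is not wasted effort, since the explicit first-order formulas for $N$, $A_{\mu\nu\lambda}$ and $\mathring{h}_{\e ij}$ derived in its proof are exactly the ingredients reused in Lemma \ref{lem:G1'G2} to estimate $G_1'(S_p^\rho)$ and $G_2(S_p^\rho)$, which a purely soft positivity argument cannot deliver.
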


\begin{proof}
As above, we write the functional as $I_\e(S_p^\rho)=I_0(S_p^\rho)+\e G_1(S_p^\rho)+o(\e)$.
First let us expand in $\epsilon$ the geometric quantities of interest starting from the area form $d\Sigma _\epsilon:=\sqrt{E_\epsilon G_\epsilon-F_\epsilon^2}$.
\begin{eqnarray}
E_\epsilon &=& g_\e(\theta _1,\theta _1)=(\theta _1,\theta _1)+\epsilon  h(\theta _1, \theta _1)=E_0+\epsilon  h(\theta _1, \theta _1) \nonumber \\
F_\epsilon &=&F_0+\epsilon  h(\theta _1, \theta _2)=\epsilon  h(\theta _1, \theta _2) \nonumber \\
G_\epsilon &=&G_0+\epsilon  h(\theta _2, \theta _2), \nonumber
\end{eqnarray}
where $(.,.)$ denotes the euclidean scalar product and $E_0,F_0,G_0$ are the coefficients of the first fundamental form in euclidean metric. 
The area form can be expanded as
\begin{eqnarray}
d\Sigma _\epsilon &:=&\sqrt{E_\epsilon G_\epsilon-F_\epsilon^2}\nonumber \\
&=&\sqrt{E_0 G_0+ \epsilon \big( E_0 h(\theta _2, \theta _2)+G_0 h(\theta _1, \theta _1)\big) +o(\epsilon) },\nonumber 
\end{eqnarray} 
where the remainder $o(\epsilon)$ is uniform fixed the compact set in the variables $(p,\rho), \rho>0$.
\\Using the standard Taylor expansion $\sqrt{a+bx+cx^2}=\sqrt{a}+\frac{1}{2} \frac{b}{\sqrt{a}}x+o(x)$, we get
\begin{equation}\label{svilFormaArea}
\sqrt{E_\epsilon G_\epsilon- F_\epsilon ^2}=\sqrt{E_0 G_0}+\frac{\epsilon}{2} \frac{E_0 h(\theta _2, \theta _2)+G_0 h(\theta _1, \theta _1)}{\sqrt{E_0 G_0}}+o(\epsilon ),
\end{equation}
where the remainder $o(\epsilon)$ is uniform fixed the compact set in the variables $(p,\rho)$.

Now let us expand the second fundamental form. 
\\First of all we have to find an expression of the inward normal unit vector  $\nu_\epsilon$ on $S_p ^\rho$ in metric $g_\epsilon$.
\\We look for $\nu_\epsilon$ of the form
$$\nu_\epsilon=\nu_0+\epsilon N+ o(\epsilon) $$
where $\nu_0=-\Theta$ is the inward normal unit vector on $S_p ^\rho$ in euclidean metric and the remainder is $o(\epsilon)$ uniformly fixed the compact in $(p,\rho)$.
From the orthogonality conditions $g_\epsilon(\theta_1, \nu_\epsilon)=0$ and $g_\epsilon(\theta_2, \nu_\epsilon)=0$, we get
\begin{eqnarray}
0&=&g_\epsilon(\theta _1,\nu _\epsilon )=(\theta _1,\nu _0)+ \epsilon (\theta_1, N)+\epsilon h(\theta _1,\nu _0)+o(\epsilon) \nonumber \\
0&=&g_\epsilon(\theta _2,\nu _\epsilon )=(\theta _2,\nu _0)+ \epsilon (\theta_2, N)+\epsilon h(\theta _2,\nu _0)+o(\epsilon) \nonumber 
\end{eqnarray}
from which, being $\nu_0$ the euclidean normal vector to $S_p ^\rho$,
\begin{eqnarray}
(N,\theta _1)&=&-h(\nu _0 ,\theta _1) \label{N,theta1} \\
(N,\theta _2)&=&-h(\nu _0 ,\theta _2) \label{N,theta2}.
\end{eqnarray}
Imposing the normalization condition on $\nu_\epsilon$ we obtain
$$1=g_\epsilon (\nu _\epsilon ,\nu _\epsilon )=(\nu _0,\nu _0)+2 \epsilon (\nu _0,N)+\epsilon h(\nu _0,\nu _0)+ o(\epsilon )$$
from which, being $(\nu_0,\nu_0)=1$
\begin{equation}\label{N,nu0}
(N,\nu _0)= - \frac{1}{2}h(\nu _0 ,\nu _0).
\end{equation}
Denote with $\bar{\theta}_i=\frac{\theta_i}{|\theta_i|}$ the normalized tangent vectors; since $(\bar{\theta}_1,\bar{\theta}_2,\nu_0)$ are an orthonormal base, the expressions  \eqref{N,theta1},\eqref{N,theta2},\eqref{N,nu0} characterize univocally $N$, which can be written in this base as
\begin{equation}\label{eqN}
N= -h(\nu _0 ,\bar{\theta}_1)\bar{\theta}_1-h(\nu _0 ,\bar{\theta}_2)\bar{\theta}_2- \frac{1}{2}h(\nu _0 ,\nu _0)\nu _0.
\end{equation}
Knowing the normal vector we can evaluate the coefficients of the second fundamental form 
$$\mathring{h}_{\epsilon ij} :=-g_\epsilon (\nabla_{\theta_i} \nu_\epsilon, \theta_j),$$
where $\nabla$ is the connection on $\Rtre$ endowed with the metric $g_\epsilon$.
By linearity, denoting with $\frac{\partial }{\partial x^\lambda}$ the standard euclidean frame  of $\Rtre$
\begin{eqnarray}
\nabla_{\theta_i} \nu_\epsilon &=& \theta _i ^\mu \nabla _\mu (\nu _\epsilon ^\lambda \frac{\partial }{\partial x^\lambda} ) =  \frac{\partial \nu _\epsilon }{\partial \theta ^i} + \theta _i ^\mu \nu _\epsilon ^\lambda \Gamma _{\mu \lambda} ^\nu \frac{\partial }{\partial x^\nu} \nonumber \nonumber
\end{eqnarray} 
where $\Gamma _{\mu \lambda} ^\nu$ are the Christoffel symbols of $(\Rtre,g_\epsilon)$.
\\Let us find an expansion in $\epsilon$ of  $\Gamma _{\mu \lambda} ^\nu$. By definition
$$\Gamma _{\mu \lambda} ^\nu = \frac{1}{2}g^{\nu\sigma } [D_\mu g_{\lambda \sigma}+D_\lambda g_{\sigma\mu}-D_\sigma g _{\mu \lambda}].$$
Noticing that $g^{\mu\sigma }=\delta^{\mu\sigma }-\epsilon h_{\mu\sigma }+o(\epsilon)$ and $D_\mu g_{\lambda \sigma}=\epsilon D_\mu h _{\lambda \sigma}$, we obtain
\begin{eqnarray}
\Gamma _{\mu \lambda} ^\nu &=& \frac{1}{2}\epsilon \delta^{\nu\sigma } [D_\mu h_{\lambda \sigma}+D_\lambda h_{\sigma\mu}-D_\sigma h_{\mu \lambda}]+o(\epsilon) \nonumber\\
&=&\frac{1}{2}\epsilon \delta^{\nu\sigma } A_{\mu\sigma\lambda} \nonumber\\
\end{eqnarray}
where we set
\begin{equation}\label{defA}
A_{\mu \nu\lambda}:= [D_\mu h_{\lambda \nu}+D_\lambda h_{\nu\mu}-D_\nu h_{\mu \lambda}].
\end{equation}
Hence
$$\nabla_{\theta_i} \nu_\epsilon = \frac{\partial \nu _\epsilon }{\partial \theta ^i} + \frac{1}{2}\epsilon \theta _i ^\mu \nu _0 ^\lambda  \delta^{\nu\sigma } A_{\mu \sigma \lambda}\frac{\partial }{\partial x^\nu}+o(\epsilon)$$
and the second fundamental form becomes
\begin{equation}\label{formulah}
\mathring{h}_{\epsilon ij}=-\bigg(\frac{\partial \nu _0 }{\partial \theta ^i},\theta _j \bigg)-\epsilon \bigg[ h\bigg(\frac{\partial \nu _0 }{\partial \theta ^i}, \theta _j\bigg) +\bigg(\frac{\partial N }{\partial \theta ^i}, \theta _j\bigg) \bigg]-\frac{1}{2}\epsilon\theta _i ^\mu \theta _j ^\nu \nu _0 ^\lambda  A_{\mu\nu\lambda}.
\end{equation}

In order to simplify the expressions let us recall the values of the coefficients of the unperturbed first fundamental form 
\begin{eqnarray}
E_0&=&\rho^2 \nonumber \\
F_0&=&0  \nonumber \\
G_0&=&\rho^2 \sin^2 \theta ^1, \nonumber 
\end{eqnarray}
those of the unperturbed second fundamental form (following the classical notation of the theory of surfaces, we denote with $l_0,m_0,n_0$ the quantities $\mathring{h}_{0_{11}},\mathring{h}_{0_{12}},\mathring{h}_{0_{22}}$) 
\begin{eqnarray}
l_0&=&\rho \nonumber\\
m_0&=&0 \nonumber\\
n_0&=& \rho \sin ^2\theta ^1\nonumber
\end{eqnarray}
and the unperturbed mean curvature and Gaussian curvature 
$$H_0= \frac{2}{\rho}$$
$$D_0= \frac{1}{\rho^2}.$$

From formula (22) in the proof of Lemma 3.4 in \cite{Mon} and the above expressions of the unperturbed quantities we have immediately that

\begin{eqnarray}
\int_{S_p^\rho} \frac{H_\e^2}{4}d\Sigma_\e&=&4\pi-\frac{1}{2}\epsilon \int _{S^2} \Big[ h(\bar{\theta _1}, \bar{\theta _1})+h(\bar{\theta _2}, \bar{\theta _2}) \Big] d\Sigma_0 -\frac{1}{2} \epsilon \rho  \int _{S^2} \Big[ (\bar{\theta} _2 ^\mu \bar{\theta} _2 ^\nu +  \bar{\theta} _1 ^\mu \bar{\theta} _1 ^\nu ) \nu _0 ^\lambda  A_{\mu\nu\lambda} \Big] d\Sigma _0\nonumber\\
&&-\epsilon \int _{(0,\pi)\times (0,2\pi)}  \bigg[ h\bigg(\frac{\partial \nu _0 }{\partial \theta ^2},\bar{\theta _2}\bigg)+\bigg(\frac{\partial N }{\partial \theta^2}, \bar{\theta _2}\bigg)\bigg] d\theta^1 d\theta ^2  -\epsilon \int _{S^2} \bigg[ h\bigg(\frac{\partial \nu _0 }{\partial \theta ^1},\bar{\theta _1}\bigg) +\bigg(\frac{\partial N }{\partial \theta^1}, \bar{\theta _1}\bigg) \bigg] d\Sigma _0+o(\e). \label{intH24}
\end{eqnarray} 
Now we have to compute $\int_{S_p^\rho} D_{\e} d\Sigma_{\e}$. Knowing the first and the second fundamental forms we can evaluate $D_{\e}:=\frac{\det{\mathring{h}_\e}}{\det{\mathring{g}_\e}}$, in fact observing that 
\begin{eqnarray}
\det{\mathring{h}_\epsilon}&=&\det{\mathring{h}_0}-\e n_0 \bigg[ h\bigg(\frac{\partial \nu _0 }{\partial \theta ^1}, \theta _1\bigg)
+\bigg(\frac{\partial N }{\partial \theta^1}, \theta _1\bigg) \bigg] -\frac{1}{2}\epsilon n_0 \theta _1 ^\mu \theta _1 ^\nu \nu _0 ^\lambda  A_{\mu\nu\lambda}+ \nonumber \\
&&-\e l_0 \bigg[ h\bigg(\frac{\partial \nu _0 }{\partial \theta ^2}, \theta _2\bigg)
+\bigg(\frac{\partial N }{\partial \theta^2}, \theta _2\bigg) \bigg] -\frac{1}{2}\epsilon l_0 \theta _2 ^\mu \theta _2 ^\nu \nu _0 ^\lambda  A_{\mu\nu\lambda}+o(\e)
\end{eqnarray}
and that
$$\det{\mathring{g}_\e}=\det{\mathring{g}_0}+\e E_0 h(\theta_2,\theta_2)+\e G_0h(\theta_1,\theta_1),$$
using the Taylor expansion $\frac{1}{a+\epsilon b+o(\epsilon)}=\frac{1}{a}-\epsilon \frac{b}{a^2}+o(\epsilon)$, we get
\begin{eqnarray}
D_\epsilon &=& D_0-\e \frac{ n_0 \bigg[ h\bigg(\frac{\partial \nu _0 }{\partial \theta ^1}, \theta _1\bigg)
+\bigg(\frac{\partial N }{\partial \theta^1}, \theta _1\bigg) \bigg] +\frac{1}{2} n_0 \theta _1 ^\mu \theta _1 ^\nu \nu _0 ^\lambda  A_{\mu\nu\lambda}+l_0 \bigg[ h\bigg(\frac{\partial \nu _0 }{\partial \theta ^2}, \theta _2\bigg)
+\bigg(\frac{\partial N }{\partial \theta^2}, \theta _2\bigg) \bigg] +\frac{1}{2} l_0 \theta _2 ^\mu \theta _2 ^\nu \nu _0 ^\lambda  A_{\mu\nu\lambda}}{E_0 G_0} \nonumber \\
&&-\e \frac{\big[E_0 h(\theta _2, \theta _2)+G_0 h(\theta _1, \theta _1)\big]\det{\mathring{h}_0}}{(E_0 G_0)^2}+o(\e). \nonumber 
\end{eqnarray}
Recalling \eqref{svilFormaArea} we obtain

\begin{eqnarray}
\int_{S_p^\rho} D_\e d\Sigma _\e &=&\int_{S_p^\rho} D_0 d\Sigma_0 + \frac{\epsilon}{2} \int_{(0,\pi)\times(0,2\pi)} D_0 \bigg\{ \frac{E_0 h(\theta _2, \theta _2)+G_0 h(\theta _1, \theta _1)}{\sqrt{E_0 G_0}} \bigg\} d\theta^1 d\theta ^2 \nonumber\\
&&- \e  \int_{(0,\pi)\times(0,2\pi)} \Bigg\{ \frac{ n_0 \bigg[ h\bigg(\frac{\partial \nu _0 }{\partial \theta ^1}, \theta _1\bigg)
+\bigg(\frac{\partial N }{\partial \theta^1}, \theta _1\bigg) \bigg] +\frac{1}{2} n_0 \theta _1 ^\mu \theta _1 ^\nu \nu _0 ^\lambda  A_{\mu\nu\lambda}+l_0 \bigg[ h\bigg(\frac{\partial \nu _0 }{\partial \theta ^2}, \theta _2\bigg)
+\bigg(\frac{\partial N }{\partial \theta^2}, \theta _2\bigg) \bigg] }{\sqrt{E_0 G_0}} \Bigg\} d\theta^1 d\theta^2 \nonumber \\
&&-\e \int_{(0,\pi)\times(0,2\pi)} \Bigg\{ \frac{\frac{1}{2} l_0 \theta _2 ^\mu \theta _2 ^\nu \nu _0 ^\lambda  A_{\mu\nu\lambda}}{\sqrt{E_0 G_0}} + \frac{\big[E_0 h(\theta _2, \theta _2)+G_0 h(\theta _1, \theta _1)\big]\det{\mathring{h}_0}}{(E_0 G_0)^{3/2} }\Bigg\} d\theta^1 d\theta^2+o(\epsilon).\nonumber\\ \label{intDe}
\end{eqnarray}

Plugging the unperturbed quantities into \eqref{intDe}, after some easy computations we get
\begin{eqnarray}
\int_{S_p^\rho}D_\e d\Sigma_\e &=& 4\pi-\frac{1}{2} \e\int_{S^2} \big[ h(\bar{\theta}_2, \bar{\theta}_2)+ h(\bar{\theta}_1, \bar{\theta}_1)\big] d\Sigma_0 - \e \int_{(0,\pi)\times(0,2\pi)} \bigg[ h\bigg(\frac{\partial \nu _0 }{\partial \theta ^2}, \bar{\theta}_2\bigg)
+ \bigg(\frac{\partial N }{\partial \theta^2}, \bar{\theta}_2\bigg) \bigg] d\theta^1 d\theta^2 \nonumber \\
&&-\e\int_{S^2} \Bigg\{ \bigg[ h\bigg(\frac{\partial \nu _0 }{\partial \theta ^1}, \bar{\theta}_1\bigg)
+\bigg(\frac{\partial N }{\partial \theta^1},\bar{\theta}_1 \bigg) \bigg] +\frac{\rho}{2}[ \bar{\theta}_1^\mu \bar{\theta}_1^\nu \nu _0 ^\lambda  A_{\mu\nu\lambda}+ \bar{\theta}_2 ^\mu \bar{\theta}_2 ^\nu \nu _0 ^\lambda  A_{\mu\nu\lambda}] \Bigg\}d\Sigma_0 +o(\e).\label{int:De}
\end{eqnarray}
Comparing the integrals \eqref{intH24} and \eqref{int:De} we see that all terms cancel out and we can conclude that
$$\int_{S_p^\rho}\Big[ \frac{H_\e^2}{4}-D_\e\Big] d\Sigma_\e=o(\e).$$
\end{proof}

In the following Lemma we find the expansion of the reduced functional $\Phi_\e$ in terms of $I_0, G_1, G_2$ and their derivatives. Recall the notation introduced in Remark \ref{rem:I0''} about $I_0'$ and $I_0''$ and the definition of $R_2$ given in the Subsection \ref{subsec:FDR} after Lemma \ref{lemv}.

\begin{lem}\label{lemsvile2}
For $\rh>R_2$ the reduced functional has the following expression:
$$ \Phi_\e= \e^2 \bigg(G_2(S_p^\rh)-\frac{1}{2}\int_{S^2}\bigg[G_1'(S_p^\rh) \big(I_0''(S_p^\rh)^{-1} \big[G_1'(S_p^\rh)  \big]\big)  \bigg]d\Sigma_0 \bigg)+o(\e^2). $$
\end{lem}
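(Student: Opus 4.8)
The plan is to Taylor-expand $I_\e(S_p^\rh(w))$ in the perturbation $w$ about $w=0$, evaluate at $w=w_\e(p,\rh)$, and exploit both the quantitative estimate $\|w_\e\|_{C^{4,\a}}=O(\e)$ from Lemma \ref{lemw} and the auxiliary equation $PI_\e'(S_p^\rh(w_\e))=0$ that defines $w_\e$; this is the concrete realization of the abstract formula stated after Theorem \ref{Thm:FDR}. First I would use \eqref{eq:IGS} together with the fact that $S_p^\rh$ is critical for $I_0$, i.e.\ $I_0'(S_p^\rh)=0$, and with $I_0(S_p^\rh)=G_1(S_p^\rh)=0$ from Lemma \ref{lem:G1}, to write
$$\Phi_\e=I_\e(S_p^\rh(w_\e))=\e^2 G_2(S_p^\rh)+\e\int_{S^2}G_1'(S_p^\rh)\,w_\e\,d\Sigma_0+\frac{1}{2}\int_{S^2}\big(I_0''(S_p^\rh)[w_\e]\big)w_\e\,d\Sigma_0+o(\e^2).$$
Every contribution of the simultaneous expansion in $\e$ and in $w$ that is not displayed is genuinely $o(\e^2)$ once $w_\e=O(\e)$ is inserted: the first-variation piece $O(\e^2)\|w_\e\|$ coming from $I_\e'(S_p^\rh)=\e G_1'(S_p^\rh)+O(\e^2)$, the mixed second-variation term $\e\,G_1''[w_\e,w_\e]=O(\e^3)$, and the cubic Taylor remainder $o(\|w_\e\|^2)=o(\e^2)$.

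Second, I would determine $w_\e$ to leading order from the auxiliary equation. Expanding $PI_\e'(S_p^\rh(w_\e))=0$ in the same two parameters and using that $PI_0''(S_p^\rh)=I_0''(S_p^\rh)$ on $C^{4,\a}(S^2)^\perp$ (because $\tfrac12\triangle_{S^2}(\triangle_{S^2}+2)$ already maps into $Ker[\triangle_{S^2}(\triangle_{S^2}+2)]^\perp$), the equation collapses to $I_0''(S_p^\rh)[w_\e]=-\e\,PG_1'(S_p^\rh)+O(\e^2)$. Since $I_0''(S_p^\rh)$ is invertible on $C^{4,\a}(S^2)^\perp$ this gives
$$w_\e=-\e\,I_0''(S_p^\rh)^{-1}PG_1'(S_p^\rh)+O(\e^2)=:-\e\,\xi+O(\e^2),\qquad \xi\in C^{4,\a}(S^2)^\perp.$$

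Third I would substitute $w_\e=-\e\xi+O(\e^2)$ into the displayed expansion. The linear term yields $-\e^2\int_{S^2}G_1'(S_p^\rh)\,\xi\,d\Sigma_0+o(\e^2)$ and the quadratic term yields $\tfrac{\e^2}{2}\int_{S^2}\big(I_0''(S_p^\rh)[\xi]\big)\xi\,d\Sigma_0+o(\e^2)$. Using $I_0''(S_p^\rh)[\xi]=PG_1'(S_p^\rh)$, the self-adjointness of $P$, and $P\xi=\xi$ (as $\xi\in Ker[\triangle_{S^2}(\triangle_{S^2}+2)]^\perp$), the quadratic term equals $\tfrac{\e^2}{2}\int_{S^2}G_1'(S_p^\rh)\,\xi\,d\Sigma_0$. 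The two combine to $-\tfrac{\e^2}{2}\int_{S^2}G_1'(S_p^\rh)\,\xi\,d\Sigma_0$, and since the projection on the outer factor is absorbed by pairing with $\xi\in Ker^\perp$, this is exactly $-\tfrac{\e^2}{2}\int_{S^2}G_1'(S_p^\rh)\big(I_0''(S_p^\rh)^{-1}[G_1'(S_p^\rh)]\big)d\Sigma_0$, which gives the claimed formula.

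The main obstacle is the careful bookkeeping of the remainders: one must verify that every term produced by the joint expansion in $\e$ and in $w$ — in particular those mixing the $O(\e)$ size of $w_\e$ with the $\e$- and $\e^2$-order pieces of $I_\e$ — is absorbed into $o(\e^2)$, which relies crucially on the estimate $\|w_\e\|_{C^{4,\a}}=O(\e)$ of Lemma \ref{lemw} and on the uniformity of the $C^2$ Taylor remainder for $I_\e$. The only genuinely algebraic point is the coefficient $-\tfrac12$, which emerges from the cancellation between the linear term (coefficient $-1$) and the quadratic term (coefficient $+\tfrac12$) once one identifies $I_0''(S_p^\rh)[\xi]=PG_1'(S_p^\rh)$.
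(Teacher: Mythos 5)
Your argument is correct and follows essentially the same route as the paper: Taylor expansion of $I_\e$ in $\e$ and $w$, solving the auxiliary equation to leading order to obtain $w_\e=-\e\, I_0''(S_p^\rh)^{-1}[PG_1'(S_p^\rh)]+O(\e^2)$, and substituting back so that the linear term (coefficient $-1$) and the quadratic term (coefficient $+\tfrac12$) combine into the $-\tfrac12$ of the statement. The one detail the paper makes explicit and you only partially address is that $PG_1'(S_p^\rh)=G_1'(S_p^\rh)$ outright, because $G_1\equiv 0$ on the whole critical manifold forces $G_1'(S_p^\rh)\in Ker[\triangle_{S^2}(\triangle_{S^2}+2)]^\perp$; this is what lets one drop the projection \emph{inside} $I_0''(S_p^\rh)^{-1}[\,\cdot\,]$ and not merely in the outer pairing.
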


\begin{proof}
With a Taylor expansion in $\e$, $w$ and recalling that $\|w\|_{C^{4,\a}}=O(\e)$ (see Lemma \ref{lemw}), we have
\begin{eqnarray}
I_\e'(S_p^\rh(w))&=&I_0'(S_p^\rh(w))+\e G_1'(S_p^\rh(w))+o(\e)\nonumber\\
&=&I_0'(S_p^\rh)+I_0''(S_p^\rh)[w]+\e G_1'(S_p^\rh)+o(\e). \nonumber
\end{eqnarray}
Since $I_0'(S_p^\rh)=0$ and $w$ satisfies the auxiliary equation $PI_\e'(S_p^\rh(w))=0$, we must have 
$$w=-\e I_0''(S_p^\rh)^{-1} [PG_1'(S_p^\rh)].$$
Observe that from $G_1(S_p^\rh)\equiv 0$ $\forall p,\rho$ it follows that $G_1'(S_p^\rh)\in Ker[\triangle_{S^2}(\triangle_{S^2}+2)]^\perp$, so $PG_1'(S_p^\rh)=G_1'(S_p^\rh)$.
\\Hence, recalling that $I_0(S_p^\rh)=0, I_0'(S_p^\rh)=0, G_1(S_p^\rh)=0$ we have 
\begin{eqnarray}
I_\e(S_p^\rh(w))&=& I_0(S_p^\rh(w))+\e G_1(S_p^\rh(w))+\e^2 G_2(S_p^\rh(w))+o(\e^2)\nonumber\\
&=& \frac{1}{2}\int_{S^2}\Big[I_0''(S_p^\rh)[w] \quad w\Big]d\Sigma_0 +\e \int_{S^2} \big[ G_1'(S_p^\rh) \quad w \big]d\Sigma_0+ \e^2 G_2(S_p^\rh)+o(\e^2) \nonumber \\
&=& -\frac{1}{2} \e^2 \int_{S^2}\Big[ G_1'(S_p^\rh) \quad I_0''(S_p^\rh)^{-1} [G_1'(S_p^\rh)]\Big] d\Sigma_0+ \e^2 G_2(S_p^\rh)+o(\e^2). \nonumber
\end{eqnarray}

\end{proof}

Now we want to estimate the quantities  $G_1'(S_p^\rh)$ and $G_2(S_p^\rh)$ appearing in the expression of the reduced functional.
\begin{lem}\label{lem:G1'G2}
Writing the conformal Willmore functional on perturbed standard spheres as in \eqref{eq:IGS}, we get the following expressions for the differential of $G_1$ and for $G_2$ evaluated on $S_p^\rh$:
$$G_1'(S_p^\rho)= L(h)+ (1+\rh) \big[ L(Dh)+L(D^2h)+L(D^3h) \big]$$
$$G_2(S_p^\rho)=\int_{S_p^\rho} \Big[  \frac{1}{\rho^2} L(h)L(Dh)+ \frac{1}{\rh} L(h) L(Dh) +\frac{1}{\rho^2}(Q(h)+Q(Dh))+ \frac{1}{\rho} Q(Dh) + Q(Dh)  \Big]$$
where $L(.)$ and $Q(.)$ denote a generic linear (respectively quadratic)  function in the entries of the matrix argument with smooth coefficients on $S^2$ which can change from formula to formula and also in the same formula.
\end{lem}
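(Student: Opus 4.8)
The plan is to extract both quantities from the $\epsilon$-expansion machinery already set up in the proof of Lemma \ref{lem:G1}, pushed one order further, and to classify the resulting terms rather than compute them exactly. Note first the two reductions that make the statement tractable. Differentiating the expansion \eqref{eq:IGS} in the normal perturbation $w$ at $w=0$ and using $I_0'(S_p^\rh)=0$ gives $I_\e'(S_p^\rh)=\e\,G_1'(S_p^\rh)+o(\e)$, so $G_1'(S_p^\rh)$ is simply the $\e$-linear coefficient of the \emph{first variation} $I_\e'(S_p^\rh)$, for which we have the closed formula \eqref{eq:diffI}. Likewise $I_0(S_p^\rh)=0$ and $G_1(S_p^\rh)=0$ (Lemma \ref{lem:G1}), so $I_\e(S_p^\rh)=\e^2 G_2(S_p^\rh)+o(\e^2)$ and $G_2(S_p^\rh)$ is nothing but the $\e^2$-coefficient of the number $I_\e(S_p^\rh)$. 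In both cases the lemma asserts only the \emph{structural form} of the answer --- which linear or quadratic combinations of $h$ and its derivatives occur, and with which powers of $\rh$ --- so no cancellation or exact coefficient has to be produced; it suffices to check that every term that arises falls into one of the admissible classes.

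For $G_1'$ I would expand \eqref{eq:diffI} to first order in $\e$ on the round sphere $S_p^\rh$. The block $\tfrac12\triangle_{\mathring M}H+H(\tfrac{H^2}{4}-D)$ depends on $\e$ only through $g_\e$; inserting $g^{\mu\nu}=\delta^{\mu\nu}-\e h_{\mu\nu}+o(\e)$, the normal expansion \eqref{eqN} (in which $N$ is algebraic in $h$, with no derivatives), and the Christoffel symbols $\Gamma^\nu_{\mu\lambda}=\tfrac{\e}{2}\delta^{\nu\sigma}A_{\mu\sigma\lambda}+o(\e)$ with $A$ as in \eqref{defA}, one gets $H_\e=\tfrac2\rh+\e H_1+o(\e)$ with $H_1$ built from $h$ and $Dh$. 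Since $H_0$ is constant, the $\e$-linear part of $\triangle_{\mathring M}H$ reduces to a surface Laplacian of $H_1$, which brings two further tangential derivatives and thus terms up to $L(D^3h)$. The remaining summands of \eqref{eq:diffI} are already $O(\e)$, hence evaluated on the flat round sphere; because the Riemann tensor of $g_\e$ is second order and $\nabla R$ third order in $h$, these too contribute only terms of type $L(h),\dots,L(D^3h)$. Collecting the contributions and recording the $\rh$-homogeneity of each yields the claimed $G_1'(S_p^\rho)=L(h)+(1+\rh)[L(Dh)+L(D^2h)+L(D^3h)]$.

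For $G_2$ I would carry the expansions of Lemma \ref{lem:G1} to order $\e^2$: the inverse metric $g^{\mu\nu}=\delta^{\mu\nu}-\e h_{\mu\nu}+\e^2(h^2)_{\mu\nu}+o(\e^2)$; the inward normal $\nu_\e=\nu_0+\e N+\e^2 N_2+o(\e^2)$, where $N$ is given by \eqref{eqN} and $N_2$ is fixed by imposing $g_\e(\theta_i,\nu_\e)=0$ and $g_\e(\nu_\e,\nu_\e)=1$ to second order; the Christoffel symbols to order $\e^2$; and thence $\mathring h_{\e ij}$, $H_\e$, $D_\e=\det\mathring h_\e/\det\mathring g_\e$ and the area form $d\Sigma_\e$ (as in \eqref{svilFormaArea}) to order $\e^2$. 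Since the $\e^0$ and $\e^1$ coefficients of the integrand of $I_\e(S_p^\rh)$ integrate to $0$, the $\e^2$-coefficient is exactly the density of $G_2(S_p^\rh)$. Each term there is either a genuine quadratic $Q(h)$ or $Q(Dh)$ coming from a single second-order expansion, or a product $L(h)L(Dh)$ arising when two first-order factors multiply --- for instance in $H_\e^2$ and in the product $\det\mathring h_\e\cdot(\det\mathring g_\e)^{-1}$, each expanded to first order. Reading off the accompanying powers of $\rh$ from the unperturbed data ($E_0=\rh^2$, $l_0=\rh$, $D_0=\rh^{-2}$ and $\sqrt{E_0G_0}$) produces precisely the weights $\rh^{-2}$, $\rh^{-1}$, $\rh^0$ in the statement.

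The main obstacle is the order-$\e^2$ bookkeeping for $G_2$: determining $N_2$ and assembling $D_\e$ to second order, where numerator and denominator vary simultaneously, generates a large number of terms, and one must verify that each lands in one of the classes $Q(h)$, $Q(Dh)$, $L(h)L(Dh)$ with the correct $\rh$-weight. What keeps this manageable is exactly that only the structural classification is claimed: unlike in Lemma \ref{lem:G1}, where the $\e$-linear terms had to be shown to cancel identically, here I need track only the type and $\rh$-homogeneity of each term, so the delicate cancellations can be ignored.
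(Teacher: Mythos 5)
Your strategy for $G_2$ is essentially the paper's: expand every geometric quantity ($\mathring g$, $\nu_\e$, $\Gamma^\nu_{\mu\lambda}$, $\mathring h_\e$, $H_\e$, $D_\e$, $d\Sigma_\e$) to order $\e^2$ and classify each resulting term by type ($L(h)L(Dh)$, $Q(h)$, $Q(Dh)$) and $\rh$-weight, exploiting the fact that only the structural form is claimed so no cancellations need to be tracked. The paper does exactly this, working throughout in the schematic $a,L,Q$ notation (it never needs $N_2$ or the second-order Christoffel symbols explicitly, only the observation that they are of type $Q(h)$ resp.\ $L(h)L(Dh)$, so you could lighten your plan accordingly). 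For $G_1'$ you take a genuinely different route: the paper expands $G_1(S_p^\rho(w))$ to first order in $w$, obtaining an integrand of the form $\big(L(h)+(1+\rh)L(Dh)\big)\big(a+L(w)+L(Dw)+L(D^2w)\big)$, and integrates by parts to move up to two derivatives onto $h$; you instead read $G_1'$ off as the $\e$-linear coefficient of the first variation, using the closed Euler--Lagrange formula \eqref{eq:diffI}. That is legitimate and arguably cleaner, since \eqref{eq:diffI} already encodes the integrations by parts.

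One point you must fix for the $G_1'$ argument to give the stated $\rh$-weights: the $G_1'$ of this lemma (the one fed into Lemma \ref{lemsvile2} and Lemma \ref{lem:CR}) is the differential with respect to $w$, paired with $w$ against $d\Sigma_0$ on the \emph{unit} sphere, in the normalization of Remark \ref{rem:I0''} where $I_0''=\tfrac12\triangle_{S^2}(\triangle_{S^2}+2)$ with no $\rho^{-3}$ prefactor. The formula \eqref{eq:diffI}, by contrast, is the geometric $L^2(d\Sigma)$-gradient with respect to normal variations; since the perturbation of $S_p^\rho(w)$ is $\rho w\mathring N$ and $d\Sigma\sim\rho^2 d\Sigma_0$, the two differ by a factor $\rho^3$ (this is exactly the renormalization $\tilde I_0'=\rho^3 I_0'$ carried out in Remark \ref{rem:I0''}, and is visible in the $\tfrac{1}{2\rho^3}$ of Proposition \ref{lem:svilI'}). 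If you identify $G_1'$ with the $\e$-coefficient of \eqref{eq:diffI} without inserting this $\rho^3$, your $\rh$-homogeneities come out wrong by three powers, which would then corrupt the large-$\rho$ bound in Lemma \ref{lem:CR}. (The related issue that the radial perturbation $\rho w\Theta$ is not exactly $g_\e$-normal is harmless here: the discrepancy is $O(\e)$ and $I_\e'(S_p^\rho)$ is itself $O(\e)$, so it only affects the $\e^2$ order.) With that factor accounted for, your plan reproduces the claimed expressions.
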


\begin{proof}
To get the expression of the desired quantities we compute the expansion of $I_\e (S_p^\rh)$ at second order in $\e$ and first order in $w$. In the intention of simplifying the notation, we will omit the remainder terms in the expansions. During the proof we use $L(.)$ and $Q(.)$ to denote a generic linear (respectively quadratic) in the components real, vector or matrix-valued function, with real, vector or matrix argument and with smooth coefficients on $S^2$. The letter $a$ will denote a smooth real, vector or matrix-valued function on $S^2$. $L,Q$ and $a$ can change from formula to formula and also in the same formula. 

Let us start with the expansion. Observe that $S_p^\rh$ is parametrized by $p+\rh (1-w) \Theta$ so the tangent vectors are 
$$Z_i= \rh (1-w) \Theta_i- \rh w_i \Theta=\rh (a+L(w)+L(Dw)).$$
The first fundamental form on $S_p^\rh$ is 
$$\mathring{g}_{ij}=g_\e(Z_i,Z_j)=(Z_i,Z_j)+\e h(Z_i,Z_j)= \rh^2 \Big[ a+ L(w)+L(Dw) +\e L(h)(a+ L(w)+L(Dw)) \Big]$$
and
$$\det{\mathring{g}}=\rh^4 \Big[ a+ L(w)+L(Dw) +\e L(h)(a+ L(w)+L(Dw)) +\e^2 Q(h)\Big]$$
$$\sqrt{\det{\mathring{g}}}=\rh^2 \Big[ a+ L(w)+L(Dw) +\e L(h)(a+ L(w)+L(Dw)) +\e^2 Q(h)\Big];$$
it's easy to see that the inverse of metric is
$$\mathring{g}^{ij}=\frac{1}{\rh^2} \Big[ a+ L(w)+L(Dw) +\e L(h)(a+ L(w)+L(Dw)) +\e^2 Q(h)\Big].$$
The normal versor $\nu_\e$ has to satisfy the three following equations:
$$0=g_\e(\nu_\e, Z_i)=(\nu_\e,Z_i)+\e h(\nu_\e,Z_i)=\nu_\e (1+\e L(h)) \Big( a+L(w)+L(Dw)\Big)$$
$$1=g_\e(\nu_\e , \nu_\e).$$
Hence, just solving the linear system given by the first two conditions and plugging in the third one, we realize that
$$\nu_\e=a+L(w)+L(Dw)+ \e L(h) (a+L(w)+L(Dw)) +\e^2 Q(h).$$
In order to compute the second fundamental form $\mathring{h}_\e=-g_\e(\nabla_{Z_i}\nu_\e, Z_j)$ recall that
$$\nabla_{Z_i}\nu_\e=\frac{\partial \nu_\e}{\partial \theta^i}+ Z_i^\mu \nu_\e^\lambda \Gamma_{\mu \lambda}^{\nu} \frac{\partial}{\partial x^\nu}$$
and that
$$\Gamma _{\mu \lambda} ^\nu = \frac{1}{2}\epsilon \delta^{\nu\sigma } [D_\mu h_{\lambda \sigma}+D_\lambda h_{\sigma\mu}-D_\sigma h_{\mu \lambda}]=\e L(Dh),$$
so the covariant derivative of $\nu_\e$ can be written as
\begin{eqnarray}
\nabla_{Z_i}\nu_\e&=&a+L(w)+L(Dw)+L(D^2 w)+ \e L(Dh) (a+L(w)+L(Dw))+\e L(h)(a+L(w)+L(Dw)+L(D^2w)) \nonumber\\
&&+ \e \rh L(Dh)\big(a+L(w)+L(Dw)\big)+\e^2(1+\rh) L(h)L(Dh)  \nonumber 
\end{eqnarray}
and the second fundamental form becomes
\begin{eqnarray}
\mathring{h}_\e&=&\rh \Big[a+L(w)+L(Dw)+L(D^2 w)+ \e L(Dh) (a+L(w)+L(Dw))+\e L(h)(a+L(w)+L(Dw)+L(D^2w))\Big] \nonumber\\
&&+ \e \rh^2 L(Dh)\big(a+L(w)+L(Dw)\big)+\e^2\rh(1+\rh) L(h)L(Dh)+\e^2 \rh Q(h).  \nonumber 
\end{eqnarray}
Using the previous formulas now we are in position to estimate $H$, $H^2$ and $D$. With some easy computations one gets

\begin{eqnarray}
H&=&\frac{1}{\rh} \Big[a+L(w)+L(Dw)+L(D^2 w)+ \e L(Dh) (a+L(w)+L(Dw))+\e L(h)(a+L(w)+L(Dw)+L(D^2w))\Big] \nonumber\\
&&+ \e L(Dh)\big(a+L(w)+L(Dw)\big)+\e^2\frac{1}{\rh}(1+\rh) L(h)L(Dh)+\e^2 \frac{1}{\rh} Q(h).    \nonumber \\
H^2&=& \frac{1}{\rh ^2} \Big[a+L(w)+L(Dw)+L(D^2 w)+ \e (L(h)+L(Dh)+\rh L(Dh)) (a+L(w)+L(Dw)+L(D^2w))\Big] \nonumber\\
&&+\e^2\frac{1}{\rh^2}(1+\rh) L(h)L(Dh)+ \frac{\e^2}{\rh^2} (Q(h)+Q(Dh))+\frac{\e^2}{\rh}L(Dh)(L(h)+L(Dh))+\e^2Q(Dh) \nonumber\\
\det{\mathring{h}}&=& \rh ^2 \Big[a+L(w)+L(Dw)+L(D^2 w)+ \e (L(h)+L(Dh)+\rh L(Dh)) (a+L(w)+L(Dw)+L(D^2w))\Big] \nonumber\\
&&+\e^2\rh^2(1+\rh) L(h)L(Dh)+\e^2 \rh^2 (Q(h)+Q(Dh))+\e^2 \rho^3(1+\rho) Q(Dh)  \nonumber\\
D&=&\frac{\det{\mathring{h}}}{\det{\mathring{g}}}=\frac{1}{\rh ^2} \Big[a+L(w)+L(Dw)+L(D^2 w)+ \e (L(h)+L(Dh)+\rh L(Dh)) (a+L(w)+L(Dw)+L(D^2w))\Big]\nonumber\\
&& +\frac{\e^2}{\rh^2}(1+\rh) L(h)L(Dh)+\frac{\e^2} {\rh^2} (Q(h)+Q(Dh))+\e^2 \frac{1}{\rho}(1+\rho)  Q(Dh) \nonumber
\end{eqnarray}
Now we can compute $I_\e(S_p^\rho(w))=I_0(S_p^\rho(w))+\e G_1(S_p^\rho(w))+\e^2 G_2(S_p^\rho(w))$ at the second order in $\e$ and first order in $w$:
\begin{eqnarray}
I_\e(S_p^\rho))&=& \int_{S_p^\rho} \bigg[ \frac{H^2}{4}-D \bigg] d\Sigma_0=\int_{S^2}  \big[a+L(w)+L(Dw)+L(D^2 w)\big] d\Sigma_0\nonumber\\
&&+\e \int_{S^2} \Big[\big(L(h)+L(Dh)+ \rh  L(Dh)\big) \big(a+L(w)+L(Dw)+L(D^2w)\big) \Big] d\Sigma_0 \nonumber\\
&&+\e^2 \int_{S^2} \Big[  (1+\rh) L(h)L(Dh)+ \rh L(Dh)(L(h)+L(Dh))+Q(h)+Q(Dh)+\rho Q(Dh)+\rho^2  Q(Dh)  \Big] d\Sigma_0. \nonumber
\end{eqnarray}
So $G_1(S_p^\rho(w))=\int_{S^2} [(L(h)+L(Dh)+ \rh  L(Dh)\big) \big(a+L(w)+L(Dw)+L(D^2w))]$, but also
$$G_1(S_p^\rho(w))= G_1(S_p^\rho) + \int_{S^2}G_1'(S_p^\rho)w  d\Sigma_0;$$ 
with an integration by parts we get the first variation
$$\int_{S^2}G_1'(S_p^\rho)w= \int_{S^2} \Big[\Big(L(h)+ (1+\rh) \big(L(Dh)+L(D^2h)+L(D^3h) \big)\Big) w \Big] d\Sigma_0,$$
then the differential of $G_1$ at $S_p^\rho$ is
$$G_1'(S_p^\rho)= L(h)+ (1+\rh) \big[ L(Dh)+L(D^2h)+L(D^3h) \big].$$
Finally observe that 
$$G_2(S_p^\rho)=\int_{S_p^\rho} \Big[  \frac{1+\rh}{\rho^2} L(h)L(Dh)+ \frac{1}{\rh} L(Dh)(L(h)+L(Dh))+\frac{1}{\rho^2}(Q(h)+Q(Dh))+ \frac{1}{\rho} Q(Dh) + Q(Dh)  \Big] d\Sigma_0 $$
\end{proof}

\subsection{Proof of the existence Theorems}\label{subsec:ProofEx}

In order to get existence of critical points we study the reduced functional $\Phi_\e:\Rtre\oplus \R^+ \to \R$. Since for small radius $\rho$, the reduced functional coincides with the conformal Willmore functional evaluated on the perturbed geodesic spheres $S_{p,\rho}^\e(w_\e(p,\rh))$ obtained in Lemma \ref{lemEqAuxContr}, then we know the expansion of $\Phi_\e$ for small radius from Proposition \ref{svilPhipr}. Now, using the expression of the reduced functional for large radius given in Lemma \ref{lemsvile2} and the estimates of Lemma \ref{lem:G1'G2}, we are able to bound $\Phi_\e(p,\rh)$ for large radius. This is done in the following Lemma:

\begin{lem}\label{lem:CR}
Let $h_{\mu\nu}\in C^{\infty}_0(\Rtre)$ a symmetric bilinear form with compact support (it is enough that $h$ and its first derivatives decrease fast at infinity) and let $c\in \R$ such that 
$$c:=\sup\{\|h_{\mu\nu}\|_{H^1(\pi)}:\pi \text{ is an affine plane in $\Rtre$,  } \mu,\nu=1,2,3\}.$$
Then there exists a constant $C_c>0$  depending on $c$ and  $R_3>0$ such that for all $\rh>R_3$ 
$$|\Phi_\e(p,\rh)|<\e^2C_c. $$
Moreover one has that $\forall \eta>0$ there exist $\delta>0$ small enough and $R_4 \geq 0$ large enough such that for $c<\delta $ and $\rh>R_4$
$$|\Phi_\e(p,\rh)| <\eta \e^2.$$
\end{lem}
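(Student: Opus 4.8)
The plan is to feed the degenerate expansion of Lemma \ref{lemsvile2} into a pair of integral estimates controlled by $c$. For $\rho>R_2$ that Lemma gives
$$\Phi_\e(p,\rho)=\e^2\Big(G_2(S_p^\rho)-\tfrac12\int_{S^2}\big[G_1'(S_p^\rho)\,\big(I_0''(S_p^\rho)^{-1}[G_1'(S_p^\rho)]\big)\big]\,d\Sigma_0\Big)+o(\e^2),$$
so it suffices to bound the $\e^2$--coefficient $K(p,\rho):=G_2(S_p^\rho)-\tfrac12\big(G_1'\,|\,I_0''^{-1}G_1'\big)$ by a constant multiple of $c^2$, uniformly in $p$ and in $\rho$ large; the remainder $o(\e^2)$ is uniform on the (large but fixed) compact $Z_c$ of Remark \ref{rem:PhiR}, hence is absorbed by taking $\e$ small afterwards. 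The first ingredient I would record is the geometric reason that $c$ enters at all: since $h$ has compact support, for $\rho$ large the part of $S_p^\rho$ meeting $\mathrm{supp}(h)$ is a graph over its tangent plane with second fundamental form $O(1/\rho)$, hence converges in $C^1$ to an affine plane $\pi$. Consequently, uniformly in $p$ and for $\rho>R_3$,
$$\int_{S_p^\rho}|h|^2\,d\Sigma\le 2\sup_\pi\int_\pi|h|^2\le 2c^2,\qquad \int_{S_p^\rho}|Dh|^2\,d\Sigma\le 2c^2,$$
and the pull--back $\tilde h(\Theta):=h(p+\rho\Theta)$ obeys $\|\tilde h\|_{H^1(S^2)}\le Cc$ (the factor $\rho$ produced by a tangential derivative on $S^2$ compensates the $\rho^{-2}$ from the change of variables).

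Bounding $G_2$ is then routine: by Lemma \ref{lem:G1'G2} the integrand of $G_2(S_p^\rho)$ is built only from $h$ and $Dh$, and the single term without a negative power of $\rho$ is $\int_{S_p^\rho}Q(Dh)\,d\Sigma$. The planar estimate gives $|\int_{S_p^\rho}Q(Dh)|\le Cc^2$, while every other term carries a factor $\rho^{-1}$ or $\rho^{-2}$ and is therefore negligible for $\rho$ large; hence $|G_2(S_p^\rho)|\le C_1c^2$.

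The main obstacle is the quadratic term, because Lemma \ref{lem:G1'G2} gives $G_1'(S_p^\rho)=L(h)+(1+\rho)[L(Dh)+L(D^2h)+L(D^3h)]$, which involves \emph{second and third} derivatives of $h$, whereas $c$ controls only $h$ and $Dh$. The reconciliation rests on the fact that $I_0''(S_p^\rho)^{-1}=2\big(\triangle_{S^2}(\triangle_{S^2}+2)\big)^{-1}$ is a positive self--adjoint smoothing operator of order $-4$ on $Ker[\triangle_{S^2}(\triangle_{S^2}+2)]^\perp$ (its eigenvalues $2/[\,l(l+1)(l(l+1)-2)\,]$ decay like $l^{-4}$), so that
$$0\le \big(G_1'\,|\,I_0''^{-1}G_1'\big)\le C\,\|G_1'\|_{H^{-2}(S^2)}^2.$$
I would then prove $\|G_1'\|_{H^{-2}(S^2)}\le C_2c$ by integrating by parts on $S^2$, transferring the derivatives in $G_1'$ onto the order $-4$ operator. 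The point is that the terms $(1+\rho)L(D^2h)$ and $(1+\rho)L(D^3h)$ are, on $S_p^\rho$, \emph{tangential} derivatives of the first--order quantities $h$ and $Dh$: the only way pure normal second derivatives of $h$ could enter \eqref{eq:diffI} is through $\tfrac{\lambda_1-\lambda_2}{2}[R(\mathring N,e_1,\mathring N,e_1)-R(\mathring N,e_2,\mathring N,e_2)]$, whose contribution degenerates on the \emph{umbilic} round sphere where $\lambda_1=\lambda_2$. Each tangential--derivative term thus contributes $\le C(1+\rho)\rho^{-k}\|\tilde h\|_{H^1(S^2)}$ with $k\ge 2$, hence is negligible, while the genuine first--order piece $(1+\rho)L(Dh)$ contributes $\le C(1+\rho)\rho^{-1}\|Dh\|_{L^2(S_p^\rho)}\le Cc$; altogether $\|G_1'\|_{H^{-2}}\le C_2c$ and $\big(G_1'\,|\,I_0''^{-1}G_1'\big)\le C_3c^2$.

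Combining the two estimates yields $|K(p,\rho)|\le Cc^2$ uniformly for $\rho>R_3$ and all $p$, so $|\Phi_\e|\le \e^2Cc^2+o(\e^2)$. For the first assertion take $C_c:=Cc^2+1$, so that $|\Phi_\e|<\e^2C_c$ for $\e$ small. For the second, given $\eta>0$ choose $R_4\ge R_3$ (so the planar approximation is in force) and $\delta>0$ with $C\delta^2<\eta/2$; then $c<\delta$ and $\rho>R_4$ force $|K|<\eta/2$, whence $|\Phi_\e|<\eta\e^2$ for $\e$ small. The delicate step is the control of $(G_1'\,|\,I_0''^{-1}G_1')$: reconciling the third--order derivatives of $h$ in $G_1'$ with the purely first--order quantity $c$, which is made possible precisely by the order $-4$ smoothing of $I_0''^{-1}$ together with the degeneration of the normal--second--derivative term on the round sphere.
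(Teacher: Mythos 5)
Your proposal is correct and follows essentially the same route as the paper: expand $\Phi_\e$ via Lemma \ref{lemsvile2} and Lemma \ref{lem:G1'G2}, use the fact that for large $\rho$ the portion $S_p^\rho\cap \mathrm{supp}(h)$ is $C^1$-close to an affine plane to convert the integrals into the quantity $c^2$, and handle the second- and third-order derivatives of $h$ in $G_1'$ by integration by parts against the order $-4$ smoothing operator $I_0''(S_p^\rho)^{-1}$. The only difference is one of exposition: where the paper simply invokes ``standard elliptic regularity estimates and integration by parts,'' you spell out the $H^{-2}$ duality estimate $\big(G_1'\,|\,I_0''^{-1}G_1'\big)\le C\|G_1'\|_{H^{-2}(S^2)}^2\le Cc^2$, which is a faithful elaboration of the same step.
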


\begin{proof}
For simplicity the proof of the Lemma is done in the case $h \in C^\infty_0$. Using the notations established in Remark \ref{rem:I0''}, from Lemma \ref{lemsvile2} and Lemma \ref{lem:G1'G2} we can write the reduced functional as
\begin{eqnarray}
\Phi_\e(p,\rho)&=& \e^2 \bigg(G_2(S_p^\rh)-\frac{1}{2} \int_{S^2}\bigg[G_1'(S_p^\rh) ((I_0''(S_p^\rh))^{-1} \big[G_1'(S_p^\rh)  \big])  \bigg]\bigg)+o(\e^2)\nonumber \\
&=&\e^2\int_{S_p^\rho} \Big[  \frac{1}{\rho^2} L(h)L(Dh)+ \frac{1}{\rh} L(h) L(Dh) +\frac{1}{\rho^2}(Q(h)+Q(Dh))+ \frac{1}{\rho} Q(Dh) + Q(Dh)  \Big] d\Sigma_0\nonumber \\
&&+\e^2 \int_{S_p^\rh} \frac{1}{\rho^2}\bigg[L(h)+ (1+\rh) (L(Dh)+L(D^2h)+L(D^3h))\times \nonumber\\
&&\qquad \qquad \times  ( \triangle_{S^2}(\triangle_{S^2}+2))^{-1} \big[L(h)+ (1+\rh) (L(Dh)+L(D^2h)+L(D^3h)) \big]  \bigg] d\Sigma_0. \nonumber
\end{eqnarray}
Now denote $K=supp(h)$ which is a compact subset of $\Rtre$; of course in the formula above the domain of integration can be replaced with $S_p^\rh \cap K$. 

Observe that for all $\sigma>0$ there exists $R>0$ with the following property: 
\\for all standard spheres  $S_p^\rho$ with radius $\rho>R$ there exists an affine plane $\pi \subset \Rtre$ such that
\begin{equation}\label{eq:hd}
\|h\|_{H^1(S_p^\rh \cap K)} ^2<\|h\|_{H^1(\pi \cap K)}^2+\sigma.
\end{equation}  
This is simply because one can approximate (in $C^k$ norm for all $k \in \N$ ) the portion of standard sphere $S_p^\rho\cap K$ with a portion of an affine plane $\pi$ provided that the radius $\rh$ is large enough.

So the first integral can be bounded by a constant times $\|h\|_{H^1(\pi \cap K)}^2+\sigma$. Using the standard elliptic regularity estimates and integration by parts also the second integral can be bounded with a constant times $\|h\|_{H^1(\pi \cap K)}^2+\sigma$.
\\Hence for all $\sigma>0$ there exists $R>0$ and $\tilde{C}>0$ such that for all $(p,\rho)$ with $\rho>R$, there exists an affine plane $\pi$ such that
$$|\Phi _\e (p,\rho)|< \e ^2 \tilde{C} (\|h\|_{H^1(\pi \cap K)}^2+\sigma) .$$
Notice that $\tilde{C}$ depends on the structure of the functions $L(.)$ and $Q(.)$ but is uniform in $(p,\rho)$, $R$ and $\sigma$ as above. 
Recalling the definition of $c$ we get: 

For all $\sigma>0$ there exists $R>0$  such that for all $(p,\rho)$ with $\rho>R$,
$$|\Phi _\e (p,\rho)|<\e ^2 \tilde{C} (c^2 +\sigma) .$$
Clearly setting $\sigma=1$, $R_3=R$ and $C_c=\tilde{C} (c^2+1)$ one obtains the first part of the thesys.
For the second part we have to show that for all $\eta>0$ there exist $\delta>0$ and $R_4>0$ such that if $c<\delta$ then for all $(p,\rho)$ with $\rh>R_4$ one has $\Phi_\e(p,\rh)<\e^2 \eta$; but this is true setting above  $\delta^2=\sigma=\frac{\eta}{2\tilde{C}}$ and $R_4=R$ associated to $\sigma$ as before (observe that the estimate is uniform in $p$).
\end{proof}

Now we are in position to prove the main results of the paper.
\\\\
\begin{pfnb} {\bf of Theorem \ref{teoExist}}
In order to show the Theorem, by Lemma \ref{lemRFD}, it is enough to prove that $\Phi_\e$ has a critical point.
\\Observe that for $\rh<R_1$ 
$$\Phi_\e(p,\rh)=I_\e(S^\e _{p,\rh}(w_\e))=O(\rh^4), $$  
so $\Phi_\e$ can be extended to a $C^1$ function up to $\rho=0$ just putting $\Phi_\e(p,0)=0$  for all $p\in \Rtre$.
\\Let $R_3$ and $C_c$ be as in Lemma \ref{lem:CR}. Since $h$ has compact support, there exists a $R>0$ such that for $|p|\geq R$ and $\rh\leq R_3$, 
 $S_p^\rh \cap supp(h)=\emptyset$.
 
In order to apply the Finite Dimensional Reduction, we have to fix a compact $Z_c \subset \Rtre \oplus \R^+$. Let us choose it as
$$Z_c:=\{(p,\rh): |p|\leq R, 0\leq \rh \leq R_3 \}.$$
Apply Lemma \ref{lemw} to the compact $Z_c$ and observe that on the boundary $\partial Z_c$ we have:
\\- $\rh=0$: $\Phi_\e=0$.
\\- $|p|=R$: $\Phi_\e=0$. In fact for $|p|=R$ the standard sphere $S_p^\rh$ does not intersect the support of $h$, so $\Sigma_{p,\rh}^\e=S_p^\rh$ for all the radius $0\leq \rh \leq R_3$; since the solution of the auxiliary equation $PI_\e'(\Sigma_{p,\rh}^\e(w_\e))=0$ is unique for $w_\e$ small enough and since $S_p^\rh$ is already a critical point for $I_\e(=I_0$ since $S_p^\rh \cap supp(h)=\emptyset$) it follows that $\Sigma_{p,\rh}^\e(w_\e)=S_p^\rh$, hence
$$\Phi_\e(p,\rh)=I_\e(\Sigma_{p,\rh}^\e(w_\e))=I_\e(S_p^\rh)=I_0(S_p^\rh)=0.$$
-$\rh=R_3$: from Lemma \ref{lem:CR} we have that  $|\Phi_\e|<\e^2C_c$.

Now observe that $\Phi_\e=O(\e^2)$ uniformly on $Z_c$:
\\from the definition of reduced functional, with a Taylor expansion one gets
$$\Phi_\e(p,\rh)=I_\e(\Sigma_{p,\rh}^\e(w_\e))=I_\e'(\Sigma_{p,\rh}^\e)[w_\e]+O(\|w_\e \|^2),$$
but $\|w_\e\|_{C^{4,\a}(S^2)}=O(\e)$  and $\|v_\e\|_{C^{4,\a}(S^2)}=O(\e)$ uniformly for $(p,\rh)\in Z_c$, so
$$I_\e'(\Sigma_{p,\rh}^\e)=I_\e'(S_p^\rh(v_\e))=I_0''(S_p^\rh)[v_\e]+\e G_1'(S_p^\rh)+o(\e)=O(\e) $$
hence $\Phi_\e=O(\e^2)$ uniformly on $Z_c$.

At this moment we know that $\Phi_\e$ is of order $O(\e ^2)$ uniformly on $Z_c$ and we know its behaviour on the boundary $\partial Z_c$.
\\Now we are going to use the expansion for small radius computed in Proposition \ref{svilPhipr}. Recall that for $\rh<R_1$, $\Phi_\e(p,\rh)=I_\e(S_{p,\rh}^\e(w_\e(p,\rh)))$ and from  Proposition \ref{svilPhipr} we have the expansion:
$$\Phi_\e(p,\rh) = \frac{\pi}{5} \|S_p\|^2 \rho^4+O(\e^2) O_p(\rho ^5).$$
Recalling \eqref{def:s}, the first term can be written as $\|S_p\|^2=\e ^2 \tilde{s}_p+o(\e^2)$, so 
$$\Phi_\e(p,\rh) =  \frac{\pi}{5}\e ^2 \tilde{s}_p \rho^4+ \rho^4 o(\e^2) +O(\e^2) O_p(\rho ^5).$$
Choose $\bar{\rh}<R_1$ such that for small $\e$ the remainder $| \bar{\rho}^4 o(\e^2)+O(\bar{\rh}^5)O(\e^2)|<\e^2$  and choose $A_c>\frac{5 }{\pi}\frac{C_c+1}{\bar{\rh}^4}$. If there exists a point $\bar{p}$ such that $\tilde{s}_{\bar{p}} >A_c  $ then
$$\Phi_\e(\bar{p},\bar{\rh})>\e^2 C_c$$
so $\Phi_\e$ attains its global maximum  on $Z_c$ at an interior point $(p_{\e},\rh_{\e})$ for all $\e$ small enough and applying Lemma \ref{lemRFD} we can say that $\Sigma_{p,\rh}^\e(w_\e(p,\rh))$ is a critical point of $I_\e$ for  $\e$ small enough. 
\\ Since for  $\e\to 0$ we have $\|v_\e\|_{C^{4,\a}(S^2)} \to 0$ and $\|w_\e\|_{C^{4,\a}(S^2)} \to 0$  (see Lemma \ref{lemv} and Lemma \ref{lemw}), then the critical point $\Sigma_{p,\rh}^\e(w_\e(p,\rh))$, for small $\e$,  can be realized as normal graph on a standard sphere and it converges to a standard sphere as $\e \to 0$.
\end{pfnb}
\newline
\newline

\begin{pfnb} {\bf of Theorem \ref{teoExist2}} 
Recall \eqref{def:s} and let $\bar{p} \in \Rtre$ be a maximum point of the first term in the expansion of the squared norm of the Traceless Ricci tensor: $\tilde{s}_{\bar{p}}= M$. Observe that from Proposition \ref{svilPhipr} and from the proof of the last Theorem, for small radius $\rho$ the reduced functional $\Phi_\e(\bar{p},\rh)$ expands as
$$\Phi_\e(\bar{p},\rh)= \frac{\pi}{5}\e ^2 \tilde{s}_{\bar{p}} \rho^4+ \rho^4 o(\e^2) +O(\e^2) O_{\bar{p}}(\rho ^5).$$
Let $\bar{\rho}$ and $\e$ small enough such that the remainder $|\bar{\rho}^4 o(\e^2) + O(\e^2) O_{\bar{p}}(\bar{\rho} ^5)|<\frac{\pi}{10} M \e^2 \bar{\rho}^4$; in this way 
$$\Phi_\e(\bar{p},\bar{\rh})> \frac{\pi}{10} M \e^2 \bar{\rho}^4.$$
From the second part of Lemma \ref{lem:CR} there exist $\delta_M>0$ and $R_4>0$ such that, if $c<\delta_M$  
$$|\Phi_\e(p,\rho)|<\frac{\pi}{11} M \e^2 \bar{\rho}^4 \quad \forall (p,\rho): \rho \geq R_4.$$
(Recall that $h$ has compact support and if $\Sigma_{p,\rh}^\e(w_\e(p,\rh))$ does not intersect $supp(h)$ then $\Phi_\e(p,\rh)=0$.)

As in the proof of Theorem \ref{teoExist}, let $R>0$ be such that for $|p|\geq R$ and $\rh\leq R_4$, $S_p^\rh \cap supp(h)=\emptyset$; now we apply the Finite Dimensional Reduction to the compact subset  $Z_c \subset \Rtre \oplus \R^+$ defined as
$$Z_c:=\{(p,\rh): |p|\leq R, 0\leq \rh \leq R_4 \}.$$
If we apply Lemma \ref{lemw} to the compact $Z_c$, from the previous discussion  and from the proof of Theorem \ref{teoExist}, on the boundary $\partial Z_c$ we have:
\\- $\rh=0$: $\Phi_\e=0$.
\\- $|p|=R$: $\Phi_\e=0$.
\\-$\rh=R_4$: $|\Phi_\e(p,\rho)|<\frac{\pi}{11} M \e^2 \bar{\rho}^4$.

Observe that $(\bar{p},\bar{\rh})$ is an interior point of $\partial Z_c$ and that
$$\Phi_\e (\bar{p},\bar{\rho})>\frac{\pi}{10} M \e^2 \bar{\rho}^4>\sup _{(p,\rh)\in \partial Z_c} |\Phi_\e(p,\rho)| $$
so $\Phi_\e$ attains its global maximum  on $Z_c$ at an interior point $(p_{\e},\rh_{\e})$ for all $\e$ small enough. Applying Lemma \ref{lemRFD} we can say that $\Sigma_{p,\rh}^\e(w_\e(p,\rh))$ is a critical point of $I_\e$ for  $\e$ small enough  and we conclude as in the previous Theorem. 
\end{pfnb}

\end{document}